\let\oldvec\vec% Store \vec in \oldvec
\let\vec\oldvec% Restore \vec from \oldvec
\newtheorem{nnremark}[theorem]{\bf Remark}
\renewenvironment{remark}{\begin{nnremark} \rm }{\hfill \hspace*{1pt}\hfill $\lhd$\end{nnremark}}
\newcommand{\tm}{\times}%
\newcommand{\Uc}{\mathcal{U}}%
\newcommand{\R}{\mathbb{R}}%
\newcommand{\ccat}[3]{{#1\, \underset{#3}{\lozenge}\,{#2}}}%
\newcommand{\PD}{\mathcal{P}}%
\newcommand{\K}{\mathcal{K}}%
\newcommand{\Kinf}{\mathcal{K_\infty}}%
\newcommand{\KL}{\mathcal{KL}}%
\newcommand{\LL}{\mathcal{L}}%
\newcommand{\ep}{\varepsilon}%
\newcommand{\N}{\mathbb{N}}%
\newcommand{\PC}{\mathrm{PC}}%
\newcommand{\UGS}{\mathrm{UGS}}%
\newcommand{\Z}{\mathbb{Z}}%
\newcommand{\id}{\mathrm{id}}%
\newcommand{\dist}{\mathrm{dist}}%
\newcommand \diag  {\operatorname{diag}}
\newcounter{syscounter}
\newcommand \Iff   {\Leftrightarrow}
\newcommand\q{\enquote}
\newcommand{\inner}{\mathrm{int}}%
\newcommand{\rmd}{\mathrm{d}}%
\journalname{Mathematics of Control, Signals, and Systems}
\begin{document}

\title{Nonlinear small-gain theorems for input-to-state stability of infinite interconnections}%

\titlerunning{Nonlinear small-gain theorems for ISS of infinite interconnections}%

\author{Andrii Mironchenko  \and Christoph Kawan \and Jochen Gl\"uck }

%\authorrunning{Short form of author list} % if too long for running head

%\institute{Sergey Dashkovskiy and Andrii Mironchenko (corresponding author) \at
              %Faculty of Mathematics and Computer Science, University of Bremen, Bibliothekstra\ss e 1, 28334 Bremen, Germany \\
              %Tel.: +49-421-21863745\\
              %Fax: +49-421-2189863745\\
              %\email{ \{dsn,andmir\}@math.uni-bremen.de}           %  \\
%%             \emph{Present address:} of F. Author  %  if needed
%}

\institute{Andrii Mironchenko (corresponding author)  \at
							Faculty of Computer Science and Mathematics, University of Passau, Innstra\ss e 33, 94032 Passau, Germany\\
%              Tel.: +49-421-21863745\\
%              Fax: +49-421-2189863745\\
              \email{andrii.mironchenko@uni-passau.de}            %  \\
%             \emph{Present address:} of F. Author  %  if needed
           \and
           Christoph Kawan \at Institute of Informatics, LMU Munich, {Oettingenstra{\ss}e 67, 80538 M\"{u}nchen}, Germany\\ \email{christoph.kawan@lmu.de} 
						\and
							Jochen Glück  \at
							Faculty of Computer Science and Mathematics, University of Passau, Innstra\ss e 33, 94032 Passau, Germany\\
%              Tel.: +49-421-21863745\\
%              Fax: +49-421-2189863745\\
              \email{jochen.glueck@uni-passau.de}            %  \\
%             \emph{Present address:} of F. Author  %  if needed
}

\date{Received: date / Accepted: date}
% The correct dates will be entered by the editor

\maketitle

\begin{abstract}
We consider infinite heterogeneous networks, consisting of input-to-state stable subsystems of possibly infinite dimension. We show that the network is input-to-state stable, provided that the gain operator satisfies a certain small-gain condition. We show that for finite networks of nonlinear systems this condition is equivalent to the so-called strong small-gain condition of the gain operator
 %\sideremark{\ck{Es wird dem Leser nicht klar sein, auf welches Resultat im Paper sich die Aussage mit der strong SG condition bezieht!}}
 (and thus our results extend available results for finite networks), and for infinite networks with a linear gain operator they correspond to the condition that the spectral radius of the gain operator is less than one. We provide efficient criteria for input-to-state stability of infinite networks with linear gains, governed by linear and homogeneous gain operators, respectively.

	\keywords{
		small-gain theorem \and input-to-state stability \and infinite-dimensional systems \and nonlinear control systems 
		\and positive systems
	}
%Input-to-state stability, Lyapunov methods, nonlinear control systems, reaction-diffusion systems, linearization, semigroup theory.
% \PACS{PACS code1 \and PACS code2 \and more}
% \subclass{MSC code1 \and MSC code2 \and more}
\end{abstract}

\section{Introduction}

We {live} in a 
%\sideremark{
	%\jg{Wollen wir das Wort ``hyperconnected'' vielleicht durch etwas nüchterneres ersetzen, wie zum Beispiel ``highly connected''?}
%}
hyperconnected world, where the size of networks and the number of connections between their components are rapidly growing. Emerging technologies such as the Internet of Things, Cloud Computing, 5G communication, and so on make this trend even more distinct. Such complex networked systems include smart grids, connected vehicles, swarm robotics, and smart cities in which the participating agents may be plugged in and out from the network at any time.% 

The unknown and possibly time-varying size of such networks poses new challenges for stability analysis and control design. One of the promising approaches to this problem is to over-approximate the network by an infinite network, and perform the stability analysis and control {design} for this infinite over-approximation \cite{CIZ09,JoB05b,DaD03}. This approach has received {significant attention} during the last two decades. In particular, a large body of literature is devoted to spatially invariant systems and/or linear systems consisting of an infinite number of components, interconnected with each other by means of the same pattern, see, e.g., \cite{BPD02,BaV05,BeJ17,CIZ09}.%

%\sideremark{
	%\jg{Ich glaube, so eine lange Liste von Referenzen ist nützlicher für potentielle Leser, wenn man sie in kleinere Gruppen von Referenzen gruppiert und jeweils kurz einen halben Satz zur Erläuterung jeder Referenzen-Gruppe schreibt.}
%}
{
\textbf{State of the art: ISS theory.} ISS theory {was} initiated in \cite{Son89}, and has quickly become one of the pillars of nonlinear control theory, including robust stabilization, nonlinear observer design and analysis of large-scale networks, see \cite{KKK95,ArK01,Son08}. A tremendous progress in the development of the infinite-dimensional ISS theory has brought a number of powerful techniques for the robust stability analysis of distributed parameter systems, including Lyapunov methods \cite{DaM13,TPT17,ZhZ18}, characterizations of ISS \cite{MiW18b} for nonlinear systems, functional-analytic tools \cite{JNP18,JSZ17}, and spectral-based methods \cite{KaK16b,KaK19} for linear systems, see~\cite{MiP20} for a recent survey on this topic.  
}
%In particular, these tools allow for the analysis of \emph{infinite networks composed of nonlinear infinite-dimensional systems of different nature, which are not necessarily spatially invariant}.}%
%ISS theory {was} initiated in \cite{Son89}, and has quickly become one of the pillars of nonlinear control theory, including robust stabilization, nonlinear observer design and analysis of large-scale networks, see \cite{KKK95,ArK01,Son08}. 
The \emph{ISS small-gain approach} is especially fruitful for the analysis of coupled systems. In this method, the influence of any subsystem on other subsystems of a network is characterized by so-called gain functions. The gain operator constructed from these functions characterizes the interconnection structure of the network. The small-gain theorems for {interconnections of finitely many input-to-state stable systems governed by ordinary differential equations (ODEs)} \cite{JTP94,JMW96,DRW07,DRW10} state that if the gains are small enough, i.e., the gain operator satisfies a small-gain condition, the network is stable.%

Within the infinite-dimensional ISS theory, generalizations of these results to couplings of {finitely many} infinite-dimensional systems have been proposed in \cite{BLJ18,DaM13,Mir19b,TWJ12}. We refer to \cite{Mir19b} for more details and references on small-gain results for finite couplings. \emph{For the case of trajectory-based ISS small-gain theorems for finite networks, the main difficulties in going from finite to infinite dimensions stem from the fact that the characterizations of ISS developed for ODE systems in \cite{SoW96} are no more valid for infinite-dimensional systems. {As argued in \cite{Mir19b}, more general characterizations shown in \cite{MiW18b} have to be used, which requires major changes in the proof of the small-gain result.}}

Small-gain theorems for finite networks have been applied to the stability analysis of coupled parabolic-hyperbolic partial differential equations (PDEs) in \cite{KaK18}. Small-gain based boundary feedback design for global exponential stabilization of 1-D semilinear parabolic PDEs has been proposed in \cite{KaK19b}.%

\textbf{State of the art: infinite networks.}	On the other hand, recently a number of works appeared, devoted to stability and control of \emph{nonlinear infinite networks of ordinary differential equations, which are not necessarily spatially invariant}, see, e.g., \cite{DMS19a,DaP19,KMS19}. Small-gain analysis of infinite networks is especially challenging since the gain operator, collecting the information about the internal gains, acts on an infinite-dimensional space, in contrast to couplings of {finitely many} systems of arbitrary nature. This calls for a careful choice of the infinite-dimensional state space of the overall network, and motivates the use of the theory of positive operators on ordered Banach spaces for the small-gain analysis.%

In~\cite{DaP19}, it is shown that a countably infinite network of continuous-time input-to-state stable systems is ISS, provided that the gain functions capturing the influence of the subsystems on each other are all less than the identity, which is a very conservative condition. In~\cite{DMS19a}, it was shown that classic max-form strong small-gain conditions developed for finite networks in \cite{DRW10} do not ensure stability of infinite networks, even {in the linear case}. To address this issue, more restrictive robust strong small-gain conditions {have been} developed in~\cite{DMS19a}, but still the main results in \cite{DMS19a} have been shown under the quite strong restriction that there is a linear path of strict decay for the gain operator, which makes the result not fully nonlinear.%

%The small-gain theorems in~\cite{DaP19,DMS19a} are formulated in terms of ISS Lyapunov functions and a trajectory-based small-gain theorem for infinite networks is provided in~\cite{Mir19d}.

In contrast, for networks consisting of exponentially ISS systems, possessing exponential ISS Lyapunov functions with linear gains, it was shown in \cite{KMS19} that if the spectral radius of the gain operator is less than one, then the whole network is exponentially ISS and there is a coercive exponential ISS Lyapunov function for the whole network. This result is tight and provides a complete generalization of \cite[Prop.~3.3]{DIW11} from finite to infinite networks.%

%The small-gain theorem from \cite{KMS19} has been extended in~\cite{NMK20b} to %ISS with respect to closed sets, and applied to the stability analysis of% %infinite time-variant networks, to consensus in infinite-agent systems, as well %as to the design of distributed observers for infinite networks.%

%\subsection{Contribution}

%\sideremark{
	%\jg{
		%Da wir momentan einen Abschnitt~1.1, aber keinen Abschnitt~1.2 haben, könnte man hier die Nummerierung
		%vielleicht abschalten.
		%
		%Oder wir unterteilen die Einleitung etwas feiner; da sie sehr lang ist, könnte das die Orientierung erleichtern.
	%}
	%
	%\mir{Ich habe das etwas umgeschrieben.}
%}

\textbf{Contribution.} 
\emph{In this work, we provide nonlinear ISS small-gain theorems for continuous-time infinite networks, whose components may be infinite-dimensional systems of different types. We do not impose any linearity and/or contractivity assumption for the gains, which makes the results truly nonlinear. Moreover, we do not restrict ourselves to couplings of ODE systems, but instead develop a framework, which allows for couplings of heterogeneous infinite-dimensional systems, which is important in the context of ODE-PDE, delay-PDE and PDE-PDE cascades.} We derive our small-gain theorems for uniform global stability (UGS) and ISS properties  
in the trajectory formulation, in contrast to the papers \cite{DMS19a,DaP19,KMS19}, where {the Lyapunov formulation was used}.%

We start by introducing a general class of infinite-dimensional control systems, which includes many classes of evolution PDEs, time-delay systems, ODEs, infinite switched systems, etc. Next, we introduce the concept of infinite interconnections for systems of this class, extending the framework developed in \cite{KaJ07,Mir19b}.%

{
\emph{Theorems \ref{thm_ugs_semimax_sg} and \ref{thm_ugs_summation_sg} are our small-gain results for uniform global stability of infinite networks. They use the monotone bounded invertibility (MBI) property of the gain operator}}, which is equivalent for finite networks (see Proposition~\ref{prop:small-gain-condition-n-dim-general}) to the strong small-gain condition, employed in the small-gain analysis of finite networks in \cite[Thm.~8]{DRW07} and \cite{Mir19b}. The proof of this result is based on the proof of the corresponding result for finite networks, see \cite[Thm.~8]{DRW07}.%

{
\emph{Theorems \ref{thm_smallgain_iss_semimax} and \ref{thm_smallgain_iss_summation} are our ISS small-gain results for infinite networks}} in semi-maximum and summation formulation, which state that an infinite network consisting of ISS systems is ISS provided that the discrete-time system induced by the gain operator has the so-called \emph{monotone limit (MLIM) property}. This property concerns the input-to-state behavior of the discrete-time control system $x(k+1) \leq \Gamma(x(k)) + u(k)$ induced by the gain operator $\Gamma$ and is implied by ISS of this system for monotonically decreasing solutions and in turn implies the monotone bounded invertibility property.%

In Section~\ref{sec:Small-gain conditions}, we analyze the MBI and MLIM properties, which {are employed in} the small-gain analysis of infinite networks of ISS systems. \emph{In Section~\ref{sec:A uniform small-gain condition and the MBI property}, we characterize the MBI property in terms of the uniform small-gain condition}, which is a uniform version of the classical small-gain condition $\Gamma(x) \not\geq x$ for all $x \geq 0$.%

In Section~\ref{sec:Non-uniform small-gain conditions}, we relate the uniform small-gain condition to the \emph{strong and robust strong small-gain conditions}, which have already been exploited in the small-gain analysis of finite \cite{DRW07,DRW10} and infinite \cite{DMS19a} networks. \emph{In Section~\ref{sec:Finite-dimensional systems}, we show in Proposition~\ref{prop:small-gain-condition-n-dim-general} that the uniform and strong small-gain conditions as well as the MBI and MLIM properties are equivalent for finite-dimensional nonlinear systems, if the gain operator {is of} summation or {max}-type.} {As a consequence of Proposition~\ref{prop:small-gain-condition-n-dim-general}, we see that our results extend those of \cite{Mir19b}, and thus also the classical small-gain theorems for finitely many finite-dimensional systems from \cite{DRW07}, even with minimal regularity assumptions on the interconnection (we require well-posedness {and the} BIC property only).}

{
In Appendix~\ref{sec:Exponential ISS of discrete-time systems}, we derive a characterization of {exponential ISS (eISS)} for discrete-time systems with a generating and normal cone, induced by homogeneous of degree one and subadditive operators (Proposition~\ref{prop:eISS-criterion-homogeneous-systems}). We apply this and recent results in \cite{GlM20}, to show in \emph{Proposition~\ref{prop:eISS-criterion-linear-systems} that for linear infinite-dimensional systems with a generating and normal cone the MBI, MLIM and the uniform small-gain condition all are equivalent to the spectral small-gain condition (saying that the spectral radius of the gain operator is less than one)}.%

%In Proposition~\ref{prop:eISS-criterion-homogeneous-systems}, we derive a characterization of eISS for discrete-time systems with homogeneous operators. Propositions~\ref{prop:eISS-criterion-linear-systems} and \ref{prop:eISS-criterion-homogeneous-systems} are useful, in particular, to obtain efficient small-gain theorems for infinite networks with linear gains, see Corollaries~\ref{cor:SGT-sup-linear-gains},~\ref{cor:SGT-sum-linear-gains}.%

Finally, in Appendix~\ref{sec:Systems, governed by a max-form gain operator}, we study relations between various uniform and non-uniform small-gain conditions for max-form gain operators with nonlinear gains, which are of particular importance in small-gain theory. Following \cite{DMS19a}, we study also the properties of the strong transitive closure of the gain operator. We use these properties to show (in Proposition~\ref{prop:join-morphism-eISS-criterion}) the equivalence of the MBI property, the MLIM property, and the existence of a path of strict decay for the max-form gain operator with linear gains. The results of that section are important for the development of linear and nonlinear Lyapunov-based small-gain theorems for infinite networks.%

Propositions~\ref{prop:eISS-criterion-linear-systems}, \ref{prop:join-morphism-eISS-criterion} and \ref{prop:eISS-criterion-homogeneous-systems} are useful, in particular, to obtain efficient small-gain theorems for infinite networks with linear gains, see Corollaries~\ref{cor:SGT-sup-linear-gains},~\ref{cor:SGT-sum-linear-gains}.
}

\section{Preliminaries}

\paragraph{Notation.} 

We write $\R$ for the real numbers, $\Z$ for the integers, and $\N = \{1,2,3,\ldots\}$ for the natural numbers. $\R_+$ and $\Z_+$ denote the sets of nonnegative reals and integers, respectively.%

We use the following classes of comparison functions:%
%\sideremark{
%	\jg{
%		Auf die Formel für $\KL$ schein sich der vierte minor comment von Reviewer~1 %zu beziehen.%
%		Ich finde den Kommentar aber komisch formuliert. Ist gemeint, dass wir %``$\forall r > 0$''
%		durch ``$\forall r \ge 0$'' ersetzen sollen? 
%	}
%}
\begin{align*}
  \K &:= \left\{\gamma:\R_+ \to \R_+: \ \gamma\mbox{ is continuous and strictly increasing, }\gamma(0)=0\right\} \\
  \K_{\infty} &:= \left\{\gamma\in\K:\ \gamma\mbox{ is unbounded}\right\} \\
  \LL &:= \big\{\gamma:\R_+ \to \R_+:\ \gamma\mbox{ is continuous and strictly decreasing with}
 \lim\limits_{t\rightarrow\infty}\gamma(t)=0 \big\} \\
  \KL &:= \{\beta:\R_+^2 \rightarrow \R_+: \ \beta\mbox{ is continuous and } \\
	         &\qquad\qquad \qquad \qquad \qquad \beta(\cdot,t)\in{\K},\ \forall t \geq 0,\  \beta(r,\cdot)\in {\LL},\ \forall r > 0\}.
	%\\
	%& \qquad\qquad\qquad\qquad\qquad\qquad\qquad\qquad \beta(\cdot,t)\in{\K} \ \forall t \geq 0,\  \beta(r,\cdot)\in {\LL}\ \forall r > 0\}.%
\end{align*}
For a normed linear space $(W,\|\cdot\|_W)$ and any $r>0$, we write $B_{r,W} :=\{w \in W: \|w\|_W < r\}$ (the open ball of radius $r$ around $0$ in $W$). By $\overline{B}_{r,W}$ we denote the corresponding closed ball.  If the space $W$ is clear from the context, we simply write $B_r$ and $\overline{B}_r$, respectively. For any nonempty set $S \subset W$ and any $x \in W$, we denote the distance from $x$ to $S$ by $\dist(x,S):=\inf_{y \in S}\|x-y\|_W$.%

For a set $U$, we let $U^{\R_+}$ denote the space of all maps from $\R_+$ to $U$. {For a nonempty set $J\subset \R_+$, we denote by $\|w\|_{J}$ the sup-norm of a bounded function $w:J \rightarrow W$, i.e., $\|w\|_{J} = \sup_{s\in J}\|w(s)\|_W$.} Given a nonempty index set $I$, we write $\ell_{\infty}(I)$ for the Banach space of all functions $x:I \rightarrow \R$ with $\|x\|_{\ell_{\infty}(I)} := \sup_{i\in I}|x(i)| < \infty$. Moreover, $\ell_{\infty}(I)^+ := \{ x \in \ell_{\infty}(I) : x(i) \geq 0,\ \forall i \in I \}$. We write ${\bf 1}$ for the vector in $\ell_{\infty}(I)^+$ whose components are all equal to $1$. If $I = \N$, we simply write $\ell_{\infty}$ and $\ell_{\infty}^+$, respectively. By $e_i$, $i\in I$, we denote the $i$-th unit vector in $\ell_{\infty}(I)$.% 

Throughout the paper, all considered vector spaces are vector spaces over $\R$.%

\paragraph{Ordered vector spaces and positive operators.}

In the following, $X$ always denotes a real vector space. For two sets $A,B \subset X$, we write $A + B = \{a + b : a \in A, b \in B\}$, $-A = \{-a : a \in A\}$, and $\R_+ \cdot A = \{ r \cdot a : a \in A, r \in \R_+ \}$.%

Recall that a \emph{partial order} on a set $X$ is a relation on $X$ which is reflexive, transitive and antisymmetric. A subset $X^+ \subset X$ is called a \emph{(positive) cone} in $X$ {if (i) $X^+ \cap (-X^+) = \{0\}$, (ii) $\R_+ \cdot X^+ \subset X^+$, and (iii) $X^+ + X^+ \subset X^+$.} A cone $X^+$ introduces a partial order ``$\leq$'' on $X$ via%
\begin{equation*}
  x \leq y \quad \text{whenever} \quad y - x \in X^+.%
\end{equation*}

The pair $(X,X^+)$ is also called an \emph{ordered vector space}. If $X$ is a Banach space and the cone $X^+$ is closed, we call $(X,X^+)$ an \emph{ordered Banach space}. In this case, the cone $X^+$ is called \emph{generating} if $X^+ + (-X^+) = X$.
%\begin{itemize}
%\item \emph{total} if $\overline{X^+ + (-X^+)} = X$.%
%\item \emph{generating} if $X^+ + (-X^+) = X$.%
%\end{itemize}
Clearly, a cone $X^+$ is generating if and only if $X^+$ spans $X$. {If the cone $X^+$ is generating}, then by \cite[Thm.~2.37]{AlT07} there exists a constant $M > 0$ such that every $x \in X$ can be decomposed as
\begin{align}
	\label{eq_bounded_decomposition}
	x = y-z \qquad \text{where} \quad y,z \ge 0 \quad \text{and} \quad \|y\|_X,\|z\|_X \le M \|x\|_X.
\end{align} 

{
The norm in $X$ is called \emph{monotone} if for any $x_1,x_2 \in X$ with $0 \leq x_1 \leq x_2$ it holds that $\|x_1\|_X \leq \|x_2\|_X$.
The cone $X^+$ is called \emph{normal} if there exists $\delta>0$ so that for any $x_1,x_2 \in X$ with $0 \leq x_1 \leq x_2$ it holds that $\|x_1\|_X \leq \delta \|x_2\|_X$. 
In this case, one can always find an equivalent norm which is monotone \cite[Thm.~2.38]{AlT07}.%
}

%The norm in $X$ is called%
%\begin{itemize}
%\item \emph{semimonotone} if there exists $\delta>0$ so that for any $x_1,x_2 \in X$ with $0 \leq x_1 \leq x_2$ it holds that $\|x_1\|_X \leq \delta \|x_2\|_X$.%
%\item \emph{monotone} if for any $x_1,x_2 \in X$ with $0 \leq x_1 \leq x_2$ it holds that $\|x_1\|_X \leq \|x_2\|_X$.%
%\end{itemize}
%If the norm in $(X,X^+)$ is semimonotone, the cone $X^+$ is called \emph{normal}. In this case, one can always find an equivalent norm which is monotone \cite[Thm.~2.38]{AlT07}.%

Let $(X,X^+)$ and $(Y,Y^+)$ be ordered vector spaces. We say that a map $f:X^+ \rightarrow Y^+$ is \emph{a (nonlinear) monotone operator} if $x_1 \leq x_2$ implies $f(x_1) \leq f(x_2)$ for all $x_1,x_2 \in X^+$.%

\section{Control systems and their stability}

In this paper, we work with the following definition of a control system (which provides all the features that are necessary for a global stability analysis).%

\begin{definition}\label{def_controlsystem}
Consider a triple $\Sigma = (X,\Uc,\phi)$ consisting of the following:%
\begin{enumerate}  
\item[(i)] A normed vector space $(X,\|\cdot\|_X)$, called the \emph{state space}.%
\item[(ii)] A vector space $U$ of \emph{input values} and a normed vector \emph{space of inputs} $(\Uc,\|\cdot\|_{\Uc})$, where $\Uc$ is a linear subspace of $U^{\R_+}$. We assume that the following  axioms hold:%
\begin{itemize}
\item \emph{The axiom of shift invariance}: for all $u \in \Uc$ and all $\tau\geq0$, the time-shifted function $u(\cdot + \tau)$ belongs to $\Uc$ with \mbox{$\|u\|_\Uc \geq \|u(\cdot + \tau)\|_\Uc$}.%
\item \emph{The axiom of concatenation}: for all $u_1,u_2 \in \Uc$ and for all $t>0$ the concatenation of $u_1$ and $u_2$ at time $t$, defined by%
\begin{equation*}
	\ccat{u_1}{u_2}{t}(\tau) :=
	\begin{cases}
		u_1(\tau)	& {\text{ for all } \tau \in [0,t],} \\ 
		u_2(\tau-t)	& {\text{ for all } \tau \in (t, \infty)}
	\end{cases}
\end{equation*}
belongs to $\Uc$.%
\end{itemize}
\item[(iii)] A map $\phi:D_{\phi} \to X$, $D_{\phi}\subseteq \R_+ \tm X \tm \Uc$, called \emph{transition map}, so that for all $(x,u)\in X \tm \Uc$ it holds that $D_{\phi} \cap (\R_+ \tm \{(x,u)\}) = [0,t_m)\tm \{(x,u)\}$, for a certain $t_m = t_m(x,u)\in (0,+\infty]$. The corresponding interval $[0,t_m)$ is called the \emph{maximal domain of definition} of the mapping $t\mapsto \phi(t,x,u)$, which we call a \emph{trajectory} of the system.%		
\end{enumerate}
The triple $\Sigma$ is called a \emph{(control) system} if it satisfies the following axioms:%
\begin{enumerate}
\item[($\Sigma{1}$)]\label{axiom:Identity} \emph{The identity property:} for all $(x,u) \in X \tm \Uc$, it holds that $\phi(0,x,u) = x$.%
\item[($\Sigma{2}$)]\emph{Causality:} for all $(t,x,u) \in D_\phi$ and $\tilde{u} \in \Uc$ such that $u(s) = \tilde{u}(s)$ for all $s \in [0,t]$, it holds that $[0,t]\tm \{(x,\tilde{u})\} \subset D_\phi$ and $\phi(t,x,u) = \phi(t,x,\tilde{u})$.%
\item[($\Sigma{3}$)]\label{axiom:Continuity} \emph{Continuity:} for each $(x,u) \in X \tm \Uc$, the trajectory $t \mapsto \phi(t,x,u)$ is continuous on its maximal domain of definition.%
\item[($\Sigma{4}$)]\label{axiom:Cocycle} \emph{The cocycle property:} for all $x \in X$, $u \in \Uc$ and $t,h \geq 0$ so that $[0,t+h]\tm \{(x,u)\} \subset D_{\phi}$, we have $\phi(h,\phi(t,x,u),u(t+\cdot)) = \phi(t+h,x,u)$. 
\hfill $\lhd$
\end{enumerate}
\end{definition}

This class of systems encompasses control systems generated by ordinary differential equations, switched systems, time-delay systems,
many classes of partial differential equations, important classes of boundary control systems and many other systems.%

\begin{definition}\label{def_forward_completeness} 
We say that a control system $\Sigma = (X,\Uc,\phi)$ is \emph{forward complete} if $D_\phi = \R_+ \tm X \tm \Uc$, i.e., $\phi(t,x,u)$ is defined for all $(t,x,u) \in \R_+ \tm X \tm \Uc$.
\hfill $\lhd$
\end{definition}

An important property of ordinary differential equations with Lipschitz continuous right-hand sides is the possibility of extending a solution, which is bounded on a time interval $[0,t)$, to a larger time interval $[0,t+\ep)$. Evolution equations in Banach spaces with bounded control operators and Lipschitz continuous right-hand sides have similar properties \cite[Thm.~4.3.4]{CaH98}; the same holds for many other classes of systems \cite[Ch.~1]{KaJ11b}. The next property, adopted from \cite[Def.~1.4]{KaJ11b}, formalizes this behavior for general control systems.%

\begin{definition}\label{def_BIC} 
We say that a system $\Sigma$ satisfies the \emph{boundedness-implies-continuation (BIC) property} if for each $(x,u)\in X \tm \Uc$ such that the maximal existence time $t_m = t_m(x,u)$ is finite, for any given $M>0$ there exists $t \in [0,t_m)$ with $\|\phi(t,x,u)\|_X > M$.
\hfill $\lhd$
\end{definition}

%\subsection{Stability properties of control systems}

Next, we introduce the input-to-state stability property, which unifies the classical asymptotic stability concept with the input-output stability notion, and is one of the cornerstones of nonlinear control theory \cite{KoA01,Son08}.

\begin{definition}\label{def_ISS}
	A system $\Sigma = (X,\Uc,\phi)$ is called \emph{(uniformly) input-to-state stable (ISS)} if there exist $\beta \in \KL$ and 
	%\sideremark{
		%\jg{
			%Reviewer~1 schlägt vor, die Bedingung $\gamma \in \Kinf$ zu $\gamma \in \K$ zu ändern.
			%Ich habe jetzt $\gamma \in \Kinf \cup \{0\}$ geschrieben, was äquivalent ist,
			%aber einheitlicher zu Definition~\ref{def_subsys_iss_semimax} passt.
		%}
	%}
	{$\gamma \in \K \cup \{0\}$} 
such that 
\begin {equation*}
  \|\phi(t,x,u)\|_X \leq \beta(\|x\|_X,t) + \gamma(\|u\|_{\Uc}),\quad (t,x,u) \in D_{\phi}.
\end{equation*}
\end{definition}

Two properties, implied by ISS, will be important in the sequel:%
\begin{definition}
A system $\Sigma = (X,\Uc,\phi)$ is called \emph{uniformly globally stable (UGS)} if there exist $\sigma \in\Kinf$ and {$\gamma \in \K \cup \{0\}$} such that 
\begin{equation*}
  \|\phi(t,x,u)\|_X \leq \sigma(\|x\|_X) + \gamma(\|u\|_{\Uc}),\quad (t,x,u) \in D_{\phi}.
\end{equation*}
\end{definition}

\begin{definition}
A forward complete system $\Sigma = (X,\Uc,\phi)$ has the \emph{bounded input uniform asymptotic gain (bUAG) property} if there exists a {$\gamma \in \K \cup \{0\}$} such that for all $ \ep,r>0$ there is a time $\tau = \tau(\ep,r) \geq 0$ for which
%for all $u \in \overline{B}_{r,\Uc}$ and $x \in \overline{B}_{r,X}$ the following implication holds:%
\begin{equation*}
 \|x\|_X\leq r \ \wedge \ \|u\|_{\Uc} \leq r \ \wedge \ t \geq \tau\ \quad \Rightarrow \quad \|\phi(t,x,u)\|_X \leq \ep + \gamma(\|u\|_{\Uc}).%
\end{equation*}
\end{definition}

The UGS and bUAG properties are extensions of global Lyapunov stability and uniform global attractivity to systems with inputs.%

The following lemma provides a useful criterion for the input-to-state stability in terms of uniform global stability and the bUAG property (see \cite[Lem.~3.7]{Mir19b}). It is a special case of stronger ISS characterizations shown in \cite{MiW18b} and \cite[Sec.~6]{Mir19b}.%

{
\begin{lemma}\label{lem_UGS_and_bUAG_imply_UAG}
Let $\Sigma = (X,\Uc,\phi)$ be a control system with the BIC property. If $\Sigma$ is UGS and has the bUAG property\footnote{Note that UGS combined with the BIC property implies forward completeness, thus the bUAG property makes sense here.}, then $\Sigma$ is ISS.
\end{lemma}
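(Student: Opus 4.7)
The plan is to construct the $\KL$-estimate required for ISS by interleaving the a priori bound from UGS with the asymptotic decay from bUAG. I first note that UGS together with the BIC property forces forward completeness: if $t_m(x,u)$ were finite, BIC would produce a sequence $t_k \nearrow t_m$ with $\|\phi(t_k,x,u)\|_X \to \infty$, contradicting the bound $\|\phi(t,x,u)\|_X \le \sigma(\|x\|_X) + \gamma_1(\|u\|_\Uc)$ guaranteed by UGS. Hence $D_\phi = \R_+ \tm X \tm \Uc$, which is needed in order to apply the bUAG property meaningfully.

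Let $\sigma$ and $\gamma_1$ be from UGS and $\gamma_2$ from bUAG, and split the analysis according to the relative sizes of $\|x\|_X$ and $\|u\|_\Uc$. If $\|u\|_\Uc > \|x\|_X$, then UGS alone gives $\|\phi(t,x,u)\|_X \le \sigma(\|u\|_\Uc) + \gamma_1(\|u\|_\Uc)$ for all $t\ge 0$, which is already an ISS-type estimate with trivial $\beta$. So the substantive case is $\|u\|_\Uc \le \|x\|_X$, to which I plan to apply bUAG.

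On that set, I set $r := \|x\|_X$ and, for $n\in\N$, define $\tau_n(r) := \sup_{s\in[0,r]}\tau(2^{-n},s)$, where $\tau$ is the threshold function from bUAG; by construction $\tau_n(r)$ is finite and nondecreasing in both $n$ and $r$. The goal is then to build $\beta \in \KL$ satisfying $\beta(r,t) \ge \sigma(r)$ on $[0,\tau_1(r)]$ and $\beta(r,t) \ge 2^{-n}$ on $[\tau_n(r),\tau_{n+1}(r)]$ for all $n \ge 1$, and to verify, using the UGS bound on $[0,\tau_1(r)]$ and bUAG on each $[\tau_n(r),\tau_{n+1}(r)]$, that $\|\phi(t,x,u)\|_X \le \beta(\|x\|_X,t) + \gamma(\|u\|_\Uc)$ on $\{\|u\|_\Uc\le\|x\|_X\}$. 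Choosing $\gamma := \sigma + \gamma_1 + \gamma_2$ also majorises the easy-case gain, so that the estimate extends to all of $\R_+\tm X\tm\Uc$.

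The main technical hurdle is the $\KL$-construction itself: the thresholds $\tau_n(r)$ are a priori only piecewise monotone in $r$ and may be discontinuous, so patching the level curves directly does not yield a $\KL$-function. Smoothing them --- e.g., by convolution with a positive kernel in $r$ and linear interpolation in $t$ --- into a single function that is continuous on $\R_+\tm\R_+$, of class $\Kinf$ in $r$ for every $t$, and of class $\LL$ in $t$ for every $r>0$, is a standard but nontrivial step in the theory of comparison functions (Sontag's $\KL$-lemma). The present statement is, in fact, a special case of the ISS characterisations developed in \cite{MiW18b} (cf.\ \cite[Lem.~3.7]{Mir19b}), from which it can be extracted directly.
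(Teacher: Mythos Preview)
The paper does not actually prove this lemma: it states the result and cites \cite[Lem.~3.7]{Mir19b} and \cite{MiW18b} for the proof. Your sketch is a correct outline of the standard argument used there (forward completeness from UGS\,+\,BIC, case split on $\|u\|_\Uc \lessgtr \|x\|_X$, UGS bound for small times, bUAG for large times, then the $\KL$-construction), and you end by citing the very same references, so your treatment is fully aligned with---and in fact more detailed than---the paper's.
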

}

\section{Infinite interconnections}
\label{sec:Infinite interconnections}

In this subsection, we introduce (feedback) interconnections of an arbitrary number of control systems, indexed by some nonempty set $I$. For each $i \in I$, let $(X_i,\|\cdot\|_{X_i})$ be a normed vector space which will serve as the state space of a control system $\Sigma_i$. Before we can specify the space of inputs for $\Sigma_i$, we first have to construct the overall state space. In the following, we use the sequence notation $(x_i)_{i \in I}$ for functions with domain $I$. The overall state space is then defined as%
\begin{equation*}
 	X := \Bigl\{ (x_i)_{i \in I} 
 	{\in \prod_{i \in I} X_i : } 
 	\, \sup_{i \in I} \|x_i\|_{X_i} < \infty \Bigr\}.%
\end{equation*}
{It is a vector space with respect to pointwise addition and scalar multiplication, 
and we can turn it into a normed space in the following way:}%

\begin{proposition}
	{The state space $X$ is a normed space with respect to the norm
	\begin{equation*}
 		\|x\|_X := \sup_{i\in I}\|x_i\|_{X_i}.%
	\end{equation*}
	If all of the spaces $(X_i,\|\cdot\|_{X_i})$ are Banach spaces, then so is $(X,\|\cdot\|_X)$.}
\end{proposition}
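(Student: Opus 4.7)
The plan is to verify the four norm axioms first, and then to prove completeness by the standard argument for sup-norm spaces, namely pointwise extraction of limits followed by uniformity.

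For the norm axioms, definiteness of $\|\cdot\|_X$ on $X$ is immediate: $\|x\|_X<\infty$ holds by the very definition of the set $X$. I would note that $\|x\|_X=0$ forces $\|x_i\|_{X_i}=0$ for each $i\in I$, hence $x_i=0$ for all $i$, since each $\|\cdot\|_{X_i}$ is itself a norm. Absolute homogeneity follows by pulling the scalar out of each summand: $\|\lambda x\|_X=\sup_{i\in I}|\lambda|\|x_i\|_{X_i}=|\lambda|\|x\|_X$. The triangle inequality reduces to the componentwise triangle inequalities via $\|x_i+y_i\|_{X_i}\le \|x_i\|_{X_i}+\|y_i\|_{X_i}\le \|x\|_X+\|y\|_X$ and taking the supremum over $i$. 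It is also worth pointing out here that these estimates already show $x+y, \lambda x \in X$ whenever $x,y\in X$, so the vector space structure on $X$ is genuinely compatible with the norm.

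For the completeness claim, I would start with an arbitrary Cauchy sequence $(x^{(n)})_{n\in\N}\subset X$ and use the fundamental estimate
\begin{equation*}
 \|x_i^{(n)}-x_i^{(m)}\|_{X_i}\le \|x^{(n)}-x^{(m)}\|_X \qquad (i\in I),
\end{equation*}
so that each coordinate sequence $(x_i^{(n)})_{n\in\N}$ is Cauchy in $X_i$. Completeness of $(X_i,\|\cdot\|_{X_i})$ then yields a limit $x_i\in X_i$ for every $i$, and I set $x:=(x_i)_{i\in I}$. I would then check that $x\in X$ by exploiting that Cauchy sequences are bounded: if $C:=\sup_{n\in\N}\|x^{(n)}\|_X<\infty$, then $\|x_i\|_{X_i}=\lim_n\|x_i^{(n)}\|_{X_i}\le C$ uniformly in $i$, whence $\|x\|_X\le C$.

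Finally, to promote pointwise convergence to norm convergence, I would fix $\eps>0$, choose $N$ with $\|x^{(n)}-x^{(m)}\|_X<\eps$ for $n,m\ge N$, and apply this coordinatewise to get $\|x_i^{(n)}-x_i^{(m)}\|_{X_i}<\eps$. Letting $m\to\infty$ and using continuity of $\|\cdot\|_{X_i}$ gives $\|x_i^{(n)}-x_i\|_{X_i}\le\eps$ for every $i\in I$, so the uniform bound $\|x^{(n)}-x\|_X\le\eps$ follows. The only subtlety worth flagging is that the supremum is taken over a potentially uncountable index set $I$, but since all estimates are uniform in $i$ this causes no trouble; no routine calculation is delicate, and no obstacle of substance is expected.
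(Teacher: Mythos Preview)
Your proof is correct and is precisely the standard argument one would expect. The paper itself omits the proof entirely, stating only that it is straightforward; your write-up supplies exactly the routine verification the authors had in mind.
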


%\begin{proof}
%Let $(x^n)_{n \in \N}$ be a Cauchy sequence in $(X,\|\cdot\|_X)$. Then, for every $\ep>0$, there exists $N \in \N$ so that for all $n,m \geq N$ and $i\in I$ we have $\|x^n_i - x^m_i\|_{X_i} \leq \ep$. In particular, $(x^n_i)_{n\in\N}$ is Cauchy in $(X_i,\|\cdot\|_{X_i})$. Let $x_i \in X_i$ be its limit and put $x := (x_i)_{i \in I}$. We have%
%\begin{equation*}
%  \sup_{i \in I} \|x_i\|_{X_i} \leq \sup_{i \in I} \left(\|x_i - x_i^N\|_{X_i} + \|x_i^N\|_{X_i}\right) \leq \ep + \|x^N\|_X < \infty%
%\end{equation*}
%implying $x \in X$, and $x^n$ converges to $x$. \qed%
%\end{proof}

{The proof of the proposition is straightforward, hence we omit it.}

We also define for each $i \in I$ the normed vector space $X_{\neq i}$ by the same construction as above, but for the restricted index set $I \setminus \{i\}$. Then $X_{\neq i}$ can be identified with the closed linear subspace $\{ (x_j)_{j \in I} \in X : x_i = 0 \}$ of $X$.%

Now consider for each $i \in I$ a control system of the form%
\begin{equation*}
  \Sigma_i = (X_i,\PC_b(\R_+,X_{\neq i}) \tm \Uc,\bar{\phi}_i),%
\end{equation*}
where $\PC_b(\R_+,X_{\neq i})$ is the space of all globally bounded piecewise continuous functions $w:\R_+ \rightarrow X_{\neq i}$, with the norm $\|w\|_{\infty} = \sup_{t \geq 0}\|w(t)\|_{X_{\neq i}}$. The norm on $\PC_b(\R_+,X_{\neq i}) \tm \Uc$ is defined by%
\begin{equation}\label{eq_product_input_norm}
  \|(w,u)\|_{\PC_b(\R_+,X_{\neq i}) \tm \Uc} := \max\left\{ \|w\|_{\infty}, \|u\|_{\Uc} \right\}.%
\end{equation}
Here we assume that $\Uc \subset U^{\R_+}$ for some vector space $U$, and $\Uc$ satisfies the axioms of shift invariance and concatenation. Then, by the definition of $\PC_b(\R_+,X_{\neq i})$ and the norm \eqref{eq_product_input_norm}, these axioms are also satisfied for the product space $\PC_b(\R_+,X_{\neq i}) \tm \Uc$.%

\begin{definition}\label{def_interconnection}
Given the control systems $\Sigma_i$ ($i \in I$) as above, assume that there is a map $\phi:D_\phi \to X$, defined on $D_\phi \subset \R_+ \tm X \tm \Uc$, such that:
\begin{enumerate}
	\item[(i)] For each $x \in X$ and each $u \in \Uc$ there is $\varepsilon>0$ such that $[0,\varepsilon] \tm \{(x,u)\} \subset D_\phi$.
	\item[(ii)] Furthermore, 
% $D_\phi \cap (\R_+ \tm \{(x,u)\}) = [0,t_m(x,u)) \tm \{(x,u)\}$, where $t_m(x,u)>0$ is the   
the components $\phi_i$ of the transition map $\phi:D_{\phi} \rightarrow X$ satisfy%
\begin{equation*}
  \phi_i(t,x,u) = \bar{\phi}_i(t,x_i,(\phi_{\neq i},u)) \mbox{\quad for all\ } (t,x,u) \in D_{\phi},%
\end{equation*}
where $\phi_{\neq i}(\cdot) = (\phi_j(\cdot,x,u))_{j \in I \setminus \{i\}}$ for all $i \in I$.\footnote{By the causality axiom, we can assume that $\phi_{\neq i}$ is globally bounded, since $\bar{\phi}_i(t,x_i,(\phi_{\neq i},u))$ does not depend on the values $\phi_{\neq i}(s)$ with $s > t$, and on the compact interval $[0,t]$, $\phi_{\neq i}$ is bounded because it is continuous.}

We also assume that $\phi$ is maximal in the sense that no other map $\tilde\phi:\tilde{D}_\phi \to X$ with $\tilde{D}_\phi \supset D_\phi$ exists, which satisfies all of the above properties, and coincides with $\phi$ on $D_\phi$.%

\end{enumerate}

%\begin{enumerate}
%\item[(i)] 
%\end{enumerate}
If the map $\phi$ is unique with above properties, and if $\Sigma = (X,\Uc,\phi)$ is a control system satisfying BIC property, then $\Sigma$ is called the \emph{(feedback) interconnection} of the systems $\Sigma_i$.

We then call $X_{\neq i}$ the space of \emph{internal input values}, $\PC_b(\R_+,X_{\neq i})$ the space of \emph{internal inputs}, and $\Uc$ the space of \emph{external inputs} of the system $\Sigma_i$. Moreover, we call $\Sigma_i$ the \emph{$i$-th subsystem} of $\Sigma$.
\hfill $\lhd$
\end{definition}

%\begin{definition}\label{def_interconnection}
%Given the control systems $\Sigma_i$ ($i \in I$) as above, we call a control system of the form 
%{$\Sigma = (X,\Uc,\phi)$, where $X$ is the overall state space,}
%a \emph{(feedback) interconnection} of the systems $\Sigma_i$ if the following holds:%
%\begin{enumerate}
%\item[(i)] The components $\phi_i$ of the transition map $\phi:D_{\phi} \rightarrow X$ satisfy%
%\begin{equation*}
  %\phi_i(t,x,u) = \bar{\phi}_i(t,x_i,(\phi_{\neq i},u)) \mbox{\quad for all\ } (t,x,u) \in D_{\phi},%
%\end{equation*}
%where $\phi_{\neq i}(\cdot) = (\phi_j(\cdot,x,u))_{j \in I \setminus \{i\}}$ for all $i \in I$.\footnote{By the causality axiom, we can assume that $\phi_{\neq i}$ is globally bounded, since $\bar{\phi}_i(t,x_i,(\phi_{\neq i},u))$ does not depend on the values $\phi_{\neq i}(s)$ with $s > t$, and on the compact interval $[0,t]$, $\phi_{\neq i}$ is bounded because it is continuous.}
%\item[(ii)] $\Sigma$ has the BIC property.%
%\end{enumerate}
%We then call $X_{\neq i}$ the space of \emph{internal input values}, $\PC_b(\R_+,X_{\neq i})$ the space of \emph{internal inputs}, and $\Uc$ the space of \emph{external inputs} of the system $\Sigma_i$. Moreover, we call $\Sigma_i$ the \emph{$i$-th subsystem} of $\Sigma$.%
%\end{definition}

The stability properties introduced {above} are defined in terms of the {norm} of the whole input, and this is not suitable for the consideration of coupled systems, as we are interested not only in the collective influence of all inputs on a subsystem, but in the influence of particular subsystems on a given subsystem. The next definition provides the needed flexibility.
%
%\mir{
%\begin{verbatim}
%(4) Nach Definition 4.2, ist die feedback interconnection von Systemen
     %$\Sigma_i$ nicht ganz eindeutig, weil der Definitionsbereich der
     %transition map nicht eindeutig festgelegt ist. Später sprechen wir
     %aber öfters von "the interconnection of the systems \Sigma_i",
     %und manchmal von "an interconnection". Vielleicht sollten wir uns
     %da nochmals Gedanken dazu machen.
%\end{verbatim}
%}

\begin{definition}\label{def_subsys_iss_semimax}
Given the spaces $(X_j,\|\cdot\|_{X_j})$, $j\in I$, and the system $\Sigma_i$ for a fixed $i \in I$, we say that $\Sigma_i$ is \emph{input-to-state stable (ISS) (in semi-maximum formulation)} if $\Sigma_i$ {is} forward complete and there are $\gamma_{ij},\gamma_j \in \K \cup \{0\}$ for all $j \in I$, and $\beta_i \in \KL$ such that for all initial states $x_i \in X_i$, all internal inputs $w_{\neq i} = (w_j)_{j\in I \setminus \{i\}} \in \PC_b(\R_+,X_{\neq i})$, all external inputs $u \in \Uc$ and $t \geq 0$:%
\begin{equation*}
  \|\bar{\phi}_i(t,x_i,(w_{\neq i},u))\|_{X_i} \leq \beta_i(\|x_i\|_{X_i},t) + \sup_{j \in I}\gamma_{ij}(\|w_j\|_{[0,t]}) + \gamma_i(\|u\|_{\Uc}).%
\end{equation*}
Here we assume that the functions $\gamma_{ij}$ satisfy $\sup_{j \in I}\gamma_{ij}(r) < \infty$ for every $r \geq 0$ (implying that the sum on the 
{right-hand side} 
is finite) and $\gamma_{ii} = 0$.
\hfill $\lhd$
\end{definition}

The functions $\gamma_{ij}$ and $\gamma_i$ in this definition are called \emph{(nonlinear) gains}. 
%For notational simplicity, we allow the case $\gamma_{ij} = 0$ for $j \neq i$.%

Assuming that all systems $\Sigma_i$, $i\in I$, are ISS in semi-maximum formulation, we can define a nonlinear monotone operator $\Gamma_{\otimes}:\ell_{\infty}(I)^+ \rightarrow \ell_{\infty}(I)^+$ from the gains $\gamma_{ij}$ {by}
\begin{equation}
\label{eq:Gain-operator-semimax}
  \Gamma_{\otimes}(s) := \bigl(\sup_{j\in I}\gamma_{ij}(s_j)\bigr)_{i\in I},\quad s = (s_i)_{i\in I} \in \ell_{\infty}(I)^+.%
\end{equation}

In general, $\Gamma_{\otimes}$ is not well-defined. It is easy to see that the following assumption is equivalent to $\Gamma_{\otimes}$ being well-defined.%

{
\begin{assumption}\label{ass_gammamax_welldef}
For every $r>0$, we have $\sup_{i,j \in I}\gamma_{ij}(r) < \infty$.
%\begin{equation*}
  %\sup_{i,j \in I}\gamma_{ij}(r) < \infty.
%\end{equation*}
\end{assumption}
}

\begin{lemma}
Assumption \ref{ass_gammamax_welldef} is equivalent to the existence of $\zeta\in\Kinf$ and $a\geq0$ such that $\sup_{i,j\in I}\gamma_{ij}(r) \leq a + \zeta(r)$ for all $r\geq 0$.%
\end{lemma}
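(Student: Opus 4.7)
The plan is to show the non-trivial direction by constructing a concrete $\Kinf$ majorant of the pointwise supremum of the gains. The reverse implication is immediate: if $\sup_{i,j \in I} \gamma_{ij}(r) \leq a + \zeta(r)$ for all $r \geq 0$, then the supremum is finite at every $r \geq 0$, which is (strictly stronger than) Assumption~\ref{ass_gammamax_welldef}.

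For the forward direction, define
\[
  h(r) := \sup_{i,j \in I} \gamma_{ij}(r), \qquad r \geq 0.
\]
By Assumption~\ref{ass_gammamax_welldef}, $h$ is a well-defined function $\R_+ \to \R_+$. Since each $\gamma_{ij} \in \K \cup \{0\}$ is nondecreasing with $\gamma_{ij}(0) = 0$, the supremum $h$ inherits these properties; note, however, that $h$ need not be continuous or strictly increasing. The task is therefore to dominate $h$ pointwise by $a + \zeta(r)$ for some $a \geq 0$ and some $\zeta \in \Kinf$.

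I would pick $a := h(1)$ and construct $\zeta$ by piecewise linear interpolation through the nodes $\zeta(0) := 0$ and $\zeta(n) := h(n+1) + n$ for every integer $n \geq 1$. The sequence of nodes is strictly increasing because $h$ is nondecreasing and the summand $n$ is strict, so the resulting function is continuous and strictly increasing on $\R_+$, vanishes at $0$, and satisfies $\zeta(n) \geq n \to \infty$; hence $\zeta \in \Kinf$.

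It remains to verify $h(r) \leq a + \zeta(r)$ for all $r \geq 0$. If $r \in [0,1]$, monotonicity of $h$ yields $h(r) \leq h(1) = a \leq a + \zeta(r)$. If $r \in [n, n+1]$ with $n \geq 1$, then $h(r) \leq h(n+1) = \zeta(n) - n \leq \zeta(r) \leq a + \zeta(r)$. The only mildly delicate point is the passage from the discrete, possibly discontinuous, and merely nondecreasing data $h$ to a genuine $\Kinf$ function: the additive $n$ in the definition of $\zeta(n)$ supplies the strict monotonicity and continuous headroom required to land in $\Kinf$, while the offset $a = h(1)$ absorbs the values of $h$ on the initial interval $[0,1]$.
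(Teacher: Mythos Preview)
Your proof is correct. Both you and the paper define the pointwise supremum $h(r)=\sup_{i,j}\gamma_{ij}(r)$, observe it is nondecreasing with $h(0)=0$, and then dominate it by $a+\zeta(r)$; the difference lies in how the $\Kinf$ majorant is obtained. The paper sets $a:=\lim_{r\to 0^+}h(r)$ so that $\tilde h:=h-a$ (with $\tilde h(0):=0$) is nondecreasing and continuous at $0$, and then invokes an external result \cite[Prop.~9]{MiW19b} to bound $\tilde h$ above by some $\zeta\in\Kinf$. You instead take $a:=h(1)$ and build $\zeta$ explicitly by piecewise linear interpolation through the nodes $\zeta(n)=h(n+1)+n$, which is fully self-contained and avoids any appeal to the literature. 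Your route is slightly more elementary and constructive; the paper's route is shorter on the page but outsources the actual construction. Either way the argument is sound; the parenthetical ``strictly stronger'' in your reverse implication is a harmless overstatement, since finiteness at $r=0$ is automatic from $\gamma_{ij}(0)=0$.
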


\begin{proof}
Obviously, the implication ``$\Leftarrow$'' holds. Conversely, define $\xi: \R_+ \to \R_+$ by%
\begin{equation*}
  \xi(r) := \sup_{i,j\in I}\gamma_{ij}(r).%
\end{equation*}
As a supremum of continuous increasing functions, $\xi$ is lower semicontinuous and nondecreasing on its domain of definition. As $\xi(r)$ is finite for every $r\geq 0$ by assumption, define%
\begin{equation*}
\tilde{\xi}(r):=
\begin{cases}
0 & \mbox{if } r=0,\\
\xi(r)-a & \mbox{if } r>0,%
\end{cases}
\end{equation*}
where $a:=\lim_{r\to+0}\xi(r)\geq 0$ (the limit exists as $\xi$ is nondecreasing). By construction, $\tilde{\xi}$ is nondecreasing, continuous at $0$ and satisfies $\tilde{\xi}(0)=0$. Hence, $\tilde{\xi}$ can be upper bounded by a certain $\zeta\in \Kinf$ (this follows from a more general result in \cite[Prop.~9]{MiW19b}). Overall, $\sup_{i,j\in I} \gamma_{ij}(r) \leq a + \zeta(r)$ for all $r\geq 0$. \qed%
\end{proof}

Also observe that $\Gamma_{\otimes}$, if well-defined, is a monotone operator:%
\begin{equation*}
  s^1 \leq s^2 \quad\Rightarrow\quad \Gamma_{\otimes}(s^1) \leq \Gamma_{\otimes}(s^2) \mbox{\quad for all\ } s^1,s^2 \in \ell_{\infty}(I)^+.%
\end{equation*}

\begin{remark}\label{rem_lg_sg_operator}
If all gains $\gamma_{ij}$ are linear, then $\Gamma_{\otimes}$ satisfies the following two properties:%
\begin{itemize}
\item $\Gamma_{\otimes}$ is a homogeneous operator of degree one, i.e., $\Gamma_{\otimes}(as) = a\Gamma_{\otimes}(s)$ for all $a \geq 0$ and $s \in \ell_{\infty}(I)^+$.%
\item $\Gamma_{\otimes}$ is subadditive, i.e., $\Gamma_{\otimes}(s^1 + s^2) \leq \Gamma_{\otimes}(s^1) + \Gamma_{\otimes}(s^2)$ for all $s^1,s^2 \in \ell_{\infty}(I)^+$.%
\end{itemize}
\end{remark}

Finally, we provide a criterion for continuity of $\Gamma_{\otimes}$ (this criterion with a slightly different statement can already be found in \cite[Lem.~2.1]{DMS19a}, however, without proof).%

\begin{proposition}
Assume that the family $\{\gamma_{ij}\}_{(i,j) \in I^2}$ is pointwise equicontinuous, i.e., for every $r \in \R_+$ and $\ep>0$ there exists $\delta>0$ such that $|\gamma_{ij}(r) - \gamma_{ij}(s)| \leq \ep$ whenever $(i,j) \in I^2$ and $|r - s| \leq \delta$. Then $\Gamma_{\otimes}$ is well-defined and continuous.%
\end{proposition}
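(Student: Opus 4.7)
The plan is to establish well-definedness and continuity as separate steps, and the technical heart of both is the same auxiliary fact: pointwise equicontinuity of a family of functions on the compact interval $[0,r]$ implies uniform equicontinuity on that interval (a direct analogue of the Heine--Cantor theorem, provable by a standard compactness/covering argument). I would state this as a small internal remark or invoke it as folklore, since its proof is just: at every $s \in [0,r]$ pick $\delta_s$ from pointwise equicontinuity with tolerance $\ep/2$; the intervals $(s-\delta_s, s+\delta_s)$ cover the compact set $[0,r]$; extract a finite subcover and take the Lebesgue number as the uniform modulus.

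For well-definedness, I would fix $r > 0$ and show $\sup_{i,j \in I} \gamma_{ij}(r) < \infty$ (this is Assumption~\ref{ass_gammamax_welldef} and suffices for $\Gamma_{\otimes}$ to be well-defined). Apply the auxiliary fact on $[0,r]$ with tolerance $\ep = 1$ to get $\delta > 0$ with
\begin{equation*}
  |\gamma_{ij}(s) - \gamma_{ij}(t)| \leq 1 \quad \text{for all } (i,j)\in I^2,\ s,t \in [0,r],\ |s-t| \leq \delta.
\end{equation*}
Then partition $[0,r]$ by $0 = r_0 < r_1 < \cdots < r_n = r$ with $n := \lceil r/\delta \rceil$ and $|r_k - r_{k-1}| \leq \delta$. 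A telescoping estimate together with $\gamma_{ij}(0) = 0$ yields $\gamma_{ij}(r) \leq n$ uniformly in $(i,j)$, so the supremum is finite.

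For continuity at a point $s \in \ell_{\infty}(I)^+$, fix $\ep > 0$ and apply the auxiliary fact on the compact interval $[0,\|s\|_{\ell_{\infty}(I)} + 1]$ to obtain a uniform modulus $\delta_0 > 0$. For any $s' \in \ell_{\infty}(I)^+$ with $\|s-s'\|_{\ell_{\infty}(I)} \leq \min\{\delta_0, 1\}$, each coordinate pair $(s_j, s'_j)$ lies in $[0,\|s\|_{\ell_{\infty}(I)} + 1]$ and satisfies $|s_j - s'_j| \leq \delta_0$, so
\begin{equation*}
  |\gamma_{ij}(s_j) - \gamma_{ij}(s'_j)| \leq \ep \quad \text{for all } (i,j) \in I^2.
\end{equation*}
Using the elementary inequality $|\sup_j a_j - \sup_j b_j| \leq \sup_j |a_j - b_j|$ coordinate-wise in $i$, this gives $\|\Gamma_{\otimes}(s) - \Gamma_{\otimes}(s')\|_{\ell_{\infty}(I)} \leq \ep$, which is continuity at $s$.

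The main obstacle is simply the pointwise-to-uniform equicontinuity upgrade; everything else is bookkeeping. Once that lemma is accepted, both well-definedness (via a chain/telescoping argument to bound $\gamma_{ij}(r)$ using only $\gamma_{ij}(0) = 0$) and continuity (via coordinate-wise application on a slightly enlarged compact interval to absorb the perturbation) fall out cleanly. No use of the monotonicity or the $\K$-class structure of the $\gamma_{ij}$ beyond $\gamma_{ij}(0) = 0$ is needed for the argument above.
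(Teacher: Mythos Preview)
Your proposal is correct and follows essentially the same route as the paper: upgrade pointwise to uniform equicontinuity on compact intervals via a Heine--Cantor covering argument, then telescope from $\gamma_{ij}(0)=0$ for well-definedness and use $|\sup_j a_j-\sup_j b_j|\le\sup_j|a_j-b_j|$ for continuity. If anything, your continuity step is slightly more explicit than the paper's, since you spell out that the uniform modulus is taken on the enlarged interval $[0,\|s\|_{\ell_\infty(I)}+1]$, whereas the paper tacitly relies on the uniform equicontinuity already obtained in the first part.
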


\begin{proof}
First, we show that $\Gamma_{\otimes}$ is well-defined. Fixing some $r>0$, the family $\{\gamma_{ij}\}_{(i,j)\in I^2}$ is uniformly equicontinuous on the compact interval $[0,r]$, which follows by a compactness argument. Hence, we can find $\delta>0$ so that $|s_1 - s_2| \leq \delta$ with $s_1,s_2 \in [0,r]$ implies $|\gamma_{ij}(s_1) - \gamma_{ij}(s_2)| \leq 1$ for all $i,j$. We can assume that $\delta$ is of the form $r/n$ for an integer $n$. Then%
\begin{equation*}
  \gamma_{ij}(r) = \sum_{k=0}^{n-1} \Bigl[\gamma_{ij}\Bigl(\frac{k+1}{n}r\Bigr) - \gamma_{ij}\Bigl(\frac{k}{n}r\Bigr)\Bigr] \leq n < \infty%
\end{equation*}
for all $(i,j) \in I^2$. Hence, $\Gamma_{\otimes}$ is well-defined.

Now we prove continuity. Choose any $\varepsilon>0$, fix some $s^0 \in \ell_{\infty}(I)^+$ and let $s \in \ell_{\infty}(I)^+$ so that $\|s - s^0\|_{\ell_{\infty}(I)} \leq \delta$ for some $\delta>0$ to be determined. By the required equicontinuity, we can choose $\delta$ small enough so that $|\gamma_{ij}(s^0_j) - \gamma_{ij}(s_j)| \leq \ep$ for all $(i,j)$ as $|s^0_j - s_j| \leq \|s^0 - s\|_{\ell_{\infty}(I)} \leq \delta$. This also implies%
\begin{equation*}
  \|\Gamma_{\otimes}(s^0) - \Gamma_{\otimes}(s)\|_{\ell_{\infty}(I)} = \sup_{i \in I}\Bigl|\sup_{j \in I} \gamma_{ij}(s^0_j) - \sup_{j \in I} \gamma_{ij}(s_j)\Bigr| \leq \ep.%
\end{equation*}
{In the last inequality, we use the estimate%
\begin{equation*}
  \sup_{j \in I} \gamma_{ij}(s^0_j) - \sup_{j \in I} \gamma_{ij}(s_j) \leq \sup_{j \in I} (\gamma_{ij}(s_j) + \ep) - \sup_{j \in I} \gamma_{ij}(s_j) = \ep,%
\end{equation*}
and the analogous estimate in the other direction.} \qed
\end{proof}

Another formulation of ISS for the systems $\Sigma_i$ is as follows. In this formulation, we need to assume that $I$ is countable.%

\begin{definition}\label{def_subsys_iss_sum}
Assume that $I$ is a nonempty countable set. Given the spaces $(X_j,\|\cdot\|_{X_j})$, $j\in I$, and the system $\Sigma_i$ for a fixed $i \in I$, we say that $\Sigma_i$ is \emph{input-to-state stable (ISS) (in summation formulation)} if $\Sigma_i$ is forward complete and there are $\gamma_{ij},\gamma_j \in \K \cup \{0\}$ for all $j \in I$, and $\beta_i \in \KL$ such that for all initial states $x_i \in X_i$, all internal inputs $w_{\neq i} = (w_j)_{j\in I \setminus \{i\}} \in \PC_b(\R_+,X_{\neq i})$, all external inputs $u \in \Uc$ and $t \geq 0$:%
\begin{equation*}
  \|\bar{\phi}_i(t,x_i,(w_{\neq i},u))\|_{X_i} \leq \beta_i(\|x_i\|_{X_i},t) + \sum_{j \in I}\gamma_{ij}(\|w_j\|_{[0,t]}) + \gamma_i(\|u\|_{\Uc}).%
\end{equation*}
Here we assume that the functions $\gamma_{ij}$ are such that $\sum_{j \in I}\gamma_{ij}(r) < \infty$ for every $r \geq 0$ (implying that the sum on the 
{right-hand side} 
is finite) and $\gamma_{ii} = 0$.
\hfill $\lhd$
\end{definition}

{
\begin{remark}
\label{rem:Why-many-formulations} 
If a network has finitely many components, ISS in summation formulation, and ISS in semi-maximum formulation are equivalent concepts. Nevertheless, even for finite networks the gains in semi-maximum formulation and the gains in summation formulation are distinct, and for some systems one formulation is better than the other one in the sense that it produces tighter (and thus smaller) gains. This motivates the interest in analyzing both formulations. We illustrate this by examples in Sections~\ref{examp: a linear spatially invariant system}, \ref{examp: a nonlinear spatially invariant system}. In fact, also more general formulations of input-to-state stability for networks are studied in the literature \cite{DRW10}, using the formalism of monotone aggregation functions.
\end{remark}
}

Assuming that all systems $\Sigma_i$, $i\in I$, are ISS, we can define a nonlinear monotone operator $\Gamma_{\boxplus}:\ell_{\infty}(I)^+ \rightarrow \ell_{\infty}(I)^+$ from the gains $\gamma_{ij}$ as follows:%
\begin{equation*}
  \Gamma_{\boxplus}(s) := \Bigl(\sum_{j\in I}\gamma_{ij}(s_j)\Bigr)_{i \in I},\quad s = (s_i)_{i\in I} \in \ell_{\infty}(I)^+.%
\end{equation*}
Again, $\Gamma_{\boxplus}$ might not be well-defined, hence we need to make an appropriate assumption.%

\begin{assumption}\label{ass_gammasum_welldef}
For every $r > 0$, we have%
\begin{equation*}
  \sup_{i \in I}\sum_{j \in I} \gamma_{ij}(r) < \infty.%
\end{equation*}
\end{assumption}

\begin{remark}
Assume that all the gains $\gamma_{ij}$, $(i,j) \in I^2$, are linear functions. Then the gain operator $\Gamma_{\boxplus}$ can be regarded as a linear operator on $\ell_{\infty}(I)$ and Assumption \ref{ass_gammasum_welldef} {is equivalent to $\Gamma_{\boxplus}$ being a bounded} linear operator on $\ell_{\infty}(I)$.
\end{remark}

\begin{proposition}
Assume that the operator $\Gamma_{\boxplus}$ is well-defined. A sufficient criterion for continuity of $\Gamma_{\boxplus}$ is that each $\gamma_{ij}$ is a $C^1$-function and%
\begin{equation*}
  \sup_{i \in I} \sum_{j \in I} \sup_{0 < s \leq r} \gamma_{ij}'(s) < \infty \mbox{\quad for all\ } r > 0.%
\end{equation*}
\end{proposition}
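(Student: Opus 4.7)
The plan is to exploit the $C^1$ assumption to pass from the derivative bound in the hypothesis to a Lipschitz-type estimate on bounded sets, via the mean value theorem applied componentwise, and then to sum over $j$ and take the supremum over $i$.

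Fix $s^0 \in \ell_{\infty}(I)^+$ and set $r := \|s^0\|_{\ell_{\infty}(I)} + 1$. Take any $s \in \ell_{\infty}(I)^+$ with $\|s - s^0\|_{\ell_{\infty}(I)} \le 1$, so that $s_j, s_j^0 \in [0,r]$ for every $j \in I$. For each pair $(i,j) \in I^2$ the mean value theorem applied to $\gamma_{ij}$ on the closed interval with endpoints $s_j$ and $s_j^0$ (both lying in $[0,r]$) yields some $\xi_{ij}$ strictly between them, hence $\xi_{ij} \in (0,r]$ whenever $s_j \neq s_j^0$, such that
\begin{equation*}
  |\gamma_{ij}(s_j) - \gamma_{ij}(s_j^0)| \leq \gamma_{ij}'(\xi_{ij}) \, |s_j - s_j^0| \leq L_{ij}(r) \, |s_j - s_j^0|,
\end{equation*}
where $L_{ij}(r) := \sup_{0 < \xi \leq r} \gamma_{ij}'(\xi)$. (The case $s_j = s_j^0$ is trivial.) Denote $M(r) := \sup_{i \in I} \sum_{j \in I} L_{ij}(r)$, which is finite by hypothesis.

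Summing the above componentwise estimate over $j \in I$ gives, for every $i \in I$,
\begin{equation*}
  \bigl|(\Gamma_{\boxplus}(s))_i - (\Gamma_{\boxplus}(s^0))_i\bigr| \leq \sum_{j \in I} L_{ij}(r) \, |s_j - s_j^0| \leq \Bigl(\sum_{j \in I} L_{ij}(r)\Bigr) \|s - s^0\|_{\ell_{\infty}(I)}.
\end{equation*}
Taking the supremum over $i \in I$ on the left-hand side, we obtain
\begin{equation*}
  \|\Gamma_{\boxplus}(s) - \Gamma_{\boxplus}(s^0)\|_{\ell_{\infty}(I)} \leq M(r) \, \|s - s^0\|_{\ell_{\infty}(I)}.
\end{equation*}

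This proves that $\Gamma_{\boxplus}$ is Lipschitz continuous on the unit ball around $s^0$ in $\ell_{\infty}(I)^+$ with constant $M(r)$, and in particular continuous at $s^0$. Since $s^0$ was arbitrary, $\Gamma_{\boxplus}$ is continuous on $\ell_{\infty}(I)^+$. The only mildly delicate point is ensuring the mean value point $\xi_{ij}$ lies in $(0,r]$ so that the hypothesis (which takes the supremum over $0 < s \leq r$) is directly applicable; this is immediate from the strict inequality in the mean value theorem together with $s_j, s_j^0 \in [0,r]$, so there is no real obstacle.
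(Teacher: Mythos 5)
Your proof is correct and takes essentially the same route as the paper: use the $C^1$ bound (via the mean value theorem) to estimate $|\gamma_{ij}(s_j)-\gamma_{ij}(s^0_j)|$ by a uniform derivative bound times $|s_j - s^0_j|$, then sum over $j$ and take the supremum over $i$. Your version additionally records a local Lipschitz constant and is slightly more careful than the paper's sketch about why the mean value point lies in $(0,r]$ (which matters since the hypothesis only bounds $\gamma_{ij}'$ on $(0,r]$, not at $0$), but this is the same argument, not a different approach.
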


%%%\mir{What is the derivative at $0$? Is it a common agreement that the right derivative is understood?}

\begin{proof}
Fix $s^0 = (s^0_j)_{j\in I} \in \ell_{\infty}(I)^+$ and $\ep>0$. Let $s \in \ell_{\infty}(I)^+$ with $\|s - s^0\|_{\ell_{\infty}(I)} = \sup_{i \in I}|s_i - s^0_i| \leq \delta$ for some $\delta > 0$, to be determined later. Then%
\begin{equation*}
  \|\Gamma_{\boxplus}(s^0) - \Gamma_{\boxplus}(s)\|_{\ell_{\infty}(I)} = \sup_{i \in I}\Bigl| \sum_{j \in I} (\gamma_{ij}(s^0_j) - \gamma_{ij}(s_j)) \Bigr|.%
\end{equation*}
Using the assumption that each $\gamma_{ij}$ is a $C^1$-function and writing $s^0_{\max} := \|s^0\|_{\ell_{\infty}(I)}$, we can estimate this by%
\begin{align*}
  & \|\Gamma_{\boxplus}(s) - \Gamma_{\boxplus}(s^0)\|_{\ell_{\infty}(I)} \leq \sup_{i \in I} \sum_{j \in I} |\gamma_{ij}(s_j) - \gamma_{ij}(s^0_j)| \\
	                                   &\leq \sup_{i \in I} \sum_{j \in I} \sup_{r \in [s^0_j - \delta,s^0_j + \delta]}|\gamma_{ij}'(r)| |s_j - s^0_j|
	                                   \leq \delta \sup_{i \in I} \sum_{j \in I} \sup_{r \leq s^0_{\max}+\delta} \gamma_{ij}'(r).%
\end{align*}
By assumption, the last supremum is finite, which implies that $\delta$ can be chosen small enough so that the whole expression is smaller than $\ep$. \qed%
\end{proof}

We also need versions of UGS for the systems $\Sigma_i$.%

\begin{definition}\label{def_subsys_ugs_semimax}
Given the spaces $(X_j,\|\cdot\|_{X_j})$, $j\in I$, and the system $\Sigma_i$ for a fixed $i \in I$, we say that $\Sigma_i$ is \emph{uniformly globally stable (UGS) (in semi-maximum formulation)} if $\Sigma_i$ is forward complete and there are $\gamma_{ij},\gamma_j \in \K \cup \{0\}$ for all $j \in I$, and $\sigma_i \in \Kinf$ such that for all initial states $x_i \in X_i$, all internal inputs $w_{\neq i} = (w_j)_{j\in I \setminus \{i\}} \in \PC_b(\R_+,X_{\neq i})$, all external inputs $u \in \Uc$ and $t \geq 0$:%
\begin{equation*}
  \|\bar{\phi}_i(t,x_i,(w_{\neq i},u))\|_{X_i} \leq \sigma_i(\|x_i\|_{X_i}) + \sup_{j \in I}\gamma_{ij}(\|w_j\|_{[0,t]}) + \gamma_i(\|u\|_{\Uc}).%
\end{equation*}
Here we assume that the functions $\gamma_{ij}$ are such that $\sup_{j \in I}\gamma_{ij}(r) < \infty$ for every $r \geq 0$ (implying that the sum on the 
{right-hand side} 
is finite) and $\gamma_{ii} = 0$.%
\hfill $\lhd$\end{definition}

\begin{definition}\label{def_subsys_ugs_sum}
Let $I$ be a countable index set. Given the spaces $(X_j,\|\cdot\|_{X_j})$, $j\in I$, and the system $\Sigma_i$ for a fixed $i \in I$, we say that $\Sigma_i$ is \emph{uniformly globally stable (UGS) (in summation formulation)} if $\Sigma_i$ is forward complete and there are $\gamma_{ij},\gamma_j \in \K \cup \{0\}$ for all $j \in I$, and $\sigma_i \in \Kinf$ such that for all initial states $x_i \in X_i$, all internal inputs $w_{\neq i} = (w_j)_{j\in I \setminus \{i\}} \in \PC_b(\R_+,X_{\neq i})$, all external inputs $u \in \Uc$ and $t \geq 0$:%
\begin{equation*}
  \|\bar{\phi}_i(t,x_i,(w_{\neq i},u))\|_{X_i} \leq \sigma_i(\|x_i\|_{X_i}) + \sum_{j \in I}\gamma_{ij}(\|w_j\|_{[0,t]}) + \gamma_i(\|u\|_{\Uc}).%
\end{equation*}
Here we assume that the functions $\gamma_{ij}$ are such that $\sum_{j \in I}\gamma_{ij}(r) < \infty$ for every $r \geq 0$ (implying that the sum on the 
{right-hand side} 
is finite) and $\gamma_{ii} = 0$.%
\hfill $\lhd$\end{definition}

\section{Stability of discrete-time systems}\label{sec:Stability of discrete-time systems}

{
In this section, we study stability properties of the system%
\begin{equation}
	\label{eq_monotone_system}
 	x(k+1) \leq A(x(k)) + u(k),\quad k \in \Z_+.
\end{equation}
Here, $(X,X^+)$ is an ordered Banach space, $A: X^+ \rightarrow X^+$ is a nonlinear operator on the cone $X^+$, and the input $u$ is an element of $\ell_{\infty}(\Z_+,X^+)$, where the latter space is defined as%
\begin{equation*}
  \ell_{\infty}(\Z_+,X^+) := \{u = (u(k))_{k\in\Z_+} : u(k) \in X^+,\ \|u\|_{\infty} := \sup_{k\in\Z_+}\|u(k)\|_X < \infty\}.%
\end{equation*}
A \emph{solution} of the equation~\eqref{eq_monotone_system} is a mapping $x: \Z_+ \to X^+$ that satisfies~\eqref{eq_monotone_system}.
We call a mapping $x: \Z_+ \to X^+$ \emph{decreasing} if $x(k+1) \le x(k)$ for all $k \in \Z_+$.
%\sideremark{\ck{Wir sollten nicht von einer ``Lösung des Systems'' sprechen. Entweder: ``solution of the equation'' oder ``trajectory of the system''. Zudem: Sollen wir ``mapping'' durch ``sequence'' ersetzen?}}
}

As we will see, for the small-gain analysis of infinite interconnections, the properties of the gain operator and the discrete-time system \eqref{eq_monotone_system} induced by the gain operator, are essential. {So we now relate the stability of the system \eqref{eq_monotone_system} to the properties of the operator $A$.}%

{
\begin{definition}
The system \eqref{eq_monotone_system} has the \emph{monotone limit property (MLIM)} if there is $\xi \in \Kinf$ such that for every $\ep>0$, every constant input $u(\cdot) :\equiv w \in X^+$ and every decreasing solution $x: \Z_+ \to X^+$ of \eqref{eq_monotone_system},
there exists $N = N(\ep,u,x(\cdot)) \in \Z_+$ with%
\begin{equation*}
  \|x(N)\|_X \leq \ep + \xi(\|w\|_X).%
\end{equation*}
\end{definition}
}

\begin{definition}
Let $(X,X^+)$ be an ordered Banach space and let $A:X^+ \rightarrow X^+$ be a nonlinear operator. We say that $\id - A$ has the \emph{monotone bounded invertibility (MBI) property} if there exists $\xi \in \Kinf$ such that {for all $v,w \in X^+$ the following implication holds:}%
\begin{equation*}
  (\id - A)(v) \leq w \quad \Rightarrow \quad \|v\|_X \leq \xi(\|w\|_X).%
\end{equation*}
\end{definition}

\begin{proposition}\label{prop_mlim_implies_mbip}
Let $(X,X^+)$ be an ordered Banach space and let $A:X^+ \rightarrow X^+$ be a nonlinear operator.
If system \eqref{eq_monotone_system} has the MLIM property, then the operator $\id - A$ has the MBI property.
\end{proposition}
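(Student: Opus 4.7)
The key observation is that any fixed point condition $v \leq A(v) + w$ gives rise to a trivially decreasing (constant) solution of \eqref{eq_monotone_system} under the constant input $w$, and the MLIM property then yields the desired norm bound on $v$ directly, with the same comparison function $\xi$.

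More concretely, the plan is as follows. Let $\xi \in \Kinf$ be the function from the MLIM property, and take arbitrary $v, w \in X^+$ satisfying $(\id - A)(v) \leq w$, that is,
\begin{equation*}
  v \;\leq\; A(v) + w.
\end{equation*}
Define $x : \Z_+ \to X^+$ by $x(k) := v$ for all $k$, and set the input $u(k) :\equiv w$. Then, using the inequality just displayed,
\begin{equation*}
  x(k+1) = v \;\leq\; A(v) + w = A(x(k)) + u(k),
\end{equation*}
so $x$ is a solution of \eqref{eq_monotone_system} with constant input $w$. Moreover, $x(k+1) = x(k)$ for all $k$, so (according to the definition of ``decreasing'' in the excerpt, which permits equality) $x$ is a decreasing solution.

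Now invoke the MLIM property: for every $\ep > 0$ there exists $N = N(\ep, w, x) \in \Z_+$ with
\begin{equation*}
  \|v\|_X = \|x(N)\|_X \;\leq\; \ep + \xi(\|w\|_X).
\end{equation*}
Since $\ep > 0$ was arbitrary, this yields $\|v\|_X \leq \xi(\|w\|_X)$, which is precisely the MBI property for $\id - A$ with the same $\xi \in \Kinf$.

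There is no real obstacle here — the argument is essentially a one-line reduction, relying only on the fact that a constant sequence counts as ``decreasing'' and produces a valid (constant) solution under a constant input. The only point worth stressing in the write-up is that the same $\xi$ works for both properties, so that MBI is inherited verbatim from MLIM.
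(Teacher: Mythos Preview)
Your proof is correct and essentially identical to the paper's own argument: both observe that the constant sequence $x(\cdot)\equiv v$ with constant input $u(\cdot)\equiv w$ is a decreasing solution of \eqref{eq_monotone_system}, apply the MLIM property to get $\|v\|_X \le \ep + \xi(\|w\|_X)$ for every $\ep>0$, and let $\ep \to 0$. The only cosmetic difference is that you explicitly note the same $\xi$ works for both properties, which the paper leaves implicit.
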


\begin{proof}
Assume that $(\id - A)(v) \leq w$ for some $v,w \in X^+$. We write this as $v \leq A(v) + w$. Hence, $x(\cdot) :\equiv v$ is a constant solution of \eqref{eq_monotone_system} corresponding to the constant input sequence $u(\cdot) :\equiv w$. By the MLIM property, there exists $\xi \in \Kinf$ (independent of $v,w$) so that for every $\ep>0$ there is $N$ with%
\begin{equation*}
  \|v\|_X = \|x(N)\|_X \leq \ep + \xi(\|u\|_{\infty}) = \ep + \xi(\|w\|_X).%
\end{equation*}
Since this holds for every $\ep>0$, we obtain $\|v\|_X \leq \xi(\|w\|_X)$, which completes the proof. \qed%
\end{proof}

\emph{Whether the MBI property is strictly weaker than the MLIM property, or whether they are equivalent, is an open problem.} {In the next proposition, though, we show that they are equivalent under certain assumptions on the operator $A$ or the cone $X^+$.} Later, in Propositions~\ref{prop:eISS-criterion-linear-systems} and \ref{prop:join-morphism-eISS-criterion}, we show their equivalence for linear operators and for the gain operator $\Gamma_\otimes$ with linear gains, defined on $\ell_{\infty}(I)$.%

{The cone $X^+$ is said to have the \emph{Levi property} if every decreasing sequence in $X^+$ is norm-convergent \cite[Def.~2.44(2)]{AlT07}. Typical examples are the standard cones in $L^p$-spaces for $p \in [1,\infty)$, and the standard cone in the space $c_0$ of real-valued sequences that converge to $0$. We note in passing that if the cone $X^+$ has the Levi property, then it is normal \cite[Thm.~2.45]{AlT07}.}

{
\begin{proposition}\label{prop_compact_operators}
Let $(X,X^+)$ be an ordered Banach space with normal cone and let $A:X^+ \rightarrow X^+$ be a nonlinear, continuous and monotone operator. 
If the cone $X^+$ has the Levi property or if the operator $A$ is compact (i.e., it maps bounded sets onto precompact sets), then the following statements are equivalent:%
\begin{enumerate}
\item[(i)] System \eqref{eq_monotone_system} satisfies the MLIM property.%
\item[(ii)] The operator $\id - A$ satisfies the MBI property.%
\end{enumerate}
\end{proposition}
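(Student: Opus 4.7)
Direction (i) $\Rightarrow$ (ii) is already Proposition~\ref{prop_mlim_implies_mbip}, so the real task is to prove (ii) $\Rightarrow$ (i). I would assume that $\id - A$ has the MBI property with function $\xi \in \Kinf$, fix $w \in X^+$ and a decreasing solution $x: \Z_+ \to X^+$ of \eqref{eq_monotone_system} with constant input $u(\cdot) \equiv w$, and show that the same $\xi$ witnesses MLIM. The plan is to apply the MBI estimate not to $x(k)$ itself but to the \emph{shifted} iterate $x(k+1)$, and then let $k \to \infty$.

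The key manipulation is the following. Since $x(k+1) \leq x(k)$ and $A$ is monotone, $A(x(k+1)) \leq A(x(k))$, so
\begin{equation*}
  (\id - A)(x(k+1)) \;\leq\; \bigl[A(x(k)) - A(x(k+1))\bigr] + w \;=:\; w_k,
\end{equation*}
with $w_k \in X^+$. MBI then yields
\begin{equation*}
  \|x(k+1)\|_X \;\leq\; \xi(\|w_k\|_X) \;\leq\; \xi\bigl(\|A(x(k)) - A(x(k+1))\|_X + \|w\|_X\bigr),
\end{equation*}
by the triangle inequality and monotonicity of $\xi$. If I can show that $\{A(x(k))\}$ is norm-Cauchy, the increment term vanishes as $k \to \infty$, and continuity of $\xi$ at $\|w\|_X$ then gives $\limsup_{k\to\infty} \|x(k+1)\|_X \leq \xi(\|w\|_X)$, which is precisely the MLIM property with function $\xi$.

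It therefore remains to show that $A(x(k))$ is norm-convergent under each of the two alternative hypotheses. In the Levi case this is immediate: the decreasing sequence $\{x(k)\}$ in $X^+$ is norm-convergent by hypothesis, and continuity of $A$ transfers this to $\{A(x(k))\}$. In the compact case, normality gives that the order-bounded sequence $\{x(k)\}$ (trapped between $0$ and $x(0)$) is norm-bounded, hence $\{A(x(k))\}$ is relatively compact; it is also decreasing in $X^+$ by monotonicity of $A$. I would then invoke a standard lemma: every decreasing, relatively compact sequence in an ordered Banach space with a closed cone is norm-convergent. This is seen by a subsequence argument — any two cluster points both lie below every term of the sequence (by closedness of $X^+$) and hence below each other, so antisymmetry of the order (from $X^+ \cap (-X^+) = \{0\}$) forces uniqueness of the cluster point, and relative compactness promotes this to convergence of the whole sequence.

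The main obstacle is precisely the compact case, where convergence of the iterates $x(k)$ themselves is not available — only of their $A$-images. This is the structural reason for shifting the MBI estimate from $x(k)$ to $x(k+1)$: the extra term $A(x(k)) - A(x(k+1))$ that appears on the right-hand side is tolerable because it can be made small via the convergence of $\{A(x(k))\}$, whereas a direct application of MBI to $x(k)$ would require controlling $\|x(k) - A(x(k)) - w\|_X$, which I have no handle on. The Levi case is cleaner, but once norm-convergence of $A(x(k))$ is established, the same line of argument concludes the proof uniformly in both situations.
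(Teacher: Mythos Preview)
Your argument is correct, and it rests on the same core observation as the paper's proof: in both cases one establishes that the decreasing sequence $\{A(x(k))\}$ is norm-convergent (directly from Levi plus continuity of $A$, or from compactness plus the unique-cluster-point argument you sketch), and then combines this with MBI. The execution differs slightly. The paper passes to the limit first: it sets $y(k) := A(x(k)) + w$, shows $y(k) \to y_*$, takes the limit in $y(k+1) \le A(y(k)) + w$ to obtain $(\id - A)(y_*) \le w$, applies MBI to $y_*$, and then uses normality once more ($0 \le x(k+1) \le y(k)$) to transfer the bound back to $x(k+1)$, picking up the normality constant $\delta$ in the final estimate. You instead apply MBI to $x(k+1)$ at finite $k$, with the residual $A(x(k)) - A(x(k+1))$ absorbed into the right-hand side, and only then let $k \to \infty$; this avoids the final normality step and yields the MLIM estimate with $\xi$ itself rather than $\delta\xi$. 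Both routes are short and essentially equivalent; yours is marginally sharper in the constant, while the paper's makes the limiting equilibrium $y_*$ explicit.
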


\begin{proof}
In view of Proposition \ref{prop_mlim_implies_mbip}, it suffices to prove the implication ``(ii) $\Rightarrow$ (i)''. Hence, consider a constant input $u(\cdot) :\equiv w \in X^+$ and a decreasing sequence $x(\cdot)$ in $X^+$ such that $x(k+1) \leq A(x(k)) + w$ for all $k \in \Z_+$. As $A$ is monotone, the operator $\tilde{A}(x) := A(x) + w$, $\tilde{A}:X^+ \rightarrow X^+$, is monotone as well; if $A$ is compact, then so is $\tilde A$. Moreover,%
\begin{equation}\label{eq_newineq}
  x(k+1) \leq \tilde{A}(x(k)) \mbox{\quad for all\ } k \in \Z_+.%
\end{equation}
Now consider the sequence $y(k) := \tilde{A}(x(k))$, $k \in \Z_+$. As $x$ is decreasing, so is $y$. Next, we note that $y$ converges in norm. Indeed, if the cone has the Levi property, this is clear. If the cone does not have the Levi property, then $A$, and thus $\tilde{A}$, is compact by assumption. So $y$ has a convergent subsequence; since $y$ is decreasing and the cone is normal, it thus follows that $y$ converges itself.%

Let $y_* \in X^+$ denote the limit of the sequence $y$. Applying $\tilde{A}$ on both sides of \eqref{eq_newineq}, yields%
\begin{equation*}
  y(k+1) \leq \tilde{A}(y(k)) \mbox{\quad for all\ } k \in \Z_+.%
\end{equation*}
Taking the limit for $k \rightarrow \infty$ and using continuity of $A$ results in $y_* \leq \tilde{A}(y_*) = A(y_*) + w$. Since this can be written as $(\id - A)(y_*) \leq w$, the MBI property of $\id - A$ gives $\|y_*\| \leq \xi(\|w\|)$. As $X^+$ is a normal cone, there is $\delta>0$ such that for every $\varepsilon>0$ there is $k>0$ large enough, for which
\begin{equation*}
  \|x(k+1)\|_X \leq \delta\|\tilde{A}(x(k))\|_X \leq \ep + \delta\xi(\|w\|_X).%
\end{equation*}
This completes the proof. \qed
\end{proof}
}

\section{Small-gain theorems}\label{sec:Small-gain theorems}

\subsection{Small-gain theorems in semi-maximum formulation}

In this subsection, we prove small-gain theorems for UGS and ISS, both in semi-maximum formulation. We start with UGS.%

\begin{theorem}\label{thm_ugs_semimax_sg}{\textbf{(UGS small-gain theorem in semi-maximum formulation)}}
Let $I$ be an arbitrary nonempty index set, $(X_i,\|\cdot\|_{X_i})$, $i\in I$, normed spaces and $\Sigma_i = (X_i,\PC_b(\R_+,X_{\neq i}) \tm \Uc,\bar{\phi}_i)$ forward complete control systems. Assume that the interconnection $\Sigma = (X,\Uc,\phi)$ of the systems $\Sigma_i$ is well-defined. Furthermore, let the following assumptions be satisfied:%
\begin{enumerate}
\item[(i)] Each system $\Sigma_i$ is UGS in the sense of Definition \ref{def_subsys_ugs_semimax} with $\sigma_i \in \K$ and nonlinear gains $\gamma_{ij},\gamma_i \in \K \cup \{0\}$.%
\item[(ii)] There exist $\sigma_{\max} \in \Kinf$ and $\gamma_{\max} \in \Kinf$ so that $\sigma_i \leq \sigma_{\max}$ and $\gamma_i \leq \gamma_{\max}$, pointwise for all $i \in I$.%
\item[(iii)] Assumption \ref{ass_gammamax_welldef} is satisfied for the operator $\Gamma_{\otimes}$ defined via the gains $\gamma_{ij}$ from (i) and $\id - \Gamma_{\otimes}$ has the MBI property.%
\end{enumerate}
Then $\Sigma$ is forward complete and UGS.%
\end{theorem}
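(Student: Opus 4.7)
The plan is to derive, for each fixed pair $(x,u) \in X \times \Uc$, a uniform a priori bound on $\|\phi(t,x,u)\|_X$ on the maximal interval of existence $[0, t_m(x,u))$. Such a bound will come from combining the subsystem UGS estimates of Definition~\ref{def_subsys_ugs_semimax} into a single vector inequality driven by $\Gamma_{\otimes}$ and then inverting it via the MBI property of $\id - \Gamma_{\otimes}$. Once a $t$-independent bound is in hand, the BIC property built into the definition of an interconnection rules out finite-time blow-up, so $t_m(x,u)=+\infty$, and the same bound immediately supplies the required UGS estimate for $\Sigma$.

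Concretely, I would fix $(x,u)$ and $t < t_m(x,u)$ and introduce the vector of running suprema
\[
  y(t) := \bigl(\sup_{s \in [0,t]} \|\phi_i(s,x,u)\|_{X_i}\bigr)_{i \in I}.
\]
Continuity of the trajectory in the $\ell_\infty$-product state space $X$ and a straightforward interchange of suprema give $y(t) \in \ell_\infty(I)^+$. Applying Definition~\ref{def_subsys_ugs_semimax} component-wise with internal inputs $w_j = \phi_j(\cdot,x,u)$, using monotonicity in the time horizon, and taking $\sup_{s \in [0,t]}$ on the left yields
\[
  y_i(t) \leq \sigma_i(\|x_i\|_{X_i}) + \sup_{j \in I} \gamma_{ij}(y_j(t)) + \gamma_i(\|u\|_{\Uc}), \quad i \in I.
\]
Setting $v_x := (\sigma_i(\|x_i\|_{X_i}))_{i \in I}$ and $v_u := (\gamma_i(\|u\|_{\Uc}))_{i \in I}$, which lie in $\ell_\infty(I)^+$ by assumption (ii) with norms at most $\sigma_{\max}(\|x\|_X)$ and $\gamma_{\max}(\|u\|_{\Uc})$, the collected inequalities read
\[
  (\id - \Gamma_{\otimes})(y(t)) \leq v_x + v_u.
\]

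The MBI property supplied by assumption (iii) then produces a $\xi \in \Kinf$, independent of $t$, $x$, and $u$, with
\[
  \|y(t)\|_{\ell_\infty(I)} \leq \xi(\|v_x + v_u\|_{\ell_\infty(I)}) \leq \xi\bigl(\sigma_{\max}(\|x\|_X) + \gamma_{\max}(\|u\|_{\Uc})\bigr).
\]
Since $\|\phi(t,x,u)\|_X \leq \|y(t)\|_{\ell_\infty(I)}$, the right-hand side is a bound uniform on $[0,t_m(x,u))$; the BIC property therefore excludes $t_m(x,u) < \infty$, giving forward completeness, and the elementary bound $\xi(a+b) \leq \xi(2a) + \xi(2b)$ lets me read off the UGS estimate with $\sigma(r) := \xi(2\sigma_{\max}(r))$ and $\gamma(r) := \xi(2\gamma_{\max}(r))$.

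The main obstacle is the derivation of the vector inequality $(\id - \Gamma_{\otimes})(y(t)) \leq v_x + v_u$: one must legitimately pull the running supremum over $s \in [0,t]$ on the left past the $\gamma_{ij}$ sitting inside $\Gamma_{\otimes}$ on the right. Monotonicity of $\Gamma_{\otimes}$ together with the elementary estimate $\|w_j\|_{[0,s]} \leq y_j(t)$ for $s \leq t$ make this exchange valid, provided the order of operations is respected (enlarge each argument first, \emph{then} apply $\Gamma_{\otimes}$, and only finally take the sup in $s$ on the left). Everything beyond that step --- membership of $v_x, v_u$ in $\ell_\infty(I)^+$, the $\Kinf$-manipulation, and the concluding invocation of BIC --- is routine bookkeeping.
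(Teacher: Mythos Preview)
Your proposal is correct and follows essentially the same approach as the paper's proof: form the vector $y(t)=\vec{\phi}_{\max}(t)$ of running suprema, derive the vectorized inequality $(\id-\Gamma_\otimes)(y(t))\le v_x+v_u$ from the componentwise UGS estimates, invoke the MBI property to obtain a $t$-independent bound, split via $\xi(a+b)\le\xi(2a)+\xi(2b)$, and conclude forward completeness from BIC. The notation differs, and you spell out the supremum-exchange step a bit more explicitly, but the argument is the same.
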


\begin{proof}
Fix $(t,x,u) \in D_{\phi}$ and observe that%
\begin{equation*}
  \|\phi(t,x,u)\|_X = \sup_{i \in I} \|\phi_i(t,x,u)\|_{X_i} = \sup_{i \in I} \|\bar{\phi}_i(t,x_i,(\phi_{\neq i},u))\|_{X_i}.%
\end{equation*}
Abbreviating $\bar{\phi}_j(\cdot) = \bar{\phi}_j(\cdot,x_j,(\phi_{\neq j},u))$ and using assumption (i), we can estimate%
\begin{equation}\label{eq_ugs_firstest}
  \sup_{s \in [0,t]}\|\bar{\phi}_i(s,x_i,(\phi_{\neq i},u))\|_{X_i} \leq \sigma_i(\|x_i\|_{X_i}) + \sup_{j \in I}\gamma_{ij}(\|\bar{\phi}_j\|_{[0,t]}) + \gamma_i(\|u\|_{\Uc}).%
\end{equation}
From the inequalities (using continuity of $s \mapsto \phi(s,x,u)$)%
\begin{equation*}
  0 \leq \sup_{s\in[0,t]} \|\bar{\phi}_i(s,x_i,(\phi_{\neq i},u))\|_{X_i} \leq \sup_{s\in[0,t]} \|\phi(s,x,u)\|_X < \infty \mbox{\quad for all\ } i \in I,%
\end{equation*}
it follows that%
\begin{equation*}
  \vec{\phi}_{\max}(t) := \Bigl( \sup_{s \in [0,t]}\|\bar{\phi}_i(s,x_i,(\phi_{\neq i},u))\|_{X_i} \Bigr)_{i \in I} \in \ell_{\infty}(I)^+.%
\end{equation*}
From Assumption (ii), it follows that also the vectors $\vec{\sigma}(x) := (\sigma_i(\|x_i\|_{X_i}))_{i\in I}$ and $\vec{\gamma}(u) := ( \gamma_i(\|u\|_{\Uc}) )_{i \in I}$ are contained in $\ell_{\infty}(I)^+$. Hence, we can write the inequalities \eqref{eq_ugs_firstest} in vectorized form as%
\begin{equation*}
  (\id - \Gamma_{\otimes})(\vec{\phi}_{\max}(t)) \leq \vec{\sigma}(x) + \vec{\gamma}(u).%
\end{equation*}
By Assumption (iii), this yields for some $\xi\in\Kinf$, independent of $x,u$:%
\begin{equation*}
  \|\vec{\phi}_{\max}(t)\|_{\ell_{\infty}(I)} \leq \xi ( \|\vec{\sigma}(x) + \vec{\gamma}(u) \|_{\ell_{\infty}(I)} ) \leq \xi( \|\vec{\sigma}(x)\|_{\ell_{\infty}(I)} + \|\vec{\gamma}(u)\|_{\ell_{\infty}(I)} ).%
\end{equation*}
Since $\xi(a + b) \leq \max\{\xi(2a),\xi(2b)\} \leq \xi(2a) + \xi(2b)$ for all $a,b\geq 0$, this implies%
\begin{eqnarray*}
  \|\vec{\phi}_{\max}(t)\|_{\ell_{\infty}(I)} 
	&\leq& \xi(2\|\vec{\sigma}(x)\|_{\ell_{\infty}(I)}) + \xi(2\|\vec{\gamma}(u)\|_{\ell_{\infty}(I)})\\
	&\leq& \xi(2\sigma_{\max}(\|x\|_X)) + \xi(2\gamma_{\max}(\|u\|_{\Uc})),
\end{eqnarray*}
and we conclude that%
\begin{eqnarray*}
  \|\phi(t,x,u)\|_X \leq \|\vec{\phi}_{\max}(t)\|_{\ell_{\infty}(I)} \leq \xi(2\sigma_{\max}(\|x\|_X)) + \xi(2\gamma_{\max}(\|u\|_{\Uc})),%
\end{eqnarray*}
which is a UGS estimate with $\sigma(r) := \xi(2\sigma_{\max}(r))$, $\gamma(r) := \xi(2\gamma_{\max}(r))$ for $\Sigma$ for all $(t,x,u)\in D_\phi$. Since $\Sigma$ has the BIC property by assumption, it follows that $\Sigma$ is forward complete and UGS. \qed%
\end{proof}

Now we are in position to state the ISS small-gain theorem.%

\begin{theorem}\label{thm_smallgain_iss_semimax}\textbf{(Nonlinear ISS small-gain theorem in semi-maximum formulation)}
Let $I$ be an arbitrary nonempty index set, $(X_i,\|\cdot\|_{X_i})$, $i\in I$, normed spaces and $\Sigma_i = (X_i,\PC_b(\R_+,X_{\neq i}) \tm \Uc,\bar{\phi}_i)$ forward complete control systems. Assume that the interconnection $\Sigma = (X,\Uc,\phi)$ of the systems $\Sigma_i$ is well-defined. Furthermore, let the following assumptions be satisfied:%
\begin{enumerate}
\item[(i)] Each system $\Sigma_i$ is ISS in the sense of Definition \ref{def_subsys_iss_semimax} with $\beta_i \in \KL$ and nonlinear gains $\gamma_{ij},\gamma_i \in \K \cup \{0\}$.%
\item[(ii)] There are $\beta_{\max} \in \KL$ and $\gamma_{\max} \in \K$ so that $\beta_i \leq \beta_{\max}$ and $\gamma_i \leq \gamma_{\max}$ pointwise for all $i \in I$.% 
\item[(iii)] Assumption \ref{ass_gammamax_welldef} holds and the discrete-time system%
\begin{equation}\label{eq_sg_system}
  w(k+1) \leq \Gamma_{\otimes}(w(k)) + v(k),%
\end{equation}
with $w(\cdot),v(\cdot)$ taking values in $\ell_{\infty}(I)^+$, has the MLIM property.%
\end{enumerate}
Then $\Sigma$ is ISS.
\end{theorem}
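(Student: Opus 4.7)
The plan is to invoke Lemma \ref{lem_UGS_and_bUAG_imply_UAG}: since ISS of each $\Sigma_i$ implies UGS of $\Sigma_i$ with the same gains, and since the MLIM assumption implies the MBI property of $\id-\Gamma_{\otimes}$ by Proposition \ref{prop_mlim_implies_mbip}, Theorem \ref{thm_ugs_semimax_sg} already delivers forward completeness and UGS of $\Sigma$, together with a uniform bound $R=R(r)$ on $\|\phi(t,x,u)\|_X$ whenever $\|x\|_X,\|u\|_{\Uc}\le r$. It therefore remains to establish the bUAG property.

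Fix such a pair $(x,u)$ and introduce the decreasing tail vector
\begin{equation*}
	a(\tau) := \bigl(\sup_{t \ge \tau}\|\phi_i(t,x,u)\|_{X_i}\bigr)_{i \in I} \in \ell_{\infty}(I)^+.
\end{equation*}
Restarting each subsystem at time $\tau$ via the cocycle property and inserting the ISS estimate of $\Sigma_i$ (using shift invariance of $\|\cdot\|_{\Uc}$) yields, for every $\tau,T\ge 0$,
\begin{equation*}
	a(\tau+T) \le \Gamma_{\otimes}(a(\tau)) + \beta_{\max}(R,T)\,{\bf 1} + \vec{\gamma}(u), \qquad \vec{\gamma}(u)_i := \gamma_i(\|u\|_{\Uc}).
\end{equation*}
Sampling at step $T>0$, the sequence $w(k):=a(kT)$ is a decreasing solution of $w(k+1)\le \Gamma_{\otimes}(w(k))+v$ with the constant input $v:=\beta_{\max}(R,T)\,{\bf 1}+\vec{\gamma}(u)$ of norm at most $\beta_{\max}(R,T)+\gamma_{\max}(\|u\|_{\Uc})$. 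The MLIM property now supplies, for each $\varepsilon'>0$, some $N$ with $\|w(N)\|_{\ell_{\infty}(I)}\le \varepsilon'+\xi(\|v\|_{\ell_{\infty}(I)})$; together with $\xi(a+b)\le \xi(2a)+\xi(2b)$, the choice $\varepsilon':=\varepsilon/2$ and $T=T(r,\varepsilon)$ large enough so that $\xi(2\beta_{\max}(R,T))\le \varepsilon/2$ produces $\|\phi(t,x,u)\|_X\le \varepsilon+\gamma(\|u\|_{\Uc})$ for all $t\ge NT$, with $\gamma(s):=\xi(2\gamma_{\max}(s))$.

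The key remaining difficulty is that the integer $N$ returned by MLIM a priori depends on the specific decreasing solution $w$, whereas bUAG demands that $\tau:=NT$ be uniform over $\|x\|_X,\|u\|_{\Uc}\le r$. I plan to resolve this by comparison in the cone $\ell_{\infty}(I)^+$: construct a single decreasing dominating sequence $\tilde{w}$ depending only on $r$ and $T$, for example $\tilde{w}(0):=R\,{\bf 1}$ and $\tilde{w}(k+1):=\min\{\tilde{w}(k),\,\Gamma_{\otimes}(\tilde{w}(k))+\bar{v}\}$ with $\bar{v}:=(\beta_{\max}(R,T)+\gamma_{\max}(r))\,{\bf 1}$, and verify inductively (using monotonicity of $\Gamma_{\otimes}$) that $w(k)\le \tilde{w}(k)$ for every admissible $(x,u)$. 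MLIM applied to $\tilde{w}$ then furnishes a uniform $\bar{N}=\bar{N}(r,T,\varepsilon)$, and a final bootstrapping step---restarting the trajectory at time $\bar{N}T$ via the cocycle property and iterating the sampling argument with the refined, smaller state-side bound---replaces $\gamma_{\max}(r)$ in the gain by $\gamma_{\max}(\|u\|_{\Uc})$. Combining the resulting bUAG estimate with UGS and Lemma \ref{lem_UGS_and_bUAG_imply_UAG} then delivers ISS of $\Sigma$.
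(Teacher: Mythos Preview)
Your overall architecture---reduce to UGS via Theorem~\ref{thm_ugs_semimax_sg} and then verify bUAG so that Lemma~\ref{lem_UGS_and_bUAG_imply_UAG} applies---is exactly the paper's plan, and your derivation of the recursive estimate $a(\tau+T)\le \Gamma_{\otimes}(a(\tau))+\beta_{\max}(R,T)\,{\bf 1}+\vec\gamma(u)$ is correct. The domination argument via $\tilde w(k+1)=\min\{\tilde w(k),\Gamma_\otimes(\tilde w(k))+\bar v\}$ is also sound: induction with monotonicity of $\Gamma_\otimes$ and $v\le\bar v$ does give $w(k)\le\tilde w(k)$, and MLIM applied to $\tilde w$ returns a time $\bar N$ that is uniform in $(x,u)$.

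The gap is the final ``bootstrapping'' sentence. Your uniform step delivers, for $t\ge\bar N T$, only the bound $\|\phi(t,x,u)\|_X\le \varepsilon'+\xi(\|\bar v\|)$ with $\|\bar v\|=\beta_{\max}(R,T)+\gamma_{\max}(r)$, i.e.\ a gain in $r$, not in $\|u\|_{\Uc}$. Restarting at time $\bar N T$ with the new state bound $R_1$ does not help: to keep the next dominating sequence $(x,u)$-independent you must again use $\gamma_{\max}(r)$ in $\bar v$, so you never trade $r$ for $\|u\|_{\Uc}$; and if you insert $\gamma_{\max}(\|u\|_{\Uc})$ into $\bar v$ then the MLIM time once more depends on $u$. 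No finite iteration of this kind produces a time $\tau(\varepsilon,r)$ together with the asymptotic gain $\gamma(\|u\|_{\Uc})$ that bUAG demands.

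The paper resolves precisely this tension by a dyadic stratification of the input size: it defines, for each $k\in\N$, the slab $B(r,k)=\overline B_{r,X}\times\{u:\|u\|_{\Uc}\in[2^{-k}r,2^{-k+1}r]\}$ and takes the supremum over $(x,u)\in B(r,k)$ \emph{before} forming the discrete sequence. This yields a decreasing MLIM solution with constant input $\varepsilon{\bf 1}+\vec\gamma(2^{-k+1}r)$ and a time $\tilde\tau(\varepsilon,r,k)$. One then chooses $k_0=k_0(\varepsilon,r)$ so that $\xi(2\gamma_{\max}(2^{1-k_0}r))\le\varepsilon$, takes the maximum of $\tilde\tau$ over the finitely many bins $k\le k_0$, and handles the remaining small inputs $\|u\|_{\Uc}\le 2^{1-k_0}r$ by a single additional MLIM application in which the gain term is already $\le\varepsilon$. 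That finite case split is the missing idea in your proposal; your domination trick could substitute for the paper's sup-over-$(x,u)$ within each bin, but some stratification by $\|u\|_{\Uc}$ is unavoidable.
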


\begin{proof}
We show that $\Sigma$ is UGS and satisfies the bUAG property, which implies ISS by Lemma \ref{lem_UGS_and_bUAG_imply_UAG}.%

{\bf UGS}. This follows from Theorem \ref{thm_ugs_semimax_sg}. Indeed, the assumptions (i) and (ii) of Theorem \ref{thm_ugs_semimax_sg} are satisfied with $\sigma_i(r) := \beta_i(r,0) \in \K$ and the gains $\gamma_{ij},\gamma_i$ from the ISS estimates for $\Sigma_i$, $i\in I$. From Proposition \ref{prop_mlim_implies_mbip} and Assumption (iii) of this theorem, it follows that Assumption (iii) of Theorem \ref{thm_ugs_semimax_sg} is satisfied. Hence, $\Sigma$ is forward complete and UGS.%

{\bf bUAG}. As $\Sigma$ is the interconnection of the systems $\Sigma_i$ and since $\Sigma$ is forward complete, we have $\phi_i(t,x,u) = \bar{\phi}_i(t,x_i,(\phi_{\neq i},u))$ for all $(t,x,u) \in \R_+ \tm X \tm \Uc$ and $i \in I$, with the notation from Definition \ref{def_interconnection}.%

Pick any $r > 0$, any $u \in \overline{B}_{r,\Uc}$ and $x \in \overline{B}_{r,X}$. As $\Sigma$ is UGS, there are $\sigma^{\UGS},\gamma^{\UGS} \in \Kinf$ so that%
\begin{equation*}
  \|\phi(t,x,u)\|_X \leq \sigma^{\UGS}(r) + \gamma^{\UGS}(r) =: \mu(r) \mbox{\quad for all\ } t \geq 0.%
\end{equation*}
In view of the cocycle property, for all $i \in I$ and $t,\tau \geq 0$ we have%
\begin{align*}
  \phi_i(t + \tau,x,u) &= \bar{\phi}_i(t+\tau,x_i,(\phi_{\neq i},u)) \\
	                     &= \bar{\phi}_i(\tau,\bar{\phi}_i(t,x_i,(\phi_{\neq i},u)),(\phi_{\neq i}(\cdot+t),u(\cdot+t))).%
\end{align*}
Given $\ep>0$, choose $\tau^* = \tau^*(\ep,r) \geq 0$ such that $\beta_{\max}(\mu(r),\tau^*) \leq \ep$. Then%
\begin{align}\label{eq_firstest}
\begin{split}
  x \in \overline{B}_{r,X} &\wedge u \in \overline{B}_{r,\Uc} \wedge \tau \geq \tau^* \wedge t \geq 0 \\
	\Rightarrow &\|\phi_i(t+\tau,x,u)\|_{X_i} \leq \beta_i(\|\bar{\phi}_i(t,x_i,(\phi_{\neq i},u))\|_{X_i},\tau) \\
	                                 &\qquad + \sup_{j \in I} \gamma_{ij}( \|\phi_j\|_{[t,t+\tau]} ) + \gamma_i(\|u(\cdot+t)\|_{\Uc}) \\
																	&\leq \beta_{\max}(\|\phi(t,x,u)\|_X,\tau^*) + \sup_{j \in I}\gamma_{ij}(\|\phi_j\|_{[t,\infty)}) + \gamma_i(\|u\|_{\Uc}) \\
																	&\leq \ep + \sup_{j \in I}\gamma_{ij}(\|\phi_j\|_{[t,\infty)}) + \gamma_i(\|u\|_{\Uc}).%
\end{split}
\end{align}
Now pick any $k \in \N$ and write%
\begin{equation*}
  B(r,k) := \overline{B}_{r,X} \tm \{ u \in \Uc : \|u\|_{\Uc} \in [2^{-k}r,2^{-k+1}r]\}.%
\end{equation*} 
Then, taking the supremum in the above inequality over all $(x,u) \in B(r,k)$, we obtain for all $i \in I$ and all $t \geq 0$ that%
\begin{equation*}
  \sup_{(x,u) \in B(r,k)}\|\phi_i(t+\tau^*,x,u)\|_{X_i} \leq \ep + \sup_{j \in I} \gamma_{ij}\Bigl( \sup_{(x,u) \in B(r,k)} \|\phi_j\|_{[t,\infty)} \Bigr) + \gamma_i(2^{-k+1}r).%
\end{equation*}
This implies for all $t \geq 0$ that%
\begin{align*}
  &\sup_{s \geq t + \tau^*}\sup_{(x,u) \in B(r,k)} \|\phi_i(s,x,u)\|_{X_i} \\
	&\quad \leq \ep + \sup_{j \in I} \gamma_{ij}\Bigl(\sup_{s \geq t} \sup_{(x,u) \in B(r,k)} \|\phi_j(s,x,u)\|_{X_j}\Bigr) + \gamma_i(2^{-k+1}r).%
\end{align*}
Now we define%
\begin{equation*}
  w_i(t,r,k) := \sup_{s \geq t} \sup_{(x,u) \in B(r,k)} \|\phi_i(s,x,u)\|_{X_i}%
\end{equation*}
and note that $w_i(t,r,k) \in [0,\mu(r)]$ for all $i \in I$ and $t \geq 0$. With this notation, we can rewrite the preceding inequality as%
\begin{equation*}
  w_i(t + \tau^*,r,k) \leq \ep + \sup_{j \in I} \gamma_{ij}(w_i(t,r,k)) + \gamma_i(2^{-k+1}r).%
\end{equation*}
Using vector notation $\vec{w}(t,r,k) := (w_i(t,r,k))_{i\in I}$ and $\vec{\gamma}(r) := (\gamma_i(r))_{i \in I}$, this can be written as%
\begin{equation*}
  \vec{w}(t+\tau^*,r,k) \leq \Gamma_{\otimes}(\vec{w}(t,r,k)) + \ep {\bf 1} + \vec{\gamma}(2^{-k+1}r).%
\end{equation*}
Observe that $\vec{w}(t,r,k) \in \ell_{\infty}(I)^+$, as the entries of the vector are uniformly bounded by $\mu(r)$, and $\vec{w}(t_2,r,k) \leq \vec{w}(t_1,r,k)$ for $t_2 \geq t_1$. Hence, $\vec{w}(l) := \vec{w}(l\tau^*,r,k)$, $l \in \Z_+$, is a monotone solution of \eqref{eq_sg_system} for the constant input $v(\cdot) \equiv \ep {\bf 1} + \vec{\gamma}(2^{-k+1}r)$. By assumption (iii) of the theorem, this implies the existence of a time $\tilde{\tau} = \tilde{\tau}(\ep,r,k)$ and a $\Kinf$-function $\xi$ such that%
\begin{align*}
  \|\vec{w}(\tilde{\tau},r,k)\|_{\ell_{\infty}(I)} &\leq \ep + \xi(\|\ep {\bf 1} + \vec{\gamma}(2^{-k+1}r)\|_{\ell_{\infty}(I)}) \\
	                                              &\leq \ep + \xi(\|\ep{\bf 1}\|_{\ell_{\infty}(I)} + \|\vec{\gamma}(2^{-k+1}r)\|_{\ell_{\infty}(I)}) \\
																								&\leq \ep + \xi(\ep + \gamma_{\max}(2^{-k+1}r)) \\
																								&\leq \ep + \xi(2\ep) + \xi(2\gamma_{\max}(2^{-k+1}r)).%
\end{align*}
By definition, this implies%
\begin{align*}
  &i \in I \wedge (x,u) \in B(r,k) \wedge t \geq \tilde{\tau}(\ep,r,k) \\
	&\Rightarrow \|\phi_i(t,x,u)\|_{X_i} \leq \ep + \xi(2\ep) + \xi(2\gamma_{\max}(2^{-k+1}r)).%
\end{align*}
Now define $k_0 = k_0(\ep,r)$ as the minimal $k \geq 1$ so that $\xi(2\gamma_{\max}(2^{1-k}r)) \leq \ep$ and let%
\begin{equation*}
  \hat{\tau}(\ep,r) := \max\{ \tilde{\tau}(\ep,r,k) : 1 \leq k \leq k_0(\ep,r) \}.%
\end{equation*}
Pick any $0 \neq u \in \overline{B}_{r,\Uc}$. Then there is $k \in \N$ with $\|u\|_{\Uc} \in (2^{-k}r,2^{-k+1}r]$. If $k \leq k_0$ (large input), then for $t \geq \hat{\tau}(\ep,r)$ we have%
\begin{align}\label{eq_349}
\begin{split}
  \|\phi(t,x,u)\|_X &\leq \ep + \xi(2\ep) + \xi(\gamma_{\max}(2^{-k+1}r)) \\
	&\leq \ep + \xi(2\ep) + \xi(2\gamma_{\max}(2\|u\|_{\Uc})).%
\end{split}
\end{align}
It remains to consider the case when $k > k_0$ (small input). For any $q \in [0,r]$, one can take the supremum in \eqref{eq_firstest} over $x \in \overline{B}_{r,X}$ and $u \in \overline{B}_{q,\Uc}$ to obtain%
\begin{align*}
  &\sup_{(x,u) \in \overline{B}_{r,X} \tm \overline{B}_{q,\Uc}}\|\phi_i(t+\tau,x,u)\|_{X_i} \\
	&\qquad \leq \ep + \sup_{j \in I}\gamma_{ij}\Bigl(\sup_{(x,u) \in \overline{B}_{r,X} \tm \overline{B}_{q,\Uc}}\|\phi_j\|_{[t,\infty)}\Bigr) + \gamma_i(q).%
\end{align*}
With $z_i(t,r,q) := \sup_{s \geq t}\sup_{(x,u) \in \overline{B}_{r,X} \tm \overline{B}_{q,\Uc}}\|\phi_i(s,x,u)\|_{X_i}$, analogous steps as above lead to the following: for every $\ep>0$, $r>0$ and $q \in [0,r]$ there is a time $\bar{\tau} = \bar{\tau}(\ep,r,q)$ such that%
\begin{align*}
  (x,u) \in \overline{B}_{r,X} \tm \overline{B}_{q,\Uc} \wedge t \geq \bar{\tau} \quad \Rightarrow \quad \|\phi(t,x,u)\|_X \leq \ep + \xi(2\ep) + \xi(2\gamma_{\max}(q)).%
\end{align*}
In particular, for $q_0 := 2^{-k_0(\ep,r)+1}$, we have%
\begin{equation}\label{eq_352}
  (x,u) \in \overline{B}_{r,X} \tm \overline{B}_{q_0,\Uc} \wedge t \geq \bar{\tau} \quad \Rightarrow \quad \|\phi(t,x,u)\|_X \leq 2\ep + \xi(2\ep),%
\end{equation}
since $\xi(2\gamma_{\max}(q_0)) = \xi(2\gamma_{\max}(2^{-k_0(\ep,r)+1})) \leq \ep$ by definition of $k_0$. Define $\tau(\ep,r) := \max\{\hat{\tau}(\ep,r),\bar{\tau}(\ep,r,q_0)\}$. Combining \eqref{eq_349} and \eqref{eq_352}, we obtain%
\begin{align*}
  &(x,u) \in \overline{B}_{r,X} \tm \overline{B}_{r,\Uc} \wedge t \geq \tau(\ep,r) \\
	&\qquad \Rightarrow \quad \|\phi(t,x,u)\|_X \leq 2\ep + \xi(2\ep) + \xi(2\gamma_{\max}(2\|u\|_{\Uc})).%
\end{align*}
As $r \mapsto \xi(2\gamma_{\max}(2r))$ is a $\Kinf$-function, we have proved that $\Sigma$ has the bUAG property which completes the proof. \qed%
\end{proof}

For finite networks, Theorem~\ref{thm_smallgain_iss_semimax} was shown in \cite{Mir19b}. However, in the proof of the infinite-dimensional case there are essential novelties, which are due to the fact that the trajectories of an infinite number of subsystems do not necessarily have a uniform speed of convergence. This resulted also in a strengthening of the employed small-gain condition.

In the special case when all interconnection gains $\gamma_{ij}$ are linear, the small-gain condition in our theorem can be formulated more directly in terms of the gains, as the following corollary shows.%

\begin{corollary}
\label{cor:SGT-sup-linear-gains}{\textbf{(Linear ISS small-gain theorem in semi-maximum formulation)}}
Given an interconnection $(\Sigma,\Uc,\phi)$ of systems $\Sigma_i$ as in Theorem \ref{thm_smallgain_iss_semimax}, additionally to the assumptions (i) and (ii) of this theorem, assume that all gains $\gamma_{ij}$ are linear functions (and hence can be identified with nonnegative real numbers), $\Gamma_{\otimes}$ is well-defined and the following condition holds:%
\begin{equation}\label{eq_sg_hsr_cond}
  \lim_{n \rightarrow \infty} \Bigl( \sup_{j_1,\ldots,j_{n+1}\in I} \gamma_{j_1j_2} \cdots \gamma_{j_{n}j_{n+1}}\Bigr)^{1/n} < 1.%
\end{equation}
Then $\Sigma$ is ISS.
\end{corollary}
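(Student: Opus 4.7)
The plan is to apply Theorem~\ref{thm_smallgain_iss_semimax}. Assumptions (i) and (ii) of that theorem are imported directly from the corollary's hypotheses, so the only thing I have to verify is assumption (iii): that the discrete-time inequality $w(k+1) \leq \Gamma_{\otimes}(w(k)) + v(k)$ on $\ell_{\infty}(I)^+$ has the MLIM property.

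First I would write the iterates of $\Gamma_{\otimes}$ explicitly. Since each $\gamma_{ij}$ is identified with a nonnegative real, a direct induction gives
\begin{equation*}
  [\Gamma_{\otimes}^n(s)]_{j_1} \;=\; \sup_{j_2,\ldots,j_{n+1} \in I} \gamma_{j_1 j_2}\, \gamma_{j_2 j_3} \cdots \gamma_{j_n j_{n+1}}\, s_{j_{n+1}}, \qquad s \in \ell_{\infty}(I)^+ .
\end{equation*}
Setting $c_n := \sup_{j_1,\ldots,j_{n+1} \in I} \gamma_{j_1 j_2} \cdots \gamma_{j_n j_{n+1}}$, this yields $\|\Gamma_{\otimes}^n(s)\|_{\ell_{\infty}(I)} \leq c_n \|s\|_{\ell_{\infty}(I)}$. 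Hypothesis~\eqref{eq_sg_hsr_cond} is exactly the statement $\limsup_{n\to\infty} c_n^{1/n} < 1$, so I can pick $\rho \in (0,1)$ and $M \geq 1$ with $c_n \leq M \rho^n$ for every $n \in \N$.

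Second, I would iterate the discrete-time inequality. For a constant input $v(\cdot) \equiv v \in \ell_{\infty}(I)^+$ and any solution $w(\cdot)$, the monotonicity and subadditivity of $\Gamma_{\otimes}$ noted in Remark~\ref{rem_lg_sg_operator} give by induction
\begin{equation*}
  w(k) \;\leq\; \Gamma_{\otimes}^k(w(0)) \,+\, \sum_{i=0}^{k-1} \Gamma_{\otimes}^{\,i}(v).
\end{equation*}
Using $v \leq \|v\|_{\ell_{\infty}(I)}\, \mathbf{1}$, monotonicity, and homogeneity, each summand is bounded componentwise by $c_i \|v\|_{\ell_{\infty}(I)}\, \mathbf{1}$, so
\begin{equation*}
  \|w(k)\|_{\ell_{\infty}(I)} \;\leq\; M \rho^k \|w(0)\|_{\ell_{\infty}(I)} \,+\, \frac{M}{1-\rho}\, \|v\|_{\ell_{\infty}(I)}.
\end{equation*}
Given $\ep > 0$, choose $N$ so large that $M \rho^N \|w(0)\|_{\ell_{\infty}(I)} \leq \ep$; this verifies the MLIM property with $\xi(r) := \tfrac{M}{1-\rho}\, r \in \Kinf$ (the choice of $\xi$ being independent of $w(\cdot)$ and $v$). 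Note that the estimate even holds for arbitrary, not just decreasing, solutions.

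Having verified assumption~(iii) of Theorem~\ref{thm_smallgain_iss_semimax}, I invoke that theorem to conclude that $\Sigma$ is ISS. The only real subtlety in this argument is the iteration step: neither monotonicity nor subadditivity alone suffices, but both properties together (which, crucially, are available precisely because the gains are linear, cf.\ Remark~\ref{rem_lg_sg_operator}) close the induction and deliver the geometric-series bound. Alternatively, the same MLIM conclusion can be read off from Proposition~\ref{prop:join-morphism-eISS-criterion} in the appendix, where the equivalence between \eqref{eq_sg_hsr_cond}, the MBI property and the MLIM property is established for max-form gain operators with linear gains.
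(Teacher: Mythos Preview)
Your proof is correct and follows essentially the same route as the paper: verify assumption~(iii) of Theorem~\ref{thm_smallgain_iss_semimax} by showing that the spectral-type condition~\eqref{eq_sg_hsr_cond} forces the discrete-time system to be exponentially ISS, hence MLIM. The only difference is in execution: the paper invokes Proposition~\ref{prop:eISS-criterion-homogeneous-systems} together with Remark~\ref{rem_homogen_spectral_radius} to obtain eISS abstractly (via a Lyapunov construction), whereas you carry out the iteration by hand, exploiting monotonicity, homogeneity and subadditivity of $\Gamma_{\otimes}$ directly to get the geometric bound $\|w(k)\|\le M\rho^k\|w(0)\|+\tfrac{M}{1-\rho}\|v\|$. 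Both arguments are equivalent in spirit; yours is slightly more self-contained for this specific operator, while the paper's is packaged so as to apply to any homogeneous subadditive operator on an ordered Banach space with generating normal cone.
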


\begin{proof}
We only need to show that Assumption (iii) of Theorem \ref{thm_smallgain_iss_semimax} is implied by \eqref{eq_sg_hsr_cond}. The linearity of the gains $\gamma_{ij}$ implies that the operator $\Gamma_{\otimes}$ is homogeneous of degree one and subadditive, see Remark \ref{rem_lg_sg_operator}. Then Proposition \ref{prop:eISS-criterion-homogeneous-systems} and Remark \ref{rem_homogen_spectral_radius} together show that  \eqref{eq_sg_hsr_cond} implies that the system%
\begin{equation*}
  w(k+1) \leq \Gamma_{\otimes}(w(k)) + v(k)%
\end{equation*}
is eISS (according to Definition~\ref{def:eISS-discrete-time-inequalities}), which easily implies the MLIM property for this system. \qed%
\end{proof}

\subsection{Small-gain theorems in summation formulation}\label{sec:Small-gain theorems in summation formulation}

Now we formulate the small-gain theorems for UGS and ISS in summation formulation.%

\begin{theorem}\label{thm_ugs_summation_sg}{\textbf{(UGS small-gain theorem in summation formulation)}}
Let $I$ be a countable index set, $(X_i,\|\cdot\|_{X_i})$, $i\in I$, be normed spaces and $\Sigma_i = (X_i,\PC_b(\R_+,X_{\neq i}) \tm \Uc,\bar{\phi}_i)$, $i\in I$ be forward complete control systems. Assume that the interconnection $\Sigma = (X,\Uc,\phi)$ of the systems $\Sigma_i$ is well-defined. Furthermore, let the following assumptions be satisfied:%
\begin{enumerate}
\item[(i)] Each system $\Sigma_i$ is UGS in the sense of Definition \ref{def_subsys_ugs_sum} (summation formulation) with $\sigma_i \in \K$ and nonlinear gains $\gamma_{ij},\gamma_i \in \K \cup \{0\}$.%
\item[(ii)] There exist $\sigma_{\max} \in \Kinf$ and $\gamma_{\max} \in \Kinf$ so that $\sigma_i \leq \sigma_{\max}$ and $\gamma_i \leq \gamma_{\max}$, pointwise for all $i \in I$.%
\item[(iii)]  Assumption \ref{ass_gammasum_welldef}  is satisfied for the operator $\Gamma_{\boxplus}$ defined via the gains $\gamma_{ij}$ from (i) and $\id - \Gamma_{\boxplus}$ has the MBI property.%
\end{enumerate}
Then $\Sigma$ is forward complete and UGS.%
\end{theorem}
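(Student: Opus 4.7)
The proof will closely mirror that of Theorem~\ref{thm_ugs_semimax_sg}, with $\Gamma_{\otimes}$ replaced by $\Gamma_{\boxplus}$ and suprema over $j \in I$ replaced by sums over $j \in I$. First, I would fix $(t,x,u) \in D_{\phi}$ and write $\bar{\phi}_j(\cdot) := \bar{\phi}_j(\cdot, x_j, (\phi_{\neq j}, u)) = \phi_j(\cdot, x, u)$. By continuity of $s \mapsto \phi(s,x,u)$ on the compact interval $[0,t]$, the quantity $C := \sup_{s \in [0,t]}\|\phi(s,x,u)\|_X$ is finite, so $\|\bar{\phi}_j\|_{[0,t]} \leq C$ uniformly in $j$. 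Assumption~\ref{ass_gammasum_welldef} then guarantees that $\sum_{j\in I}\gamma_{ij}(\|\bar{\phi}_j\|_{[0,t]}) \leq \sum_{j\in I}\gamma_{ij}(C)$ is finite, uniformly in $i \in I$. Applying the subsystem UGS estimate from Definition~\ref{def_subsys_ugs_sum} and taking the supremum over $s \in [0,t]$ on the left yields
\begin{equation*}
\sup_{s\in[0,t]}\|\bar{\phi}_i(s)\|_{X_i} \leq \sigma_i(\|x_i\|_{X_i}) + \sum_{j\in I}\gamma_{ij}(\|\bar{\phi}_j\|_{[0,t]}) + \gamma_i(\|u\|_{\Uc})
\end{equation*}
for every $i \in I$.

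Next, I would collect these into a single vector inequality. Defining $\vec{\phi}_{\max}(t) := \bigl(\sup_{s\in[0,t]}\|\bar{\phi}_i(s)\|_{X_i}\bigr)_{i\in I}$, one has $\vec{\phi}_{\max}(t) \in \ell_{\infty}(I)^+$ (its components are bounded by $C$), and $\|\bar{\phi}_j\|_{[0,t]} = (\vec{\phi}_{\max}(t))_j$. Assumption~(ii) ensures that $\vec{\sigma}(x) := (\sigma_i(\|x_i\|_{X_i}))_{i\in I}$ and $\vec{\gamma}(u) := (\gamma_i(\|u\|_{\Uc}))_{i\in I}$ also lie in $\ell_{\infty}(I)^+$. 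The scalar inequalities then combine into
\begin{equation*}
(\id - \Gamma_{\boxplus})(\vec{\phi}_{\max}(t)) \leq \vec{\sigma}(x) + \vec{\gamma}(u).
\end{equation*}

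Then I would invoke the MBI property of $\id - \Gamma_{\boxplus}$ from Assumption~(iii) to obtain $\xi \in \Kinf$, independent of $(x,u)$, such that $\|\vec{\phi}_{\max}(t)\|_{\ell_{\infty}(I)} \leq \xi(\|\vec{\sigma}(x)\|_{\ell_{\infty}(I)} + \|\vec{\gamma}(u)\|_{\ell_{\infty}(I)})$. Using $\xi(a+b)\leq \xi(2a)+\xi(2b)$ together with $\sigma_i \leq \sigma_{\max}$ and $\gamma_i \leq \gamma_{\max}$, one arrives at
\begin{equation*}
\|\phi(t,x,u)\|_X \leq \|\vec{\phi}_{\max}(t)\|_{\ell_{\infty}(I)} \leq \xi(2\sigma_{\max}(\|x\|_X)) + \xi(2\gamma_{\max}(\|u\|_{\Uc})),
\end{equation*}
which is a UGS estimate on the maximal domain $D_{\phi}$. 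Because this bound stays finite on every bounded set of initial data and inputs, the BIC property rules out blow-up in finite time and yields forward completeness of $\Sigma$; the estimate then extends to all of $\R_+ \tm X \tm \Uc$, giving UGS.

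The only step requiring any additional care relative to the semi-maximum version is verifying that the vectorized inequality makes literal sense, i.e., that $\Gamma_{\boxplus}(\vec{\phi}_{\max}(t))$ is a well-defined element of $\ell_{\infty}(I)^+$; this is exactly where Assumption~\ref{ass_gammasum_welldef}, combined with the uniform bound $C$ coming from continuity of the trajectory, is used. Beyond that, the argument is a direct transcription of the proof of Theorem~\ref{thm_ugs_semimax_sg}, and I do not expect any essential new obstacle.
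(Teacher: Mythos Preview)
Your proposal is correct and follows exactly the approach the paper takes: the paper's proof consists of the single sentence that the argument is identical to that of Theorem~\ref{thm_ugs_semimax_sg} with $\Gamma_{\boxplus}$ in place of $\Gamma_{\otimes}$. You have faithfully spelled out that transcription, including the one point (well-definedness of $\Gamma_{\boxplus}(\vec{\phi}_{\max}(t))$ via Assumption~\ref{ass_gammasum_welldef}) that deserves a word of justification.
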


\begin{proof}
The proof is exactly the same as for Theorem \ref{thm_ugs_semimax_sg}, with the operator $\Gamma_{\boxplus}$ in place of $\Gamma_{\otimes}$. \qed%
\end{proof}

\begin{theorem}\label{thm_smallgain_iss_summation}{\textbf{(Nonlinear ISS small-gain theorem in summation formulation)}}
Let $I$ be a countable index set, $(X_i,\|\cdot\|_{X_i})$, $i\in I$ be normed spaces and $\Sigma_i = (X_i,\PC_b(\R_+,X_{\neq i}) \tm \Uc,\bar{\phi}_i)$, $i\in I$ be forward complete control systems. Assume that the interconnection $\Sigma = (X,\Uc,\phi)$ of the systems $\Sigma_i$ is well-defined. Furthermore, let the following assumptions be satisfied:%
\begin{enumerate}
\item[(i)] Each system $\Sigma_i$ is ISS in the sense of Definition \ref{def_subsys_iss_sum} with $\beta_i \in \KL$ and nonlinear gains $\gamma_{ij},\gamma_i \in \K \cup \{0\}$.%
\item[(ii)] There are $\beta_{\max} \in \KL$ and $\gamma_{\max} \in \K$ so that $\beta_i \leq \beta_{\max}$ and $\gamma_i \leq \gamma_{\max}$, pointwise for all $i \in I$.% 
\item[(iii)] Assumption \eqref{ass_gammasum_welldef} holds and the discrete-time system%
\begin{equation}
\label{eq:Gamma-boxplus-discrete-time}
  w(k+1) \leq \Gamma_{\boxplus}(w(k)) + v(k),%
\end{equation}
with $w(\cdot),v(\cdot)$ taking values in $\ell_{\infty}(I)^+$, has the MLIM property.%
\end{enumerate}
Then $\Sigma$ is ISS.
\end{theorem}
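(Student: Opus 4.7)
The plan is to mirror the proof of Theorem~\ref{thm_smallgain_iss_semimax} almost verbatim, replacing the semi-maximum gain operator $\Gamma_{\otimes}$ with the summation gain operator $\Gamma_{\boxplus}$, and the internal supremum $\sup_{j\in I}\gamma_{ij}(\cdot)$ with the series $\sum_{j\in I}\gamma_{ij}(\cdot)$. As in that proof, I will establish UGS and bUAG separately and then invoke Lemma~\ref{lem_UGS_and_bUAG_imply_UAG} to conclude ISS. Since $\Gamma_{\boxplus}$ is monotone in exactly the same sense as $\Gamma_{\otimes}$, all monotonicity-based manipulations transfer without change.

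First I would prove UGS by applying Theorem~\ref{thm_ugs_summation_sg}. Assumptions (i) and (ii) of that theorem follow from assumptions (i) and (ii) of the present theorem by setting $\sigma_i(r) := \beta_i(r,0) \in \K$ and $\sigma_{\max}(r) := \beta_{\max}(r,0)$. Assumption (iii) of Theorem~\ref{thm_ugs_summation_sg} follows from Assumption~(iii) of the present theorem via Proposition~\ref{prop_mlim_implies_mbip}, since the MLIM property of the discrete-time system \eqref{eq:Gamma-boxplus-discrete-time} implies the MBI property of $\id - \Gamma_{\boxplus}$. Thus $\Sigma$ is forward complete and UGS, providing in particular a bound $\|\phi(t,x,u)\|_X \leq \mu(r)$ for all $t\geq 0$ whenever $x\in\overline{B}_{r,X}$ and $u\in\overline{B}_{r,\Uc}$, with $\mu \in \Kinf$.

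Next I would establish bUAG. Using the cocycle property and the ISS estimate from Definition~\ref{def_subsys_iss_sum} for $\Sigma_i$, I would obtain, for any $\ep > 0$ and $\tau^* = \tau^*(\ep,r) \geq 0$ chosen so that $\beta_{\max}(\mu(r),\tau^*) \leq \ep$, an inequality of the form
\begin{equation*}
  \|\phi_i(t+\tau^*,x,u)\|_{X_i} \leq \ep + \sum_{j \in I}\gamma_{ij}(\|\phi_j\|_{[t,\infty)}) + \gamma_i(\|u\|_{\Uc}).
\end{equation*}
I would then introduce the dyadic shells $B(r,k)$ and the quantities $w_i(t,r,k) := \sup_{s\geq t}\sup_{(x,u)\in B(r,k)} \|\phi_i(s,x,u)\|_{X_i}$ exactly as before, obtaining
\begin{equation*}
  \vec{w}(t+\tau^*,r,k) \leq \Gamma_{\boxplus}(\vec{w}(t,r,k)) + \ep \mathbf{1} + \vec{\gamma}(2^{-k+1}r),
\end{equation*}
where $\vec{w}(t,r,k) \in \ell_{\infty}(I)^+$ since all entries are bounded by $\mu(r)$. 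Sampling at multiples of $\tau^*$ yields a decreasing solution of the discrete-time system \eqref{eq:Gamma-boxplus-discrete-time} with constant input $\ep\mathbf{1} + \vec{\gamma}(2^{-k+1}r)$, to which Assumption~(iii) (MLIM) applies and gives a $\Kinf$-function $\xi$ such that $\|\vec{w}(\tilde\tau,r,k)\|_{\ell_\infty(I)}$ is bounded by $\ep + \xi(2\ep) + \xi(2\gamma_{\max}(2^{-k+1}r))$ for a suitable time $\tilde\tau = \tilde\tau(\ep,r,k)$.

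Finally, I would split into the large-input and small-input cases exactly as in the semi-maximum proof: for $\|u\|_\Uc \in (2^{-k}r, 2^{-k+1}r]$ with $k \leq k_0(\ep,r)$ (where $k_0$ is the minimal $k$ making $\xi(2\gamma_{\max}(2^{1-k}r))\leq \ep$), the bound above directly gives a bUAG-type estimate; for $k > k_0$, one reruns the argument with the stronger bound $\|u\|_\Uc \leq q_0 := 2^{-k_0+1}r$, obtaining uniform smallness. Combining the two cases and setting $\tau(\ep,r) := \max\{\hat\tau(\ep,r), \bar\tau(\ep,r,q_0)\}$ yields the bUAG property with asymptotic gain $r \mapsto \xi(2\gamma_{\max}(2r)) \in \Kinf$, and Lemma~\ref{lem_UGS_and_bUAG_imply_UAG} then concludes ISS. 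I do not anticipate any genuinely new obstacle: the main thing to verify carefully is that $\vec{w}(t,r,k) \in \ell_\infty(I)^+$ so that $\Gamma_{\boxplus}$ is applicable, and that the constant input $\ep \mathbf{1} + \vec{\gamma}(2^{-k+1}r) \in \ell_\infty(I)^+$ (which follows from Assumption~(ii)); everything else is a faithful transcription of the semi-maximum argument.
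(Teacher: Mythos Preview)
Your proposal is correct and follows essentially the same approach as the paper, which explicitly states that the proof is ``almost completely the same'' as that of Theorem~\ref{thm_smallgain_iss_semimax}. The only point the paper singles out---and which you gloss over---is that when passing from $\|\phi_i(t+\tau^*,x,u)\|_{X_i} \leq \ep + \sum_{j\in I}\gamma_{ij}(\|\phi_j\|_{[t,\infty)}) + \gamma_i(\|u\|_\Uc)$ to the vectorized inequality for $\vec w(t+\tau^*,r,k)$, one must pull the outer suprema $\sup_{s\geq t+\tau^*}\sup_{(x,u)}$ inside the sum, using $\sup_{s}\sum_{j}(\cdot) \leq \sum_{j}\sup_{s}(\cdot)$ rather than the trivial interchange of two suprema used in the semi-maximum case; this is the single genuine (but harmless) asymmetry between the two proofs.
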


{
\begin{proof}
The proof is almost completely the same as for Theorem \ref{thm_smallgain_iss_semimax}. The only difference is that instead of interchanging the order of two suprema $\sup_{s \geq t}$ and $\sup_{j \in I}$, we now have to use the estimate
$\sup_{s \geq t} \sum_{j \in I} \ldots \leq \sum_{j \in I} \sup_{s \geq t} \ldots$,
%\begin{equation*}
  %\sup_{s \geq t} \sum_{j \in I} \ldots \leq \sum_{j \in I} \sup_{s \geq t} \ldots%
%\end{equation*}
which is trivially satisfied. \qed%
\end{proof}
}

Again, we formulate a corollary for the case when all gains $\gamma_{ij}$ are linear.%

\begin{corollary}\label{cor:SGT-sum-linear-gains}{\textbf{(Linear ISS small-gain theorem in summation formulation)}}
Given an interconnection $(\Sigma,\Uc,\phi)$ of systems $\Sigma_i$ as in Theorem \ref{thm_smallgain_iss_summation}, additionally to the assumptions (i) and (ii) of this theorem, assume that all gains $\gamma_{ij}$ are linear functions (and hence can be identified with nonnegative real numbers), the linear operator $\Gamma_{\boxplus}$ is well-defined (thus bounded) and satisfies the spectral radius condition $r(\Gamma_{\boxplus}) < 1$. Then $\Sigma$ is ISS.
\end{corollary}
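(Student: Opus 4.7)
The plan is to reduce the corollary to an application of Theorem~\ref{thm_smallgain_iss_summation}. Assumptions (i) and (ii) of that theorem are already imposed in the statement, so the only task is to verify assumption (iii), namely that the discrete-time system~\eqref{eq:Gamma-boxplus-discrete-time} on $\ell_\infty(I)^+$ has the MLIM property.

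First, I would observe that the linearity of all gains $\gamma_{ij}$ together with the well-definedness (equivalently, boundedness) of $\Gamma_{\boxplus}$ means that $\Gamma_{\boxplus}$ extends to a bounded, positive linear operator on $\ell_\infty(I)$ leaving the cone $\ell_\infty(I)^+$ invariant. In the ordered Banach space $(\ell_\infty(I),\ell_\infty(I)^+)$, the positive cone is generating (every $x$ decomposes as $x = x^+ - x^-$ with $\|x^{\pm}\|_{\ell_\infty(I)} \le \|x\|_{\ell_\infty(I)}$) and normal (the sup-norm is monotone on the cone), so the hypotheses of Proposition~\ref{prop:eISS-criterion-linear-systems} are in force.

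The second step is the actual invocation of Proposition~\ref{prop:eISS-criterion-linear-systems}: for a bounded positive linear operator on an ordered Banach space with generating and normal cone, the spectral small-gain condition $r(\Gamma_{\boxplus}) < 1$ is equivalent to the MLIM property of the induced discrete-time inequality $w(k+1) \leq \Gamma_{\boxplus}(w(k)) + v(k)$. Since $r(\Gamma_{\boxplus}) < 1$ by hypothesis, MLIM holds. Assumption~(iii) of Theorem~\ref{thm_smallgain_iss_summation} is therefore satisfied, and ISS of $\Sigma$ follows. An equally valid alternative route, mirroring the strategy used in the proof of Corollary~\ref{cor:SGT-sup-linear-gains}, is to note that a linear map is in particular homogeneous of degree one and subadditive (in fact additive), and then appeal to Proposition~\ref{prop:eISS-criterion-homogeneous-systems} together with Remark~\ref{rem_homogen_spectral_radius}.

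The proof is essentially a one-line application once the cone-theoretic prerequisites are in place; there is no real obstacle, since all verifications (well-definedness of $\Gamma_{\boxplus}$ as a bounded operator, normality and generating property of $\ell_\infty(I)^+$) are routine. The substance of the result is borne entirely by the deeper Proposition~\ref{prop:eISS-criterion-linear-systems} on which we rely.
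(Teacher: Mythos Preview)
Your proposal is correct and follows essentially the same route as the paper: verify that $r(\Gamma_{\boxplus})<1$ implies the MLIM property via Proposition~\ref{prop:eISS-criterion-linear-systems}, and then invoke Theorem~\ref{thm_smallgain_iss_summation}. The paper's proof is even terser, but the logical content is identical; your additional remarks on the cone structure of $\ell_\infty(I)^+$ and the alternative route through Proposition~\ref{prop:eISS-criterion-homogeneous-systems} are accurate and harmless elaborations.
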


\begin{proof}
By Proposition \ref{prop:eISS-criterion-linear-systems}, $r(\Gamma_{\boxplus}) < 1$ is equivalent to the MLIM property of the system
\eqref{eq:Gamma-boxplus-discrete-time}, hence to Assumption (iii) of Theorem \ref{thm_smallgain_iss_summation}. \qed%
\end{proof}

\subsection{Example: a linear spatially invariant system}\label{examp: a linear spatially invariant system}

Let us analyze the stability of a spatially invariant infinite network
\begin{equation}\label{eq:linear-spatially-invariant-system}
  \dot{x}_i = ax_{i-1} - x_i + b x_{i+1} + u,\quad i\in\Z,%
\end{equation}
where $a,b>0$ and each $\Sigma_i$ is a scalar system with the state $x_i \in\R$, internal inputs $x_{i-1}$, $x_{i+1}$ and an external input $u$, belonging to the input space $\Uc:=L_\infty(\R_+,\R)$.

Following the general approach in Section \ref{sec:Infinite interconnections}, we define the state space for the interconnection of 
$(\Sigma_i)_{i\in\Z}$ as $X:=\ell_\infty(\Z)$.
Similarly as for finite-dimensional ODEs, it is possible to introduce the concept of a mild (Carath\'eodory) solution for the equation 
\eqref{eq:linear-spatially-invariant-system}, for which we refer, e.g., to \cite{KMS19}. As \eqref{eq:linear-spatially-invariant-system} is linear, it is easy to see that for each initial condition $x_0 \in X$ and for each input $u \in\Uc$ the corresponding mild solution is unique and exists on $\R_+$. We denote it by $\phi(\cdot,x_0,u)$. One can easily check that the triple $\Sigma:=(X,\Uc,\phi)$ defines a well-posed and forward complete interconnection in the sense of this paper.%

Having a well-posed control system $\Sigma$, we proceed to its stability analysis.%

\begin{proposition}\label{prop:Stability-linear-systems} 
The coupled system \eqref{eq:linear-spatially-invariant-system} is ISS if and only if $a+b<1$.
\end{proposition}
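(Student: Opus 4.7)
The plan is to treat the two directions of the equivalence separately, applying the linear summation-form small-gain result (Corollary~\ref{cor:SGT-sum-linear-gains}) for the sufficient direction and exploiting the spatial invariance of \eqref{eq:linear-spatially-invariant-system} for the necessary direction.

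For the sufficient direction, I would first verify that every subsystem $\dot{x}_i = -x_i + (a x_{i-1} + b x_{i+1} + u)$ is ISS in summation formulation. A routine variation-of-constants estimate gives
\[
|x_i(t)| \leq e^{-t}|x_i(0)| + a\|x_{i-1}\|_{[0,t]} + b\|x_{i+1}\|_{[0,t]} + \|u\|_{\Uc},
\]
so we may take $\beta_i(r,t) = e^{-t}r$, internal gains $\gamma_{i,i-1}(r) = ar$ and $\gamma_{i,i+1}(r) = br$ (all other $\gamma_{ij}$ equal to $0$), and external gain $\gamma_i(r) = r$. These estimates are uniform in $i \in \Z$, so Assumptions (i) and (ii) of Corollary~\ref{cor:SGT-sum-linear-gains} hold with $\beta_{\max}(r,t) = e^{-t}r$ and $\gamma_{\max}(r) = r$. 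Since $\sup_{i} \sum_{j} \gamma_{ij}(r) = (a+b)r < \infty$, the operator $\Gamma_{\boxplus}$ is well-defined on $\ell_{\infty}(\Z)^+$ and extends to a bounded linear operator on $\ell_{\infty}(\Z)$, namely $\Gamma_{\boxplus} = aS + bS^{-1}$, where $S$ is the bilateral left shift $(Sf)_i = f_{i-1}$.

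To check the spectral radius, observe that $S$ and $S^{-1}$ are isometries on $\ell_{\infty}(\Z)$, whence $\|\Gamma_{\boxplus}^n\| \leq (a+b)^n$. Conversely, $\Gamma_{\boxplus}\mathbf{1} = (a+b)\mathbf{1}$, so $\Gamma_{\boxplus}^n \mathbf{1} = (a+b)^n \mathbf{1}$ and hence $\|\Gamma_{\boxplus}^n\| \geq (a+b)^n$. Gelfand's formula then yields $r(\Gamma_{\boxplus}) = a+b$. Under the hypothesis $a+b<1$, Corollary~\ref{cor:SGT-sum-linear-gains} applies and delivers ISS of $\Sigma$. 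For the necessary direction, I would exploit the spatial symmetry of \eqref{eq:linear-spatially-invariant-system}: if the initial state is $x_0 = c\mathbf{1}$ and the input $u$ is a scalar function (identical at every site), then by uniqueness of the mild solution $\phi_i(t,x_0,u) = y(t)$ for every $i \in \Z$, where $\dot{y} = (a+b-1)y + u(t)$, $y(0) = c$. Two cases then suffice: if $a+b > 1$, taking $x_0 = \mathbf{1}$ and $u \equiv 0$ gives $\|\phi(t,\mathbf{1},0)\|_X = e^{(a+b-1)t} \to \infty$, contradicting the $\beta$-term of any ISS estimate; if $a+b = 1$, taking $x_0 = 0$ and $u \equiv 1$ (so $\|u\|_{\Uc} = 1$) gives $\|\phi(t,0,u)\|_X = t$, which cannot be bounded by $\gamma(\|u\|_{\Uc})$.

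The only nontrivial step is the spectral-radius computation on $\ell_{\infty}(\Z)$, where spectral theory is more delicate than in reflexive spaces; however, the explicit eigenvector $\mathbf{1}$ together with the operator-norm bound $\|\Gamma_{\boxplus}\| \leq a+b$ pin down $r(\Gamma_{\boxplus})$ without any further machinery. Everything else reduces to a direct application of Corollary~\ref{cor:SGT-sum-linear-gains} and elementary one-dimensional ODE reasoning.
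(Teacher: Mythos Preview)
Your proposal is correct and follows essentially the same route as the paper: both directions use variation of constants to obtain linear gains and invoke Corollary~\ref{cor:SGT-sum-linear-gains} for sufficiency, and exploit the constant-profile solution $\phi(t,c\mathbf{1},u)$ reducing to a scalar ODE for necessity. The only minor differences are that the paper does not split the case $a+b\ge 1$ (it simply notes that $t\mapsto e^{(a+b-1)t}x^*\mathbf{1}$ fails to decay to zero, which already contradicts the $\KL$-bound even when $a+b=1$), and it stops at $\|\Gamma_{\boxplus}\|=a+b<1$ rather than computing $r(\Gamma_{\boxplus})$ exactly, since only the upper bound is needed.
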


\begin{proof}
\q{$\Rightarrow$}: For any $a,b>0$, the function $y: t \mapsto (\mathrm{e}^{(a+b-1)t}x^*)_{i\in\Z}$ is a solution of \eqref{eq:linear-spatially-invariant-system} subject to an initial condition $(x^*)_{i\in\Z}$ and input $u\equiv 0$. This shows that $a+b \geq 1$ implies that the system \eqref{eq:linear-spatially-invariant-system} is not ISS.%

\q{$\Leftarrow$}: By variation of constants, we see that for any $i\in\Z$, treating $x_{i-1}, x_{i+1}$ as external inputs from $L_\infty(\R_+,\R)$, we have the following ISS estimate for the $x_i$-subsystem:%
\begin{align*}
  |x_i(t)| &= \Big|\mathrm{e}^{-t}x_i(0) + \int_0^t \mathrm{e}^{s-t}[a x_{i-1}(s) + b x_{i+1}(s) + u(s)] \rmd s\Big|\\
  			   &\leq \mathrm{e}^{-t}|x_i(0)| + a \|x_{i-1}\|_\infty  + b \|x_{i+1}\|_\infty + \|u\|_\infty,
\end{align*}
for any $t\geq 0$, $x_i(0)\in\R$ and all $x_{i-1}, x_{i+1},u \in L_\infty(\R_+,\R)$.%

This shows that the $x_i$-subsystem is ISS in summation formulation and the corresponding gain operator is a linear operator $\Gamma:\ell_\infty^+(\Z) \to \ell_\infty^+(\Z)$, acting on $s=(s_i)_{i\in\Z}$ as $\Gamma(s) = (as_{i-1} + b s_{i+1})_{i\in\Z}$. It is easy to see that
{
\begin{align*}
\|\Gamma\| 
&:= \sup_{\|s\|_{\ell_\infty(\Z)}=1}\|\Gamma s\|_{\ell_\infty(\Z)}
=  \|\Gamma {\bf 1}\|_{\ell_\infty(\Z)}
= a+b <1,%
\end{align*}
}
and thus $r(\Gamma)<1$, and the network is ISS by Corollary~\ref{cor:SGT-sum-linear-gains}. \qed%
\end{proof}

\subsection{Example: a nonlinear spatially invariant system}\label{examp: a nonlinear spatially invariant system}

Consider an infinite interconnection (in the sense of the previous sections)%
\begin{equation}\label{eq:cubic-spatially-invariant-system}
  \dot{x}_i = - x_i^3 + \max\{ax_{i-1}^3,b x_{i+1}^3,u \},\quad i\in\Z,%
\end{equation}
where $a,b>0$. As in Section~\ref{examp: a linear spatially invariant system}, each $\Sigma_i$ is a scalar system with the state $x_i \in\R$, internal inputs $x_{i-1}$, $x_{i+1}$ and an external input $u$, belonging to the input space $\Uc:=L_\infty(\R_+,\R)$. Let the state space for the interconnection $\Sigma$ be $X:=\ell_\infty(\Z)$.%

First we analyze the well-posedness of the interconnection \eqref{eq:cubic-spatially-invariant-system}. Define for $x = (x_i)_{i\in\Z} \in X$ and $v \in \R$%
\begin{equation*}
  f_i(x,v):= - x_i^3 + \max\{ax_{i-1}^3,b x_{i+1}^3,v \},\quad i \in\Z,%
\end{equation*}
as well as%
\begin{equation*}
  f(x,v):=(f_i(x,v))_{i\in\Z} \in \R^{\Z}.%
\end{equation*}
It holds that%
\begin{equation*}
  |f_i(x,v)| \leq  \|x\|_X^3 + {\max\{a,b\}}\max\{\|x\|^3_X,|v|\},%
\end{equation*}
and thus $f(x,v) \in X$ with $\|f(x,v)\|_X \leq  \|x\|_X^3 + {\max\{a,b\}} \max\{\|x\|^3_X,|v|\}$.%

Furthermore, $f$ is clearly continuous in the second argument. Let us show Lipschitz continuity of $f$ on bounded balls with respect to the first argument. For any $x = (x_i)_{i\in\Z} \in X$, $y = (y_i)_{i\in\Z} \in X$ and any $v \in \R$ we have%
\begin{align*}
\|f(x,u)-&f(y,u)\|_X 
= \sup_{i\in\Z}|f_i(x,u)-f_i(y,u)|\\
&= \sup_{i\in\Z}\big|- x_i^3 + \max\{ax_{i-1}^3,b x_{i+1}^3,v \}  + y_i^3 - \max\{ay_{i-1}^3,b y_{i+1}^3,v \}\big|\\
&\leq \sup_{i\in\Z}\big| x_i^3 - y_i^3\big|
+ \sup_{i\in\Z}\big|\max\{ax_{i-1}^3,b x_{i+1}^3,v \}  - \max\{ay_{i-1}^3,b y_{i+1}^3,v \}\big|.%
\end{align*}
By Birkhoff's inequality $|\max\{a_1,a_2,a_3\} - \max\{b_1,b_2,b_3\}|\leq \sum_{i=1}^3|a_i-b_i|$, which holds for all real $a_i,b_i$,  we obtain
%
%Consider the last term, and assume without loss of generality that%
%\begin{equation*}
  %\max\{ax_{i-1}^3,b x_{i+1}^3,v \}  - \max\{ay_{i-1}^3,b y_{i+1}^3,v \} \geq 0.%
%\end{equation*}
%If $\max\{ax_{i-1}^3,b x_{i+1}^3,v \} = ax_{i-1}^3$, then we have%
%\begin{align*}
%\max\{ax_{i-1}^3,&b x_{i+1}^3,v \}  - \max\{ay_{i-1}^3,b y_{i+1}^3,v \} \\
%&= ax_{i-1}^3  - \max\{ay_{i-1}^3,b y_{i+1}^3,v \} \leq ax_{i-1}^3 -ay_{i-1}^3.%
%\end{align*}
%If the maximum in $\max\{ax_{i-1}^3,b x_{i+1}^3,v \}$ is achieved at other terms, we have a similar estimate. Overall, we obtain%
\begin{align*}
\|f(x,u)&-f(y,u)\|_X 
\leq \sup_{i\in\Z}\big| x_i^3 - y_i^3\big|
+ a\sup_{i\in\Z}\big|x_{i-1}^3 -y_{i-1}^3\big|
+ b\sup_{i\in\Z}\big|x_{i+1}^3 -y_{i+1}^3\big|\\
&= (1+a+b)\sup_{i\in\Z}\big| x_i^3 - y_i^3\big|
\leq (1+a+b) \sup_{i\in\Z}\big| x_i - y_i\big| \sup_{i\in\Z}\big| x_i^2 +x_iy_i + y_i^2\big| \\
&\leq (1+a+b)\|x-y\|_X \big(\|x\|^2_X + \|x\|_X\|y\|_X + \|y\|^2_X\big),
\end{align*}
%
%As $(x_i)_{i\in\Z} \to (x_i^3)_{i\in\Z}$ is Lipschitz continuous on bounded sets of $\ell_\infty(\Z)$ due to
%\begin{align*}
  %\sup_{i\in\Z}\big| x_i^3 - y_i^3\big| &\leq \sup_{i\in\Z}\big| x_i - y_i\big| 
%\sup_{i\in\Z}\big| x_i^2 +x_iy_i + y_i^2\big| 
%\leq 2\|x-y\|_X \big(\|x\|^2_X + \|y\|^2_X\big),%
%\end{align*}
%we finally obtain%
%\begin{equation*}
%\|f(x,u) - f(y,u)\|_X 
%\leq 2(a+b+1)\|x-y\|_X \big(\|x\|^2_X + \|y\|^2_X\big),%
%\end{equation*}
which shows Lipschitz continuity of $f$ with respect to the first argument on the bounded balls in $X$, uniformly with respect to the second argument.%

According to \cite[Thm.~2.4]{AuW96},\footnote{The cited result assumes a global Lipschitz condition and accordingly ensures forward completeness. However, via the retraction method this result can easily be localized.} this ensures that the Carath\'eodory solutions of \eqref{eq:cubic-spatially-invariant-system} exist locally, are unique for any fixed initial condition $x_0\in X$ and external input $u\in\Uc$. We denote the corresponding maximal solution by $\phi(\cdot,x_0,u)$. One can easily check that the triple $\Sigma:=(X,\Uc,\phi)$ defines a well-posed interconnection in the sense of this paper, and furthermore $\Sigma$ has BIC property (cf.~\cite[Thm.~4.3.4]{CaH98}).%

We proceed to the stability analysis:
\begin{proposition}
\label{prop:Stability-cubic-systems} 
The coupled system \eqref{eq:cubic-spatially-invariant-system} is ISS if and only if $\max\{a,b\}<1$.
\end{proposition}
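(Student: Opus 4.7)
The plan is to handle the two implications separately. For the necessity of $\max\{a,b\} < 1$, I would exploit spatial invariance: taking $u \equiv 0$ and a spatially constant initial state $x_i(0) \equiv x^* > 0$, the uniqueness of Carath\'eodory solutions (already established via Lipschitz continuity on bounded balls) forces the trajectory to remain spatially constant, $x_i(t) \equiv y(t)$, where $y$ obeys the scalar ODE $\dot y = (\max\{a,b\}-1)\,y^3$. If $\max\{a,b\} = 1$, then $y \equiv x^*$, contradicting the decay in $t$ required by any ISS bound with zero input; if $\max\{a,b\} > 1$, then $y$ blows up in finite time, likewise ruling out an ISS estimate.

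For the sufficiency direction, assume $\max\{a,b\} < 1$ and fix any $\lambda \in (\max\{a,b\},1)$. The plan is to apply the linear ISS small-gain theorem in semi-maximum form, Corollary~\ref{cor:SGT-sup-linear-gains}. The first step is to verify that each $\Sigma_i$ is ISS in the sense of Definition~\ref{def_subsys_iss_semimax} with linear gains. I would aggregate the inputs into $v(t) := \max\{a\,x_{i-1}(t)^3,\,b\,x_{i+1}(t)^3,\,u(t)\}$, analyze the scalar equation $\dot x_i = -x_i^3 + v$ via $V(x_i) = \tfrac{1}{2}x_i^2$, and check by direct calculation that
\[
\dot V \leq -4(1-\lambda)V^2 \quad \text{whenever} \quad |x_i|^3 \geq |v|/\lambda.
\]
A standard ISS Lyapunov comparison argument then delivers some $\beta \in \mathcal{KL}$ (independent of $i$) with
\[
|x_i(t)| \leq \beta(|x_i(0)|,t) + \lambda^{-1/3}\,\|v\|_{[0,t]}^{1/3}.
\]
Bounding $\|v\|_{[0,t]}^{1/3}$ by $\max\bigl\{a^{1/3}\|x_{i-1}\|_{[0,t]},\,b^{1/3}\|x_{i+1}\|_{[0,t]},\,\|u\|_{\Uc}^{1/3}\bigr\}$ and using $\max\{A,B,C\}\le\max\{A,B\}+C$ for $A,B,C\ge0$, I obtain the semi-maximum ISS estimate with linear internal gains $\gamma_{i,i-1}(r) = \lambda^{-1/3}a^{1/3}r$, $\gamma_{i,i+1}(r) = \lambda^{-1/3}b^{1/3}r$ and external gain $r \mapsto \lambda^{-1/3}r^{1/3}$.

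The uniformity assumption~(ii) of Theorem~\ref{thm_smallgain_iss_semimax} is trivial since all subsystems are identical, and Assumption~\ref{ass_gammamax_welldef} is immediate. For condition~\eqref{eq_sg_hsr_cond} of Corollary~\ref{cor:SGT-sup-linear-gains}, every admissible product $\gamma_{j_1 j_2}\cdots\gamma_{j_n j_{n+1}}$ along a path in the nearest-neighbour interaction graph is a product of $n$ factors drawn from $\{\lambda^{-1/3}a^{1/3},\,\lambda^{-1/3}b^{1/3}\}$, hence bounded by $(\lambda^{-1/3}\max\{a,b\}^{1/3})^n$; the $n$-th root of the supremum therefore equals $\lambda^{-1/3}\max\{a,b\}^{1/3}$, which is strictly less than $1$ by the choice of $\lambda$. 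Corollary~\ref{cor:SGT-sup-linear-gains} then yields ISS of $\Sigma$. The main obstacle is to extract a \emph{sharp} cube-root ISS gain at the subsystem level, as any multiplicative constant exceeding $\max\{a,b\}^{-1/3}$ would kill the small-gain inequality; the freedom to let $\lambda$ approach $\max\{a,b\}$ from above in the Lyapunov argument is exactly what supplies the required slack.
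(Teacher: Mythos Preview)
Your proof is correct and follows essentially the same route as the paper: spatially constant solutions reducing to a scalar ODE establish necessity, and for sufficiency an ISS Lyapunov argument at the subsystem level (the paper uses $V=|x_i|$ and the parametrization $(1+\varepsilon)^{1/3}$ in place of your $V=\tfrac12 x_i^2$ and $\lambda^{-1/3}$, which is equivalent) produces linear internal gains small enough to invoke Corollary~\ref{cor:SGT-sup-linear-gains}.
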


\begin{proof}
\q{$\Rightarrow$}: For any $a,b>0$ consider the scalar equation %
\begin{equation*}
  \dot{z} = - (1-\max\{a,b\})z^3,%
\end{equation*}
subject to an initial condition $z(0)=x^*$. The function $t \mapsto (z(t))_{i\in\Z}$ is a solution of \eqref{eq:cubic-spatially-invariant-system} subject to an initial condition $(x^*)_{i\in\Z}$ and input $u\equiv 0$. This shows that for $\max\{a,b\} \geq 1$ the system \eqref{eq:cubic-spatially-invariant-system} is not ISS.

\q{$\Leftarrow$}:
%Define $V_i(x_i):=|x_i|$ for all $i$ and all $x_i\in\R$.
Consider $x_{i-1}$, $x_{i+1}$ and $u$ as inputs to the $x_i$-subsystem of \eqref{eq:cubic-spatially-invariant-system}
and define $q:=\max\{ax_{i-1}^3,b x_{i+1}^3,u \}$.
The derivative of $|x_i(\cdot)|$ along the trajectory satisfies for almost all $t$ the following inequality:
\begin{equation*}
  \frac{\mathrm{d}}{\mathrm{d}t}|x_i(t)|\leq -|x_i(t)|^3 + q(t)   \leq -|x_i(t)|^3 + \|q\|_\infty.%
%\dot{V}_i(x_i) \leq -(V_i(x_i))^3 + |q|.
\end{equation*}
For any $\varepsilon>0$, if $\|q\|_\infty \leq \frac{1}{1+\varepsilon}|x_i(t)|^3$, we obtain%
\begin{equation*}
  \frac{\mathrm{d}}{\mathrm{d}t}|x_i(t)|\leq -\frac{\varepsilon}{1+\varepsilon}|x_i(t)|^3.
%\dot{V}_i(x_i) \leq -(V_i(x_i))^3 + |q|.
\end{equation*}
Arguing as in the proof of direct Lyapunov theorems ($x_i \mapsto |x_i|$ is an ISS Lyapunov function for the $x_i$-subsystem), see, e.g., \cite[Lem.~2.14]{SoW95}, we obtain that there is a certain $\beta\in\KL$
such that for all $t\geq 0$ it holds that
\begin{align*}
|x_i(t)| & \leq \beta(|x_i(0)|,t) +  \big((1+\varepsilon)\|q\|_\infty \big)^{1/3}\\
& = \beta(|x_i(0)|,t) +  \max\{a_1 \|x_{i-1}\|_\infty,b_1 \|x_{i+1}\|_\infty,(1+\varepsilon)^{1/3}\|u\|_\infty^{1/3} \}\\
& \leq \beta(|x_i(0)|,t) +  \max\{a_1 \|x_{i-1}\|_\infty,b_1 \|x_{i+1}\|_\infty\} +(1+\varepsilon)^{1/3}\|u\|_\infty^{1/3},
%\label{eq:}
\end{align*}
where $a_1=(1+\varepsilon)^{1/3}a^{1/3}$, $b_1=(1+\varepsilon)^{1/3}b^{1/3}$.%

This shows that the $x_i$-subsystem is ISS in semi-maximum formulation with the corresponding homogeneous {of degree one} gain operator 
$\Gamma:\ell_\infty^+(\Z) \to \ell_\infty^+(\Z)$ given for all $s=(s_i)_{i\in\Z}$ by $\Gamma(s) = (\max\{a_1 s_{i-1}, b_1 s_{i+1}\})_{i\in\Z}$.%

The previous computations are valid for all $\varepsilon>0$. Now pick $\varepsilon>0$ such that $a_1<1$ and $b_1<1$, which is possible
as $a \in (0,1)$ and $b\in(0,1)$. The ISS of the network follows by Corollary~\ref{cor:SGT-sup-linear-gains}. \qed%
\end{proof}

\section{Small-gain conditions}\label{sec:Small-gain conditions}

Key assumptions in the ISS and UGS small-gain theorems are the monotone limit property and monotone bounded invertibility property, respectively. In this section, we thoroughly investigate these properties. More precisely, in Section~\ref{sec:A uniform small-gain condition and the MBI property} we characterize the MBI property in terms of the uniform small-gain condition, in Section~\ref{sec:Non-uniform small-gain conditions}, we relate the uniform small-gain condition to several types of non-uniform small-gain conditions which have already been exploited in the small-gain analysis of finite and infinite networks. In Section~\ref{sec:Finite-dimensional systems}, we derive new relationships between small-gain conditions in the finite-dimensional case.
{Finally, in Section~\ref{sec:Systems with linear gains}, we provide efficient criteria for the MLIM and the MBI property in case of linear operators and operators of the form $\Gamma_\otimes$ induced by linear gains.}

\subsection{A uniform small-gain condition and the MBI property}\label{sec:A uniform small-gain condition and the MBI property}

As we have seen in Section~\ref{sec:Small-gain theorems}, the monotone bounded invertibility is a crucial property for the small-gain analysis of finite and infinite networks. The next proposition yields small-gain type criteria for the MBI property. Although in the context of small-gain theorems in terms of trajectories, derived in this paper, we are interested primarily in the case of $(X,X^+) = (\ell_\infty(I),\ell^+_\infty(I))$, we prove the results in a more general setting, which besides the mathematical appeal  also has important applications to Lyapunov-based small-gain theorems for infinite networks, where other choices for $X$ are useful, see, e.g., \cite{KMS19} where $X=\ell_p$ for finite $p\geq1$.%

\begin{proposition}\label{prop:criteria-MBI-without-unit}
Let $(X,X^+)$ be an ordered Banach space with a generating cone $X^+$. For every nonlinear operator $A: X^+ \to X^+$, the following conditions are equivalent:
\begin{enumerate}[label=(\roman*)]
\item\label{itm:MBI-criterion-without-unit-1} $\id - A$ satisfies the MBI property.
\item\label{itm:MBI-criterion-without-unit-2} The \emph{uniform small-gain condition} holds: There exists $\eta \in \Kinf$ such that%
\begin{equation}
\label{eq:uSGC-dist-form}
  \dist(A(x) - x,X^+) \geq \eta(\|x\|_X) \mbox{\quad for all\ } x \in X^+.%
\end{equation}
\end{enumerate}
\end{proposition}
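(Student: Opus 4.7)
The plan is to establish the two implications separately, with the generating cone property (and the bounded decomposition~\eqref{eq_bounded_decomposition}) playing the central role in the harder direction.

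For (ii)$\Rightarrow$(i), I would argue directly. Given $v,w \in X^+$ with $(\id-A)(v) \le w$, I would observe that $y := A(v) - v + w$ lies in $X^+$ (this is exactly the ordering condition), and hence
\begin{equation*}
  \dist(A(v)-v,X^+) \;\le\; \|A(v)-v - y\|_X \;=\; \|w\|_X.
\end{equation*}
Combining with the uniform small-gain condition gives $\eta(\|v\|_X) \le \|w\|_X$, so $\|v\|_X \le \eta^{-1}(\|w\|_X)$, and one may take $\xi := \eta^{-1} \in \Kinf$. This part is just a matter of correctly unwinding the definitions.

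For (i)$\Rightarrow$(ii), which I expect to be the main obstacle, the difficulty is that an element $z \in X$ approximating $A(v)-v$ from $X^+$ witnesses only that $(\id-A)(v) \le z$ with $z \in X$ — not necessarily in $X^+$ — so the MBI hypothesis cannot be applied directly. Here the generating cone assumption is essential. Fix $v \in X^+$ and let $\delta > \dist(A(v)-v,X^+)$; pick $y \in X^+$ with $\|A(v)-v-y\|_X < \delta$, and set $z := y - (A(v)-v)$, so $\|z\|_X < \delta$ and $(\id-A)(v) \le z$. Using~\eqref{eq_bounded_decomposition}, decompose $z = z_1 - z_2$ with $z_1,z_2 \in X^+$ and $\|z_1\|_X,\|z_2\|_X \le M\|z\|_X$. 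Then
\begin{equation*}
  (\id-A)(v) \;\le\; z \;\le\; z_1 \;\in\; X^+,
\end{equation*}
so the MBI property yields $\|v\|_X \le \xi(\|z_1\|_X) \le \xi(M\delta)$. Letting $\delta \downarrow \dist(A(v)-v,X^+)$ and using continuity of $\xi$, I get
\begin{equation*}
  \|v\|_X \;\le\; \xi\bigl(M\cdot \dist(A(v)-v,X^+)\bigr),
\end{equation*}
which upon inversion gives $\dist(A(v)-v,X^+) \ge \tfrac{1}{M}\xi^{-1}(\|v\|_X)$. Finally, I would set $\eta(r) := \tfrac{1}{M}\xi^{-1}(r)$ and verify $\eta \in \Kinf$ (inheriting continuity, strict monotonicity, vanishing at zero, and unboundedness from $\xi \in \Kinf$). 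This yields~\eqref{eq:uSGC-dist-form} and completes the equivalence.
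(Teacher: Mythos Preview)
Your proof is correct and follows essentially the same route as the paper's: for (ii)$\Rightarrow$(i) you both observe that $A(v)-v+w \in X^+$ gives a witness for the distance, and for (i)$\Rightarrow$(ii) you both approximate $A(v)-v$ by a point of $X^+$, apply the bounded decomposition~\eqref{eq_bounded_decomposition} to the residual, and bound by the positive part to invoke MBI. The only cosmetic difference is that the paper carries an additive $\varepsilon$ through and lets it vanish at the end, whereas you take a limit $\delta \downarrow \dist(A(v)-v,X^+)$ using continuity of $\xi$; both yield $\eta = \tfrac{1}{M}\xi^{-1}$.
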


\begin{proof}
\ref{itm:MBI-criterion-without-unit-1} $\Rightarrow$ \ref{itm:MBI-criterion-without-unit-2}. Fix $x \in X^+$ and write $a := (A - \id)(x)$. Let $\varepsilon > 0$. We choose $z \in X^+$ such that $\|a-z\|_X \le \dist(a,X^+) + \varepsilon$ and we set $y := a-z$. If the constant $M > 0$ is chosen as in~\eqref{eq_bounded_decomposition}, we can decompose $y$ as $y = u-v$, where $u,v \in X^+$ and $\|u\|_X,\|v\|_X \le M \|y\|_X \le M\dist(a,X^+) + M\varepsilon$. Then we have
\begin{equation*}
  (\id - A)(x) = -a = -y-z = v - (u+z) \le v,
\end{equation*}
so it follows from 
{the MBI property of $\id - A$} that
\begin{equation*}
  \|x\|_X \le \xi(\|v\|_X) \le \xi\big(M \dist(a,X^+)+M\varepsilon\big).
\end{equation*}
Consequently,
\begin{equation*}
  \dist(a,X^+) \ge \frac{1}{M} \xi^{-1}(\|x\|_X) - \varepsilon.
\end{equation*}
Since $\varepsilon$ was arbitrary, this implies~(ii) with $\eta := \frac{1}{M}\xi^{-1}$.

\ref{itm:MBI-criterion-without-unit-2} $\Rightarrow$ \ref{itm:MBI-criterion-without-unit-1}. Let $v,w \in X^+$ and $(\id-A)(v) \le w$. The vector $z := w + (A-\id)(v)$ is positive, so from (ii) it follows that%
\begin{equation*}
  \eta(\|v\|_X) \leq \dist\big( (A-\id)(v), X^+ \big) \le \|(A-\id)(v) - z\|_X = \|-w\|_X = \|w\|_X.%
\end{equation*}
Hence, $\|v\|_X \le \eta^{-1}(\|w\|_X)$. \qed%
\end{proof}

\begin{remark}
The uniform small-gain condition in {Proposition~\ref{prop:criteria-MBI-without-unit}\ref{itm:MBI-criterion-without-unit-2}} is a uniform version of the well-known small-gain condition, sometimes also called \emph{no-joint-increase condition}:%
\begin{equation*}
  A(x) \not\geq x \mbox{\quad for all\ } x \in X^+ \setminus \{0\}.%
\end{equation*}
Indeed, $A(x) \not\geq x$ is equivalent to $A(x) - x \not\geq 0$, which in turn is equivalent to $\dist(A(x) - x, X^+) > 0$.
\end{remark}

\begin{remark}
It is important to point out that the distance to the positive cone which occurs in the uniform small-gain condition in Proposition~\ref{prop:criteria-MBI-without-unit} can be explicitly computed on many concrete spaces. Indeed, many important real-valued sequence or function spaces such as $X = \ell_p$ or $X = L_p(\Omega,\mu)$ (for $p \in [1,\infty]$ and a measure space $(\Omega,\mu)$) are not only ordered Banach spaces but so-called \emph{Banach lattices}.%

An ordered Banach space $(X,X^+)$ is called a \emph{Banach lattice} if, for all $x \in X$, the set $\{-x,x\}$ has a smallest upper bound in $X$, which is usually called the \emph{modulus} of $x$ and denoted by $|x|$, and if $\|x\|_X \le \|y\|_X$ whenever $|x| \le |y|$. In concrete sequence and function spaces, the modulus of a function is just the pointwise (respectively, almost everywhere) modulus.%

Now, assume that $(X,X^+)$ is a Banach lattice and let $x \in X$. Then the vectors $x^+ := \frac{|x|+x}{2} \ge 0$ and $x^- := \frac{|x|-x}{2} \ge 0$ are called the \emph{positive} and \emph{negative part} of $x$, respectively; clearly, they satisfy $x^+ - x^- = x$ and $x^+ + x^- = |x|$. If $X$ is a concrete sequence or function space, then $x^-$ is simply $0$ at all points where $x$ is positive, and equal to $-x$ at all points where $x$ is negative.%

In a Banach lattice $(X,X^+)$, we have the formula%
\begin{equation*}
  \dist(x,X^+) = \|x^-\|_X%
\end{equation*}
for each $x \in X$, as can easily be verified.
%Indeed, on the one hand we have%
%\begin{equation*}
%  \dist(x,X^+) \le \|x-x^+\|_X = \|-x^-\|_X = \|x^-\|_X.%
%\end{equation*}
%On the other hand, every $y \in X^+$ satisfies%
%\begin{equation*}
%  |x^-| = x^- = (x-y +y)^- \le (x-y)^- + y^- = (x-y)^- \le |x-y|,%
%\end{equation*}
%where we use that the mapping $z \mapsto z^-$ is subadditive. As $X$ is a Banach lattice, %$\|x^-\|_X \le \|x-y\|_X$.
\end{remark}

%To prove this, we recall the following characterization of interior points of $X^+$:%

%\begin{lemma}\label{lem:order-units}
%	Let $(X,X^+)$ be an ordered Banach space and let $z \in X^+$. Then the following %assertions are equivalent:
%	\begin{enumerate}[label=(\roman*)]
%		\item\label{itm:order-unit-criterion-1} The vector $z$ is an element of the topological %interior of $X^+$.
%		\item\label{itm:order-unit-criterion-2} There is a number $c > 0$ such that $y \leq z$ %for all $y \in X$ of norm $\|y\|_X \le c$.
%		\item\label{itm:order-unit-criterion-3} There is a number $c > 0$ such that $y \geq -z$ %for all $y \in X$ of norm $\|y\|_X \le c$.
%	\end{enumerate}
%\end{lemma}

%These equivalences are well-known; see, e.g., \cite[Prop.~2.11]{GW20} for an overview of %various characterizations of interior points of $X^+$, including the equivalence %of~\ref{itm:order-unit-criterion-1} and~\ref{itm:order-unit-criterion-2}. The equivalence% of~\ref{itm:order-unit-criterion-2} and~\ref{itm:order-unit-criterion-3} is obvious. 

If the cone of the ordered Banach space $(X,X^+)$ has nonempty interior, the uniform small-gain condition from Proposition~\ref{prop:criteria-MBI-without-unit} can also be expressed by a condition that involves a fixed interior point of $X^+$.%

\begin{proposition}\label{prop:criteria-MBI-with-unit}
Let $(X,X^+)$ be an ordered Banach space, assume that the cone $X^+$ has nonempty interior and let $z$ be an interior point of $X^+$. For every nonlinear operator $A: X^+ \to X^+$, the following conditions are equivalent:%
\begin{enumerate}[label=(\roman*)]
\item\label{itm:MBI-criterion-with-unit-1} There is $\eta \in \Kinf$ such that%
\begin{equation}
\label{eq:uSGC-with-unit}
  A(x) \not\geq x - \eta(\|x\|_X)z \mbox{\quad for all\ } x \in X^+ \setminus \{0\}.%
\end{equation}
\item\label{itm:MBI-criterion-with-unit-2} The uniform small-gain condition from Proposition~\ref{prop:criteria-MBI-without-unit}\ref{itm:MBI-criterion-without-unit-2} holds.
\end{enumerate}
\end{proposition}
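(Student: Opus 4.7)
My plan is to prove both implications by direct, elementary arguments using only one fact about $z$: its interiority yields some $\delta > 0$ with $z + B_\delta \subset X^+$. In other words, $\eta z + w \in X^+$ whenever $\eta > 0$ and $\|w\|_X < \delta\eta$; this is the single quantitative statement I will apply throughout.

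For the direction \ref{itm:MBI-criterion-with-unit-2} $\Rightarrow$ \ref{itm:MBI-criterion-with-unit-1}, starting from the $\eta \in \Kinf$ in the uniform small-gain condition \eqref{eq:uSGC-dist-form}, I would show that the choice $\tilde\eta := \eta/(2\|z\|_X) \in \Kinf$ works for \eqref{eq:uSGC-with-unit}. A contradiction argument is natural: if $A(x) \geq x - \tilde\eta(\|x\|_X) z$ held for some $x \in X^+\setminus\{0\}$, then $(A(x)-x) + \tilde\eta(\|x\|_X)z$ would be a positive element witnessing
\begin{equation*}
  \dist(A(x) - x, X^+) \leq \tilde\eta(\|x\|_X)\|z\|_X = \tfrac{1}{2}\eta(\|x\|_X),
\end{equation*}
contradicting~\eqref{eq:uSGC-dist-form}.

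For the converse \ref{itm:MBI-criterion-with-unit-1} $\Rightarrow$ \ref{itm:MBI-criterion-with-unit-2}, with $\eta_0 \in \Kinf$ as in \eqref{eq:uSGC-with-unit}, I claim that $\eta := \delta \eta_0 \in \Kinf$ realizes \eqref{eq:uSGC-dist-form}. Fix $x \in X^+\setminus\{0\}$ and suppose, towards a contradiction, that some $y \in X^+$ satisfies $\|A(x) - x - y\|_X < \delta\eta_0(\|x\|_X)$. The perturbation $w := A(x) - x - y$ then satisfies $\|w/\eta_0(\|x\|_X)\|_X < \delta$, so the interiority observation gives $\eta_0(\|x\|_X) z + w \in X^+$, and hence
\begin{equation*}
  A(x) - x + \eta_0(\|x\|_X) z \;=\; y + \bigl(\eta_0(\|x\|_X)z + w\bigr) \;\in\; X^+,
\end{equation*}
which rewrites as $A(x) \geq x - \eta_0(\|x\|_X)z$ and contradicts \eqref{eq:uSGC-with-unit}. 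Taking the infimum over admissible $y$ yields $\dist(A(x)-x, X^+) \geq \delta\eta_0(\|x\|_X)$; the case $x = 0$ is trivial because $A(0) \in X^+$.

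There is essentially no hard step here. Both directions are short rearrangements once the interiority of $z$ is converted into the quantitative admissibility statement for perturbations of $\eta z$. The only bookkeeping is to verify that the constructed comparison functions $\tilde\eta = \eta/(2\|z\|_X)$ and $\delta\eta_0$ remain in $\Kinf$, which is automatic since $\Kinf$ is closed under multiplication by positive constants.
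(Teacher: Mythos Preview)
Your proof is correct and follows essentially the same approach as the paper's. Both directions use the same constants ($\eta/(2\|z\|_X)$ for \ref{itm:MBI-criterion-with-unit-2} $\Rightarrow$ \ref{itm:MBI-criterion-with-unit-1}, and a constant multiple of $\eta$ coming from the interiority radius for \ref{itm:MBI-criterion-with-unit-1} $\Rightarrow$ \ref{itm:MBI-criterion-with-unit-2}) and the same contradiction arguments; the only cosmetic difference is that the paper quotes an external reference for the fact that $\|y\|_X \le c$ implies $y \ge -z$, whereas you derive the equivalent statement $z + B_\delta \subset X^+$ directly from the definition of an interior point.
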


\begin{proof}
\ref{itm:MBI-criterion-with-unit-1} $\Rightarrow$ \ref{itm:MBI-criterion-with-unit-2}. Let \ref{itm:MBI-criterion-with-unit-1} hold with some $\eta\in\Kinf$. By \cite[Prop.~2.11]{GW20}, we can find a number $c>0$ such that for every $y \in X$ we have%
\begin{equation}\label{eq_oldlem3}
  \|y\|_X \le c \quad \Rightarrow \quad y \geq -z.%
\end{equation}
Assume towards a contradiction that~(ii) does not hold. Then \eqref{eq:uSGC-dist-form} fails, in particular, for the function $c\eta$. Thus, we can infer that there is $x \in X^+ \setminus \{0\}$ so that 
\begin{equation*}
  \dist\big( (A-\id)(x), X^+ \big) < c\eta(\|x\|_X).
\end{equation*}
Hence, there exists $y \in X^+$ such that
\begin{equation*}
  \left\| (A-\id)(x) - y \right\|_X \le c\eta(\|x\|_X).
\end{equation*}
Consequently, the vector $\frac{(A-\id)(x) - y}{\eta(\|x\|_X)}$ has norm at most $c$, so it follows from \eqref{eq_oldlem3} that $(A-\id)(x) - y \ge - \eta(\|x\|_X) z$. Thus,%
\begin{equation*}
  (A-\id)(x) \ge -\eta(\|x\|_X)z + y \ge -\eta(\|x\|_X)z,%
\end{equation*}
which shows that \eqref{eq:uSGC-with-unit} fails for the function $\eta$, a contradiction.%

\ref{itm:MBI-criterion-with-unit-2} $\Rightarrow$ \ref{itm:MBI-criterion-with-unit-1}. Let \ref{itm:MBI-criterion-with-unit-2} hold with a certain $\eta\in\Kinf$. We show that \eqref{eq:uSGC-with-unit} holds for the function $\frac{\eta}{2\|z\|_X}$ substituted for $\eta$. Assume towards a contradiction that \eqref{eq:uSGC-with-unit} fails for the function $\frac{\eta}{2\|z\|_X}$. Then there is $x \in X^+ \setminus \{0\}$ such that%
\begin{equation*}
  (A - \id)(x) + \frac{\eta(\|x\|_X)}{2\|z\|_X} z \geq 0.%
\end{equation*}
Hence, it follows that%
%\begin{align*}
  %\dist\big((A - \id)(x),X^+\big) &\le \Bigl\|(A-\id)(x) - \big((A - \id)(x) + \frac{\eta(\|x\|_X)}{2\|z\|_X} z\big) \Bigr\|_X \\
	%&= \frac{\eta(\|x\|_X)}{2},%
%\end{align*}
\begin{align*}
  \dist\big((A - \id)(x),X^+\big) &\le \Bigl\|(A-\id)(x) - \big((A - \id)(x) + \frac{\eta(\|x\|_X)}{2\|z\|_X} z\big) \Bigr\|_X = \frac{\eta(\|x\|_X)}{2},%
\end{align*}
which shows that~\eqref{eq:uSGC-dist-form} fails for the function $\eta$. \qed%
\end{proof}

A typical example of an ordered Banach space whose cone has nonempty interior is $(X,X^+) = (\ell_{\infty}(I),\ell_{\infty}(I)^+)$ for some index set $I$. For instance, the vector ${\bf 1}$ is an interior point of the positive cone in this space.%

\subsection{Non-uniform small-gain conditions}\label{sec:Non-uniform small-gain conditions}

In Propositions~\ref{prop:criteria-MBI-without-unit} and~\ref{prop:criteria-MBI-with-unit}, we characterized the MBI property in terms of the uniform small-gain condition. In this {subsection}, we recall several further small-gain conditions, which have been used in the literature for the small-gain analysis of finite and infinite networks \cite{DRW07,DRW10,DMS19a}, and relate them to the uniform small-gain condition.% 

In this {subsection}, we always suppose that $(X,X^+) = (\ell_{\infty}(I),\ell_{\infty}^+(I))$ for some nonempty index set $I$ (which is precisely the space in which gain operators act).

%%
%\sideremark{
	%\jg{
		%Ich denke, es würde genügen, nur kurz im Fließtext zu erwähnen, 
		%dass wir hier den Fall $(X,X^+) = (\ell_{\infty}(I),\ell_{\infty}^+(I))$ betrachten
		%-- schließlich wird in allen Aussagen in diesem Unterabschnitt explizit angegeben, 
		%auf welchem Raum $A$ definiert ist.
	%}
%}
%%
%\begin{assumption}\label{ass:ell-infty}
%
%\end{assumption}

\begin{definition}\label{def:SGC}
We say that a nonlinear operator $A:\ell_{\infty}^+(I) \to \ell_{\infty}^+(I)$ satisfies%
\begin{enumerate}[label=(\roman*)]
\item \label{NL-SGC-def-item1} the \emph{small-gain condition} if%
\begin{equation}\label{eq:SGC}
   A(x)\not\geq x \mbox{\quad for all\ } x \in \ell_{\infty}^+(I)\setminus\{0\}.
\end{equation}
\item \label{NL-SGC-def-item2} the \emph{strong small-gain condition} if there exists $\rho\in\Kinf$ and a corresponding operator ${D_{\rho}}:\ell_{\infty}^+(I) \to \ell_{\infty}^+(I)$, defined for any $x\in \ell_{\infty}^+(I)$ by%
\begin{equation*}
  {D_{\rho}}(x) := \big((\id + \rho)(x_i)\big)_{i\in I},
\end{equation*}
such that%
\begin{equation}\label{eq:strong-SGC-nonlinear}
  {D_{\rho}}\circ A(x) \not\geq x \mbox{\quad for all\ } x\in \ell_{\infty}^+(I)\setminus\{0\}.%
\end{equation}

\item 
{ \label{NL-SGC-def-item3} the \emph{robust small-gain condition} if there is $\omega\in\Kinf$ with $\omega<\id$ such that for all $i,j \in I$ the operator $A_{i,j}$ given by%
\begin{equation}
\label{eq:A-modified}
  A_{i,j}(x) := A(x) + \omega(x_j) e_i \mbox{\quad for all\ } x \in \ell_{\infty}^+(I)%
\end{equation}
satisfies the small-gain condition \eqref{eq:SGC};
here, $e_i \in \ell_{\infty}(I)$ denotes the $i$-th canonical unit vector.

\item \label{NL-SGC-def-item4} the \emph{robust strong small-gain condition} if there are $\omega,\rho\in\Kinf$ with $\omega<\id$ such that for all $i,j \in I$ the operator
$A_{i,j}$ defined by \eqref{eq:A-modified} satisfies the strong small-gain condition \eqref{eq:strong-SGC-nonlinear} with the same $\rho$ for all $i,j$. \hfill $\lhd$
}
\end{enumerate}
\end{definition}

%\sideremark{\ck{Ich habe den Remark über $\omega<\mathrm{id}$ hier einfach mal gestrichen.}}

%{
%\begin{remark}
%\label{rem:Omega--less-than-identity}
%\sideremark{
%	\ck{
%		Remark~\ref{rem:Omega--less-than-identity} kann man eventuell auch streichen 
%		(vielleicht reicht auch eine Fußnote).
%	}
%	
%	\mir{
%		Ja.
%	}
%} 
%The condition $\omega<\id$ in the definitions of the robust and strong robust %small-gain conditions is stated explicitly only for the sake of clarity, and% %follows from the other properties of the operator $A$. In fact, if $A$ satisfies %a robust small-gain condition with a certain $\omega\in\Kinf$, then $A_{i,j}(x) %\not\geq x$ for all $x$, and all $i,j$, which implies $\omega(x_j)e_i\not\geq x$ %for all $x,i,j$, and by taking $i=j$ and $r:=x_j$ we see that $\omega(r)<r$ %necessarily holds for all $r>0$.
%\end{remark}
%}

The strong small-gain condition was introduced in \cite{DRW07}, where it was shown that if the gain operator satisfies the strong small-gain condition, then a finite network consisting of ISS systems (defined in a summation formulation) is ISS. The robust strong small-gain condition has been introduced in \cite{DMS19a} in the context of the Lyapunov-based small-gain analysis of infinite networks.%

\begin{remark}\label{rem:Small-gain conditions and cycles} 
For finite networks, also so-called \emph{cyclic small-gain conditions} play an important role, as they help to effectively check the small-gain condition \eqref{eq:SGC} in the case when $A = \Gamma_\otimes$, which is important for the small-gain theorems in the maximum formulation, see \cite{Mir19b} for more discussions on this topic. For infinite networks, the cyclic condition for $\Gamma_\otimes$ is implied by \eqref{eq:SGC}, see \cite[Lem.~4.1]{DMS19a}, but is far too weak for the small-gain analysis. For {max-linear} systems, Remark~\ref{rem_homogen_spectral_radius} and Corollary~\ref{cor:SGT-sup-linear-gains} are reminiscent of the cyclic small-gain conditions.
\end{remark}

%\mir{Check whether we use anywhere Lemmas \ref{lem:uSGC-lemma1} , \ref{lem:uSGC-lemma1}.  }
%We start with several technical lemmas on comparison functions
%
%\begin{lemma}\label{lem:uSGC-lemma1} 
%Let $A:\ell_{\infty}^+(I)\to \ell_{\infty}^+(I)$ be a nonlinear operator. If the property \ref{itm:MBI-criterion-with-unit-1} in Proposition~\ref{prop:criteria-MBI-with-unit} holds for $z = {\bf 1}$, then $\eta(r)<r$ for all $r>0$.%
%\end{lemma}
%
%\begin{proof}
%Assume that $\eta(r)\geq r$ for a certain $r>0$. Pick $x \in \ell_{\infty}^+(I)$ with $\|x\|_{\ell_{\infty}(I)} = r$. By assumption, for this $x$ there is $i\in I$ such that $A(x)_i < x_i - \eta(\|x\|_{\ell_{\infty}(I)})\leq \|x\|_{\ell_{\infty}(I)} - \eta(\|x\|_{\ell_{\infty}(I)}) \leq 0$, and thus $A(x) \notin \ell_{\infty}^+(I)$, a contradiction. \qed%
%\end{proof}
%
%\begin{lemma}\label{lem:uSGC-lemma2} 
%Let $A:\ell_{\infty}^+(I)\to \ell_{\infty}^+(I)$ be a nonlinear operator. If the property \ref{itm:MBI-criterion-with-unit-1} in Proposition~\ref{prop:criteria-MBI-with-unit} holds with a certain $\eta\in\Kinf$ and $z = {\bf 1}$, then it holds also with any $\tilde{\eta}\in\Kinf$ such that $\tilde{\eta}(r)\leq \eta(r)$ for all $r\in\R_+$.
%\end{lemma}
%
%\begin{proof}
%Clear. \qed%
%\end{proof}

{

We say that a continuous function $\alpha:\R_+\to\R_+$ is of class $\PD$ if $\alpha(0)=0$ and $\alpha(r)>0$ for $r>0$.%

The following lemma is an extension of the considerations in \cite[p.~130]{KaJ11}.%

%\sideremark{\ck{Ich habe die erste Aussage ein bißchen umformuliert.}}

\begin{lemma}
\label{lem:KinfLipschitzLowerEstimate}
The following statements hold:%
\begin{enumerate}[label = (\roman*)]
	\item\label{itm:KinfLipschitzLowerEstimate-itm1} {For any $\alpha \in\PD$ and $L>0$, the function defined by% 
\begin{equation}\label{eq:LowerEstimRho_def}
  \rho(r) := \inf_{y\geq 0} \big\{\alpha(y) + L|y-r|\big\}%
\end{equation}
is in $\PD$, satisfies $\rho(s) \leq \alpha(s)$ for all $s \in \R_+$, and is globally Lipschitz with Lipschitz constant $L$.}
	\item\label{itm:KinfLipschitzLowerEstimate-itm2} If in \ref{itm:KinfLipschitzLowerEstimate-itm1} $\alpha\in \K$, then $\rho$ given by \eqref{eq:LowerEstimRho_def} is a $\K$-function.%
	\item\label{itm:KinfLipschitzLowerEstimate-itm3} If in \ref{itm:KinfLipschitzLowerEstimate-itm1} $\alpha\in \Kinf$, then $\rho$ given by \eqref{eq:LowerEstimRho_def} is a $\Kinf$-function.%
\end{enumerate}
\end{lemma}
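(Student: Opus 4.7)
The plan is to establish the three claims in sequence, using basic properties of the infimal convolution \eqref{eq:LowerEstimRho_def}.

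For part \ref{itm:KinfLipschitzLowerEstimate-itm1}, I would verify the three assertions in turn. The bound $\rho(s)\le\alpha(s)$ is immediate by plugging $y=s$ into the infimum. Global $L$-Lipschitz continuity follows from the triangle inequality $|y-r_1|\le|y-r_2|+|r_1-r_2|$: adding $\alpha(y)$ and taking the infimum over $y\ge0$ gives $\rho(r_1)\le\rho(r_2)+L|r_1-r_2|$, and symmetry closes the argument. For $\rho\in\PD$, note $\rho(0)=0$ because $y=0$ yields $\alpha(0)+0=0$ while both summands are nonnegative. Strict positivity at $r>0$ I would prove by contradiction: if $\rho(r)=0$, a minimizing sequence $(y_n)$ satisfies both $L|y_n-r|\to0$ and $\alpha(y_n)\to 0$; the first forces $y_n\to r$, and continuity of $\alpha$ then gives $\alpha(r)=0$, contradicting $\alpha\in\PD$.

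For part \ref{itm:KinfLipschitzLowerEstimate-itm2}, assume $\alpha\in\K$. The key reduction exploits monotonicity of $\alpha$: for any $y\ge r$ one has $\alpha(y)+L(y-r)\ge\alpha(r)=\alpha(r)+L|r-r|$, so the infimum can be taken over $y\in[0,r]$, and we may write
\[
  \rho(r) = L r + G(r), \qquad G(r) := \inf_{0\le y\le r}\bigl(\alpha(y)-L y\bigr).
\]
Given $r_1<r_2$, I observe $G(r_2)=\min\{G(r_1),I\}$ with $I := \inf_{y\in(r_1,r_2]}(\alpha(y)-L y)$. Because $\alpha$ is strictly increasing, for every $y\in(r_1,r_2]$ we have $\alpha(y)-L y>\alpha(r_1)-L r_2$; a compactness argument on the closure $[r_1,r_2]$, combined with continuity of $\alpha$ at $r_1$, upgrades this to $I>\alpha(r_1)-L r_2 \ge G(r_1)-L(r_2-r_1)$. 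In both cases ($G(r_2)=G(r_1)$ and $G(r_2)=I$) one then obtains $\rho(r_2)-\rho(r_1) = L(r_2-r_1)+G(r_2)-G(r_1)>0$. Together with part \ref{itm:KinfLipschitzLowerEstimate-itm1}, this gives $\rho\in\K$.

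For part \ref{itm:KinfLipschitzLowerEstimate-itm3}, under $\alpha\in\Kinf$ the remaining task is $\rho(r)\to\infty$ as $r\to\infty$. I would split the infimum: for $y\in[0,r/2]$ the term $L|y-r|$ is at least $L r/2$, and for $y\ge r/2$ monotonicity gives $\alpha(y)\ge\alpha(r/2)$; therefore $\rho(r)\ge\min\{L r/2,\alpha(r/2)\}$, and both bounds diverge. Combined with part \ref{itm:KinfLipschitzLowerEstimate-itm2}, this yields $\rho\in\Kinf$. The main obstacle is the strict lower bound $I>G(r_1)-L(r_2-r_1)$ in part \ref{itm:KinfLipschitzLowerEstimate-itm2}: the interval $(r_1,r_2]$ is half-open, so the infimum might not be attained inside it, and one must pass to the compact interval $[r_1,r_2]$ and separately treat the boundary case $y_\ast=r_1$ via continuity of $\alpha$.
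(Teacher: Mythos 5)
Your proof is correct, and for parts (ii) and (iii) it follows a genuinely different route from the paper's. Part (i) is essentially the same idea (the paper extracts an actual minimizer $y^*$ by a coercivity/compactness argument, whereas you use a minimizing sequence; both close the loop in the same way). For part (ii), the paper fixes $r_1>r_2$, takes a minimizer $y_1$ for $\rho(r_1)$, and case-splits on whether $y_1\ge r_2$ or $y_1<r_2$; in the first case it compares $\alpha(y_1)+L|y_1-r_1|$ directly against the crude bound $\rho(r_2)\le\alpha(r_2)$, and in the second it compares against the competitor $\alpha(y_1)+L|y_1-r_2|$ inside the infimum defining $\rho(r_2)$. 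Your argument instead restricts the infimum to $[0,r]$ via monotonicity, peels off the linear part $\rho(r)=Lr+G(r)$ with $G(r)=\inf_{0\le y\le r}(\alpha(y)-Ly)$, and then shows strict monotonicity of $\rho$ by comparing $G(r_1)$ and $G(r_2)$. Both work; the paper's is slightly shorter because it never needs to worry about where the infimum over a half-open interval is located, while yours is more structural (the representation $\rho=L\,\id+G$ with $G$ nonincreasing and $G\ge -L\,\id$ packages the monotonicity and Lipschitz claims in one decomposition). For part (iii), you give an explicit lower bound $\rho(r)\ge\min\{Lr/2,\alpha(r/2)\}$ and let both pieces diverge; the paper instead argues by contradiction, assuming $\rho$ bounded and deriving boundedness of $\alpha$ along a divergent sequence. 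The direct bound is arguably cleaner since it produces a quantitative rate of divergence. One small remark: in the boundary case $y_\ast=r_1$ of your part (ii), what you actually need is continuity of $y\mapsto\alpha(y)-Ly$ on the compact interval (to guarantee a minimizer exists), not continuity of $\alpha$ at $r_1$ per se; once the minimizer over $[r_1,r_2]$ is at $r_1$, the chain $I\ge\alpha(r_1)-Lr_1>\alpha(r_1)-Lr_2\ge G(r_1)-L(r_2-r_1)$ closes the argument without any further appeal to continuity.
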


\begin{proof}
%\sideremark{
	%\ck{
		%Der Beweis scheint mir ziemlich "Standard" zu sein. Vielleicht kann man den streichen.
	%}
	%
	%\mir{
		%Ich würde das lieber lassen. Wir verwenden das mehrmals in verschiedenen papers.
	%}
%}
\ref{itm:KinfLipschitzLowerEstimate-itm1}. Consider $\rho$ given by \eqref{eq:LowerEstimRho_def}. Note that for any $r>0$ it holds that $\alpha(y) + L|y-r|\to \infty$ as $y \to\infty$. Thus, there is $r^*>0$ such that $\rho(r) = \inf_{y\in[0,r^*]} \big\{\alpha(y) + L|y-r|\big\}$, and as $\alpha$ is continuous, there is $y^*=y^*(r)$ such that $\rho(r) = \alpha(y^*) + L|y^*-r|$.%

Clearly, $0\leq \rho(r)\leq \alpha(r)$ for all $r\geq 0$. Assume that $\rho(r)= 0$ for some $r\geq 0$. By the above argument, $\rho(r) = \alpha(r)$, and as $\alpha(r)=0$ if and only if $r=0$, it follows that $\rho(0)=0$ and $\rho(r)>0$ for $r>0$.%

Next, for any $r_1,r_2 \geq 0$ we have by the triangle inequality%
\begin{eqnarray*}
\rho(r_1)-\rho(r_2) 
&=& \inf_{y\geq 0} \big\{\alpha(y) + L|y-r_1|\big\} - \inf_{y\geq 0} \big\{\alpha(y) + L|y-r_2|\big\}\\
&\leq& \inf_{y\geq 0} \big\{\alpha(y) + L|y-r_2| + L|r_2-r_1|\big\} - \inf_{y\geq 0} \big\{\alpha(y) + L|y-r_2|\big\}\\
&=& L|r_2-r_1|.
%\label{eq:LowerEstimRho_def}
\end{eqnarray*}
Similarly, using the triangle inequality for the second term, we obtain%
\begin{equation*}
  \rho(r_1)-\rho(r_2) \geq -L|r_2-r_1|,%
\end{equation*}
and thus $\rho$ is globally Lipschitz with Lipschitz constant $L$, and is of class $\PD$.%

\ref{itm:KinfLipschitzLowerEstimate-itm2}. Let $\alpha\in\K$. Pick any $r_1,r_2 \geq 0$ with $r_1 >r_2$ and let $y_1 \geq 0$ be so that $\rho(r_1) = \alpha(y_1) + |y_1 - r_1|$. Consider the expression%
\begin{equation}
\label{eq:Kinf-lemma-lower-estimate-tmp}
\rho(r_1) - \rho(r_2) = \alpha(y_1) + L|y_1-r_1| - \inf_{y\geq 0} \{\alpha(y) + L|y-r_2|\}.
\end{equation}
If $y_1\geq r_2$, then $\rho(r_1) - \rho(r_2) \geq \alpha(y_1) + L|y_1-r_1|  -\alpha(r_2) >0$, as
 $\alpha$ is increasing.

If $y_1<r_2$, then 
\begin{eqnarray*}
\rho(r_1) - \rho(r_2) &\geq& \alpha(y_1) + L|y_1-r_1| - \big(\alpha(y_1) + L|y_1 - r_2|\big) = L(r_1-r_2)>0.
\end{eqnarray*}

%\ref{itm:KinfLipschitzLowerEstimate-itm2}. Let $\alpha\in\K$. Pick any $r_1,r_2 \geq 0$ with $r_1 >r_2$ and let $y_2 \geq 0$ be so that $\rho(r_2) = \alpha(y_2) + |y_2 - r_2|$. Consider the expression%
%\begin{equation*}
%\rho(r_1) - \rho(r_2) = \inf_{y\geq 0} \{\alpha(y) + L|y-r_1|\} - \big(\alpha(y_2) + L|y_2 - r_2|\big).%
%\end{equation*}
%Letting $y=y_1:=y_2 + r_1 - r_2>y_2$ in the above estimates \textcolor{red}{[which estimates?]}, we obtain%
%\begin{eqnarray*}
%\rho(r_1) - \rho(r_2) &>& \alpha(y_1) + L|y_1-r_1| - \big(\alpha(y_2) + L|y_2 - r_2|\big) \\
%&=& \alpha(y_1) - \alpha(y_2).%
%\end{eqnarray*}
%\textcolor{red}{[The inequality above seems to be wrong!]}

\ref{itm:KinfLipschitzLowerEstimate-itm3}. Let $\alpha\in\Kinf$.
Assume to the contrary that $\rho$ is bounded: $\rho(r) \leq M$ for all $r$. Then for every $r$ there is $r'$ with $\alpha(r') + L|r - r'| \leq 2M$. Looking at the second term, we see that $r' \rightarrow \infty$ as $r \rightarrow \infty$. But then $\alpha(r') \rightarrow \infty$, a contradiction. \qed
\end{proof}

Items (ii) and (iii) of the following elementary lemma are variations of \cite[Lem.~1.1.5]{Rue07} and \cite[Lem.~1.1.3, item 1]{Rue07}, respectively.%

\begin{lemma}\label{lem:uSGC-lemma3}
\begin{enumerate}
\item[(i)] For any $\alpha\in\Kinf$ there is $\eta\in\Kinf$ such that $\eta(r)\leq\alpha(r)$ for all $r\geq 0$, and $\id-\eta\in\Kinf$.
\item[(ii)] For any $\eta\in\Kinf$ with $\id - \eta\in\Kinf$, there is $\rho\in\Kinf$ such that $(\id-\eta)^{-1} = \id + \rho$.%
\item[(iii)] For any $\eta\in\Kinf$ such that $\id - \eta\in\Kinf$ there are $\eta_1,\eta_2\in\Kinf$ such that $\id-\eta_1,\id-\eta_2\in\Kinf$ and $\id-\eta = (\id-\eta_1) \circ (\id-\eta_2)$.
\end{enumerate}
\end{lemma}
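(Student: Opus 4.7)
The three items are of an increasingly constructive flavour, and each builds on tools already developed in the preceding lemma.

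For \emph{item (i)}, the natural idea is to apply Lemma~\ref{lem:KinfLipschitzLowerEstimate}\ref{itm:KinfLipschitzLowerEstimate-itm3} to $\alpha$ with Lipschitz constant $L \in (0,1)$, say $L = 1/2$. This produces $\eta \in \Kinf$ with $\eta \le \alpha$ pointwise. Since $\eta$ is then globally Lipschitz with constant $L < 1$, for any $0 \le r_1 < r_2$ we have
\begin{equation*}
  (\id-\eta)(r_2) - (\id-\eta)(r_1) \ge (r_2-r_1) - L(r_2-r_1) = (1-L)(r_2-r_1) > 0,
\end{equation*}
so $\id - \eta$ is strictly increasing and $(\id-\eta)(r) \ge (1-L)r \to \infty$, giving $\id - \eta \in \Kinf$.

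For \emph{item (ii)}, since $\id - \eta \in \Kinf$ is a continuous strictly increasing bijection $\R_+ \to \R_+$, its inverse $f := (\id - \eta)^{-1}$ is again in $\Kinf$. I define $\rho := f - \id$, which satisfies the required identity $(\id - \eta)^{-1} = \id + \rho$ by construction. Writing $s = f(r)$, so $r = s - \eta(s)$, yields the compact formula $\rho(r) = \eta(f(r))$. From this and $f \in \Kinf$, $\eta \in \Kinf$, the membership $\rho \in \Kinf$ (continuity, vanishing at zero, strict monotonicity, unboundedness) is immediate.

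For \emph{item (iii)}, the plan is to split the ``loss'' $\eta$ equally. Set $\eta_2 := \eta/2$; then $\eta_2 \in \Kinf$, and $\id - \eta_2 = \tfrac12(\id - \eta) + \tfrac12 \id$ is a sum of two $\Kinf$-functions, hence lies in $\Kinf$. Let $h := (\id - \eta_2)^{-1} \in \Kinf$ and define $\eta_1(s) := \tfrac12 \eta(h(s))$, which is in $\Kinf$ as a composition and scalar multiple. A direct computation setting $s = (\id-\eta_2)(r)$ (so $h(s) = r$) gives
\begin{equation*}
  (\id - \eta_1)\!\circ\!(\id - \eta_2)(r) = (\id - \eta_1)(s) = s - \tfrac{1}{2}\eta(r) = r - \tfrac{1}{2}\eta(r) - \tfrac{1}{2}\eta(r) = (\id - \eta)(r).
\end{equation*}
Finally, $\id - \eta_1 = (\id - \eta) \circ (\id - \eta_2)^{-1}$ is a composition of two $\Kinf$-functions, hence in $\Kinf$. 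The main obstacle I anticipate is verifying the last membership cleanly without a clumsy case analysis, but the identity $\id - \eta_1 = (\id - \eta) \circ h$ sidesteps this entirely, since both factors on the right are already in $\Kinf$. \qed
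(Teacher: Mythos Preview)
Your proof is correct and follows essentially the same approach as the paper's: item (i) uses Lemma~\ref{lem:KinfLipschitzLowerEstimate} with a Lipschitz constant $L\in(0,1)$ and the estimate $(1-L)(r_2-r_1)$; item (ii) produces $\rho = \eta\circ(\id-\eta)^{-1}$, which is exactly the paper's definition; and item (iii) uses the same splitting $\eta_2 = \tfrac12\eta$, $\eta_1 = \tfrac12\eta\circ(\id-\eta_2)^{-1}$. Your write-up in (iii) is in fact more complete than the paper's (which leaves the verification as ``a direct calculation''), and your clean observation that $\id-\eta_1 = (\id-\eta)\circ(\id-\eta_2)^{-1}$ is a composition of $\Kinf$-functions is a nice way to get $\id-\eta_1\in\Kinf$ without further work.
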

%
%\sideremark{
	%\ck{Mir scheint, dass wir Teil (iii) des Lemmas überhaupt nicht verwenden.}
	%
	%\mir{Wenn wir das auch in den anderen Papers nicht brauchen, dann löschen wir das.}
%}

\begin{proof}
(i) Take any $L\in(0,1)$ and construct $\rho\in\Kinf$, globally Lipschitz with\linebreak Lipschitz constant $L$ as in Lemma~\ref{lem:KinfLipschitzLowerEstimate}. Clearly, $(\id - \rho)(0) = 0$, and $\id - \rho$ is continuous. For $r,s\geq 0$ with $r>s$, we have%
\begin{align*}
  r-\rho(r) - (s-\rho(s)) &= r-s - (\rho(r)-\rho(s))\geq r-s -L(r-s) \\
	&= (1-L)(r-s)>0,%
\end{align*}
and thus $\id-\rho$ is increasing. 
Furthermore, $r-\rho(r)\geq (1-L)r\to \infty$ as $r\to\infty$, and thus $\id-\rho\in\Kinf$.%

(ii) Define $\rho:=\eta\circ(\id-\eta)^{-1}$. As $\rho$ is a composition of $\Kinf$-functions, $\rho\in\Kinf$. It holds that 
$(\id + \rho)\circ(\id - \eta)= \id - \eta + \eta\circ(\id-\eta)^{-1}\circ(\id - \eta) = \id - \eta + \eta = \id$, and thus $\id + \rho = (\id-\eta)^{-1}$.%

(iii) Choose $\eta_2:=\frac{1}{2}\eta$ and $\eta_1:=\frac{1}{2}\eta\circ (\id-\eta_2)^{-1}$. A direct calculation shows the claim. \qed
\end{proof}
}

Now we give a criterion for the robust strong small-gain condition.%

\begin{proposition}\label{prop:Criterion-robust-strong-SGC} 
A nonlinear operator $A:\ell_{\infty}^+(I) \to \ell_{\infty}^+(I)$ satisfies the robust strong small-gain condition if and only if there are $\omega,\eta\in\Kinf$ and an operator $\vec{\eta}:\ell_{\infty}^+(I) \to \ell_{\infty}^+(I)$, defined by%
\begin{equation}\label{eq:vec-eta}
  \vec{\eta}(x) := (\eta(x_i))_{i\in I} \mbox{\quad for all\ } x\in \ell_{\infty}^+(I),%
\end{equation}
such that for all $k\in I$ it holds that%
\begin{equation}\label{eq:rsSGC-l-infty}
  A(x)\not\geq x - \vec{\eta}(x) - \omega(\|x\|_{\ell_{\infty}(I)}) e_k \mbox{\quad for all\ } x\in \ell_{\infty}^+(I)\setminus\{0\}.%
\end{equation}
\end{proposition}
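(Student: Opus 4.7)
The plan is to first rewrite the robust strong small-gain condition from Definition~\ref{def:SGC}\ref{NL-SGC-def-item4} in a form directly comparable to~\eqref{eq:rsSGC-l-infty}. Given $\rho\in\Kinf$ from the definition, setting $\eta:=\rho\circ(\id+\rho)^{-1}$ yields $\eta\in\Kinf$ with $\id-\eta=(\id+\rho)^{-1}\in\Kinf$ by a short direct calculation, and conversely Lemma~\ref{lem:uSGC-lemma3}(ii) shows that every $\eta\in\Kinf$ with $\id-\eta\in\Kinf$ arises in this way from a unique $\rho\in\Kinf$. Because $D_\rho$ acts componentwise, $D_\rho\circ A_{i,j}(x)\geq x$ is equivalent to $A_{i,j}(x)\geq x-\vec\eta(x)$, that is to $A(x)\geq x-\vec\eta(x)-\omega(x_j)e_i$. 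Hence Definition~\ref{def:SGC}\ref{NL-SGC-def-item4} is equivalent to the existence of $\omega,\eta\in\Kinf$ with $\omega<\id$ and $\id-\eta\in\Kinf$ satisfying
\begin{equation}\label{eq:reformulated-rsSGC-plan}
  A(x)\not\geq x-\vec\eta(x)-\omega(x_j)e_i \quad\text{for all } i,j\in I \text{ and all } x\in\ell_\infty^+(I)\setminus\{0\}.
\end{equation}

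For the implication~\eqref{eq:rsSGC-l-infty}$\Rightarrow$\eqref{eq:reformulated-rsSGC-plan}, I will apply Lemma~\ref{lem:uSGC-lemma3}(i) twice to replace the functions $\omega,\eta$ from~\eqref{eq:rsSGC-l-infty} by $\omega',\eta'\in\Kinf$ with $\omega'\leq\omega$, $\eta'\leq\eta$, and $\id-\omega',\id-\eta'\in\Kinf$; in particular $\omega'<\id$. Since then $\omega'(x_j)\leq\omega(\|x\|_{\ell_\infty(I)})$ and $\vec{\eta'}(x)\leq\vec\eta(x)$ for every $x\in\ell_\infty^+(I)$, any $x$ violating~\eqref{eq:reformulated-rsSGC-plan} for some pair $(i,j)$ would also violate~\eqref{eq:rsSGC-l-infty} with $k=i$, contrary to the assumption.

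For the converse~\eqref{eq:reformulated-rsSGC-plan}$\Rightarrow$\eqref{eq:rsSGC-l-infty}, I would take $\omega':=\omega/2$ and $\eta':=\eta$ and argue by contradiction. Suppose some $k\in I$ and nonzero $x\in\ell_\infty^+(I)$ satisfy $A(x)\geq x-\vec\eta(x)-(\omega(\|x\|_{\ell_\infty(I)})/2)e_k$. Then for each $j\in I$, \eqref{eq:reformulated-rsSGC-plan} with $i=k$ produces an index $p(j)\in I$ where $A(x)_{p(j)}<x_{p(j)}-\eta(x_{p(j)})-\omega(x_j)\delta_{p(j),k}$. If $p(j)\neq k$ for some $j$, this contradicts the componentwise lower bound $A(x)_{p(j)}\geq x_{p(j)}-\eta(x_{p(j)})$ coming from our contradiction hypothesis. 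Hence $p(j)=k$ for every $j$, so $A(x)_k<x_k-\eta(x_k)-\omega(x_j)$ for every $j\in I$. Picking $j_n\in I$ with $x_{j_n}\to\|x\|_{\ell_\infty(I)}$ and letting $n\to\infty$ using continuity of $\omega$ yields $A(x)_k\leq x_k-\eta(x_k)-\omega(\|x\|_{\ell_\infty(I)})$; combined with the hypothesis this forces $\omega(\|x\|_{\ell_\infty(I)})\leq 0$, contradicting $\|x\|_{\ell_\infty(I)}>0$.

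The main obstacle lies in the last step: the supremum $\|x\|_{\ell_\infty(I)}$ need not be attained at any coordinate of $x$, so one cannot simply plug in an \q{optimal} index $j$ into~\eqref{eq:reformulated-rsSGC-plan}. Continuity of $\omega$ together with a sequential approximation bridges this gap, while the strict factor $1/2$ built into the choice $\omega':=\omega/2$ provides the quantitative slack needed to produce the final strict contradiction.
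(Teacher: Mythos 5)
Your proof is correct and follows the same structural outline as the paper's: both first recast the robust strong small-gain condition componentwise via the bijection between $\rho$ and $\eta$ satisfying $(\id+\rho)^{-1} = \id - \eta$, and then compare that reformulation with \eqref{eq:rsSGC-l-infty}. Your derivation of the componentwise form from \eqref{eq:rsSGC-l-infty} matches the paper's \q{$\Leftarrow$} direction essentially verbatim (apply Lemma~\ref{lem:uSGC-lemma3}(i) to shrink $\eta$, then use $\omega(x_j) \le \omega(\|x\|_{\ell_\infty(I)})$); in fact you are slightly more careful than the paper, since you also shrink $\omega$ so as to guarantee $\omega' < \id$, a requirement that is part of the definition of the robust strong small-gain condition but is left implicit in the paper's proof. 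Where the two arguments genuinely diverge is in the other direction. The paper observes that for any $x$ there exists $j$ with $x_j \ge \tfrac12\|x\|_{\ell_\infty(I)}$ and plugs that single index into the componentwise condition, which immediately yields \eqref{eq:rsSGC-l-infty} with $\omega$ replaced by $r \mapsto \omega(r/2)$. You instead assume the negation with $\omega/2$, show that the witnessing index $p(j)$ must equal $k$ for every $j$, and then pass to the limit along a sequence $j_n$ with $x_{j_n} \to \|x\|_{\ell_\infty(I)}$, using continuity of $\omega$ and the slack built into the factor $1/2$ to close the contradiction. Both routes address the same obstacle (the supremum norm need not be attained by any coordinate), but the paper's half-maximal-index trick sidesteps the sequential limit and is correspondingly shorter, whereas your argument makes the role of non-attainment more explicit and is equally rigorous.
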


\begin{proof}
\q{$\Rightarrow$}: Let the robust strong small-gain condition hold with corresponding {$\rho,\omega$ and $D_{\rho}$}. Then for any $x = (x_i)_{i\in I} \in \ell_{\infty}^+(I)\setminus\{0\}$ and any $j,k\in I$, it holds that%
\begin{equation}\label{eq:rsSGC-l-infty-componentwise-1}
  \exists i\in I:\quad \big[{D_{\rho}}\big(A(x) + \omega(x_j) e_k\big)\big]_i = (\id + \rho)\big([A(x) + \omega(x_j) e_k]_i\big) < x_i.%
\end{equation}
As $\rho\in\Kinf$, there is $\eta\in\Kinf$ such that $\id-\eta = (\id + \rho)^{-1}\in\Kinf$, which can be shown as in {Lemma~\ref{lem:uSGC-lemma3}(ii)}. Thus, \eqref{eq:rsSGC-l-infty-componentwise-1} is equivalent to%
\begin{equation}\label{eq:rsSGC-l-infty-componentwise}
  \exists i\in I:\quad A(x)_i < x_i - \eta(x_i) - \big[\omega( x_j) e_k\big]_i.%
\end{equation}
As for each $x \in \ell_{\infty}^+(I)$ there is $j\in I$ such that $x_j \geq \frac{1}{2}\|x\|_{\ell_{\infty}(I)}$, the condition \eqref{eq:rsSGC-l-infty-componentwise} with this particular $j$ implies that%
\begin{align*}
\exists i\in I:\quad A(x)_i &< x_i - \eta(x_i) - \Big[\omega\big(\frac{1}{2} \|x\|_{\ell_{\infty}(I)}\big) e_k\Big]_i
													= \Big[x - \vec{\eta}(x) - \omega\big(\frac{1}{2} \|x\|_{\ell_{\infty}(I)}\big) e_k\Big]_i,%
\end{align*}
which is up to the constant the same as \eqref{eq:rsSGC-l-infty}.%

\q{$\Leftarrow$}: Let \eqref{eq:rsSGC-l-infty} hold with a certain $\eta_1\in\Kinf$ and a corresponding $\vec{\eta}_1$. By {Lemma~\ref{lem:uSGC-lemma3}(i)}, one can choose $\eta\in\Kinf$, such that $\eta\leq \eta_1$ and $\id-\eta\in\Kinf$. Then \eqref{eq:rsSGC-l-infty} holds with this $\eta$ and a corresponding $\vec{\eta}$, i.e., for all $k\in I$ we have%
\begin{equation*}
  \exists i\in I:\quad A(x)_i < x_i - \eta(x_i) - \big[\omega( \|x\|_{\ell_{\infty}(I)}) e_k\big]_i.%
\end{equation*}
As $\|x\|_{\ell_{\infty}(I)}\geq x_j$ for any $j\in I$, this implies that for all $j,k\in I$ it holds that%
\begin{equation*}
  \exists i\in I:\quad A(x)_i < x_i - \eta(x_i) - \big[\omega( x_j) e_k\big]_i,%
\end{equation*}
and thus%
\begin{equation*}
  \exists i\in I:\quad \big[ A(x) + \omega( x_j) e_k\big]_i < (\id-\eta)(x_i).%
\end{equation*}
As $\eta\in\Kinf$ satisfies $\id-\eta\in\Kinf$, by {Lemma~\ref{lem:uSGC-lemma3}(ii)} there is $\rho\in\Kinf$ such that $(\id-\eta)^{-1} = \id + \rho$, and thus for all $j,k\in I$ property \eqref{eq:rsSGC-l-infty-componentwise-1} holds, which shows that $A$ satisfies the robust strong small-gain condition. \qed%
\end{proof}

Specialized to the strong small-gain condition, Proposition~\ref{prop:Criterion-robust-strong-SGC} reads as follows.%

\begin{corollary}\label{cor:Criterion-strong-SGC} 
A nonlinear operator $A:\ell_{\infty}^+(I) \to \ell_{\infty}^+(I)$ satisfies the strong small-gain condition if and only if there are $\eta\in\Kinf$ and an operator $\vec{\eta}:\ell_{\infty}^+(I) \to \ell_{\infty}^+(I)$, defined via \eqref{eq:vec-eta} such that%
\begin{equation*}
  A(x)\not\geq x - \vec{\eta}(x) \mbox{\quad for all\ } x\in \ell_{\infty}^+(I)\setminus\{0\}.%
\end{equation*}
\end{corollary}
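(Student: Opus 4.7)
The plan is to mimic the proof of Proposition~\ref{prop:Criterion-robust-strong-SGC}, simply dropping every occurrence of the robustness perturbation $\omega(\|x\|_{\ell_\infty(I)})e_k$ and the associated index $k$. The technical engine is again the pair of inversion/regularization facts collected in Lemma~\ref{lem:uSGC-lemma3}, namely that for $\eta \in \Kinf$ with $\id - \eta \in \Kinf$ there exists $\rho \in \Kinf$ with $(\id - \eta)^{-1} = \id + \rho$, and conversely that any $\rho \in \Kinf$ arises in this way. This is what translates a componentwise $(\id + \rho)$-strict-decrease condition into a componentwise $(\id - \eta)$-strict-decrease condition, and vice versa.

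For the direction \q{$\Rightarrow$}, I would start from the strong small-gain condition: for every $x \in \ell_\infty^+(I) \setminus \{0\}$ there exists $i \in I$ with $(\id + \rho)(A(x)_i) < x_i$. Setting $\eta \in \Kinf$ so that $\id - \eta = (\id + \rho)^{-1}$ (which exists by Lemma~\ref{lem:uSGC-lemma3}(ii)), this is equivalent to $A(x)_i < x_i - \eta(x_i)$ for some $i$. With the notation of \eqref{eq:vec-eta}, this is precisely $A(x) \not\geq x - \vec{\eta}(x)$.

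For the direction \q{$\Leftarrow$}, I would begin with the assumed $\eta_1 \in \Kinf$ satisfying $A(x) \not\geq x - \vec{\eta}_1(x)$. Apply Lemma~\ref{lem:uSGC-lemma3}(i) to replace $\eta_1$ by a smaller $\eta \in \Kinf$ with $\id - \eta \in \Kinf$; the inequality $A(x) \not\geq x - \vec{\eta}(x)$ still holds because $\vec{\eta} \leq \vec{\eta}_1$ componentwise. Then for every $x \neq 0$ there is $i$ with $A(x)_i < (\id - \eta)(x_i)$. Invoking Lemma~\ref{lem:uSGC-lemma3}(ii), choose $\rho \in \Kinf$ with $(\id - \eta)^{-1} = \id + \rho$, so that $(\id + \rho)(A(x)_i) < x_i$, i.e.\ $D_\rho \circ A(x) \not\geq x$, which is the strong small-gain condition.

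I do not anticipate a substantive obstacle: the corollary is essentially the $\omega = 0$ case of Proposition~\ref{prop:Criterion-robust-strong-SGC}, and both implications are already contained in the argument given there. The only point requiring care is the intermediate regularization step that ensures $\id - \eta \in \Kinf$ (so that the inverse $\id + \rho$ is available), but this is routine thanks to Lemma~\ref{lem:uSGC-lemma3}(i). The proof can therefore be presented very concisely, either in two or three lines or simply by pointing to Proposition~\ref{prop:Criterion-robust-strong-SGC} and noting that the $\omega$-terms play no role in that argument when they are set to zero.
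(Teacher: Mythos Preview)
Your proposal is correct and matches the paper's approach: the paper presents this corollary without proof, as the specialization of Proposition~\ref{prop:Criterion-robust-strong-SGC} obtained by dropping the $\omega$-terms, and your write-up simply makes that specialization explicit using the same ingredients (Lemma~\ref{lem:uSGC-lemma3}(i)--(ii)) in the same order.
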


The next proposition shows that the uniform small-gain condition is at least not weaker than the robust strong small-gain condition.%

{
\begin{proposition}\label{prop:uSGC-implies-sSGC} 
Let $A:\ell_{\infty}^+(I)\to \ell_{\infty}^+(I)$ be a nonlinear operator. If $A$ satisfies the uniform small-gain condition, then $A$ satisfies the robust strong small-gain condition.
\end{proposition}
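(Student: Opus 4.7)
The plan is to translate both small-gain conditions into their componentwise reformulations and then observe that the uniform condition already provides an index that witnesses the robust strong condition uniformly over the auxiliary parameter $k$. Since $\ell_\infty^+(I)$ has nonempty interior, I would first apply Proposition~\ref{prop:criteria-MBI-with-unit} with the canonical interior point $z=\mathbf{1}$ to rewrite the uniform small-gain condition as: there exists $\eta_0\in\Kinf$ such that for every $x\in\ell_\infty^+(I)\setminus\{0\}$ there is an index $i=i(x)\in I$ with $A(x)_i < x_i - \eta_0(\|x\|_{\ell_\infty(I)})$. Crucially, this index depends only on $x$. On the other hand, by Proposition~\ref{prop:Criterion-robust-strong-SGC}, the robust strong small-gain condition amounts to producing $\omega,\eta\in\Kinf$ such that for every $k\in I$ and every $x\in\ell_\infty^+(I)\setminus\{0\}$ some index $i\in I$ realizes
\begin{equation*}
  A(x)_i < x_i - \eta(x_i) - \omega(\|x\|_{\ell_\infty(I)})[e_k]_i.
\end{equation*}

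The natural candidate is $\eta:=\eta_0/2$ and $\omega:=\eta_0/2$, both in $\Kinf$. For a given $x\neq 0$ and $k\in I$, I would use the same index $i(x)$ supplied by the uniform condition as the witness and split into the trivial cases $i(x)\neq k$ and $i(x)=k$. In the first case $[e_k]_{i(x)}=0$, and since $\eta(x_{i(x)})\leq\eta_0(\|x\|_{\ell_\infty(I)})$, the target inequality follows directly from the uSGC bound. In the second case $[e_k]_{i(x)}=1$, and the sum $\eta(x_{i(x)})+\omega(\|x\|_{\ell_\infty(I)})$ is bounded above by $\eta_0(\|x\|_{\ell_\infty(I)})/2 + \eta_0(\|x\|_{\ell_\infty(I)})/2 = \eta_0(\|x\|_{\ell_\infty(I)})$, so again the uSGC bound yields the desired strict inequality.

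The proof contains no real obstacle: the whole argument rests on the fact that the uniform small-gain condition, when expressed with the unit vector as in Proposition~\ref{prop:criteria-MBI-with-unit}, produces a single violating coordinate $i(x)$ that is independent of the auxiliary index $k$ appearing in the robust strong condition. This is precisely what makes the "uniform" version strong enough to absorb both the pointwise term $\vec{\eta}(x)$ and the extra one-coordinate perturbation $\omega(\|x\|)e_k$, after splitting $\eta_0$ into two halves. The only point worth double-checking is that $\eta_0/2$ remains in $\Kinf$ (clear) and that the argument does not require any additional property such as $\omega<\id$, since the characterization in Proposition~\ref{prop:Criterion-robust-strong-SGC} is stated without that extra restriction.
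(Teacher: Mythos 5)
Your proposal is correct and follows essentially the same route as the paper's proof. Both start by passing through Proposition~\ref{prop:criteria-MBI-with-unit} with $z = \mathbf{1}$ to obtain the uniform small-gain condition in unit-vector form, and both then split the slack $\eta_0(\|x\|_{\ell_\infty(I)})$ into two pieces so that it simultaneously dominates the pointwise term $\vec{\eta}(x)$ (using $x_i \leq \|x\|_{\ell_\infty(I)}$ and monotonicity of $\eta_0$) and the extra one-coordinate term $\omega(\|x\|_{\ell_\infty(I)})e_k$, before invoking Proposition~\ref{prop:Criterion-robust-strong-SGC}. The only cosmetic difference is that the paper performs the domination as a single vector inequality $\frac{1}{2}\eta(\|x\|_{\ell_\infty(I)})\mathbf{1} \geq \frac{1}{4}\vec{\eta}(x) + \frac{1}{4}\eta(\|x\|_{\ell_\infty(I)})e_k$ (hence no case split and slightly different constants), while you argue componentwise with a split on whether $i(x) = k$; your final remark that $\omega < \id$ need not be verified here because the characterization in Proposition~\ref{prop:Criterion-robust-strong-SGC} already absorbs that normalization is also correct.
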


\begin{proof}
As $A$ satisfies the uniform small-gain condition with $\eta$, from the proof of Proposition~\ref{prop:criteria-MBI-with-unit} with $z:= \bf 1$, we see that for all $x \in \ell_{\infty}^+(I) \setminus \{0\}$%
\begin{equation*}
  A(x) \not\geq x - \frac{1}{2\|{\bf 1}\|_{\ell_{\infty}(I)}}\eta(\|x\|_{\ell_{\infty}(I)}){\bf 1} 
	= x - \frac{1}{2}\eta(\|x\|_{\ell_{\infty}(I)}){\bf 1}.%
\end{equation*}
For any $x\in \ell_{\infty}^+(I)$ and any $k\in I$, it holds that%
\begin{align*}
\frac{1}{2}\eta(\|x\|_{\ell_{\infty}(I)}){\bf 1}
&=\frac{1}{4}\eta(\|x\|_{\ell_{\infty}(I)}){\bf 1} + \frac{1}{4}\eta(\|x\|_{\ell_{\infty}(I)}){\bf 1}\\
&\ge \frac{1}{4}\vec{\eta}(x) + \frac{1}{4}\eta(\|x\|_{\ell_{\infty}(I)})e_k,%
\end{align*}
and by Proposition~\ref{prop:Criterion-robust-strong-SGC}, $A$ satisfies the robust strong small-gain condition. \qed%
\end{proof}
}

%\mir{Old formulation of Proposition~\ref{prop:uSGC-implies-sSGC} with linear $\omega$ is in the comments}
%\begin{proposition}\label{prop:uSGC-implies-sSGC} 
%Let $A:\ell_{\infty}^+(I)\to \ell_{\infty}^+(I)$ be a nonlinear operator. If $A$ satisfies the uniform small-gain condition with a linear $\eta$, then $A$ satisfies the robust strong small-gain condition.
%\end{proposition}
%
%\begin{proof}
%As $A$ satisfies the uniform small-gain condition with $\eta$, from the proof of Proposition~\ref{prop:criteria-MBI-with-unit} with $z:= \bf 1$, and by linearity of $\eta$ we see that for all $x \in \ell_{\infty}^+(I) \setminus \{0\}$%
%\begin{equation*}
  %A(x) \not\geq x - \frac{1}{2\|{\bf 1}\|_{\ell_{\infty}(I)}}\eta(\|x\|_{\ell_{\infty}(I)}){\bf 1} = x - \frac{1}{2}\eta(1) \|x\|_{\ell_{\infty}(I)}{\bf 1}.%
%\end{equation*}
%For any $x\in \ell_{\infty}^+(I)$ and any $k\in I$ it holds that%
%\begin{align*}
  %\frac{1}{2}\eta(1) \|x\|_{\ell_{\infty}(I)}{\bf 1}
%&= \frac{1}{4}\eta(1) \|x\|_{\ell_{\infty}(I)}{\bf 1}
%+ \frac{1}{4}\eta(1) \|x\|_{\ell_{\infty}(I)}{\bf 1} \\
%&\geq
%\frac{1}{4}\eta(1) x + \frac{1}{4}\eta(1) \|x\|_{\ell_{\infty}(I)}e_k,%
%\end{align*}
%and by Proposition~\ref{prop:Criterion-robust-strong-SGC}, $A$ satisfies the robust strong small-gain condition. \qed%
%\end{proof}

\subsection{The finite-dimensional case}\label{sec:Finite-dimensional systems}

The case of a finite-dimensional $X$ is particularly important as it is a key to the stability analysis of finite networks.%

\begin{proposition}
\label{prop:small-gain-condition-n-dim-general}
%\sideremark{
	%\jg{
		%Hier gibt es, glaube ich, ein Problem mit den Voraussetzungen:
		%Für die die Implikation 
		%``\ref{itm:n-dim-small-gain-criterion-2} $\Rightarrow$ \ref{itm:n-dim-small-gain-criterion-1}''
		%verwenden wir Proposition~\ref{prop_compact_operators};
		%dort wird aber vorausgesetzt, dass $A$ monoton ist.
	%}
%}
Assume that $(X,X^+) = (\R^n,\R^n_+)$ for some $n\in\N$, where $\R^n$ is equipped with the maximum norm $\|\cdot\|$ and $\R^n_+$ denotes the standard positive cone in $\R^n$. Further assume that the operator $A$ is continuous and monotone. Then the following statements are equivalent:%
\begin{enumerate}[label = (\roman*)]
	\item\label{itm:n-dim-small-gain-criterion-1} System \eqref{eq_monotone_system} has the MLIM property.%
	\item\label{itm:n-dim-small-gain-criterion-2} The operator $\id - A$ has the MBI property.%
	\item\label{itm:n-dim-small-gain-criterion-3} The uniform small-gain condition holds: There is an $\eta \in \Kinf$ such that $\dist(A(x) - x,X^+) \geq \eta(\|x\|)$ for all $x \in X^+$.%
	\item\label{itm:n-dim-small-gain-criterion-4} There is an $\eta \in \Kinf$ such that%
\begin{equation*}
  A(x) \not\geq x - \eta(\|x\|){\bf 1} \mbox{\quad for all\ } x \in X^+ \setminus \{0\}.%
\end{equation*}
\end{enumerate}

Additionally, if $A$ is either $\Gamma_\boxplus$ or $\Gamma_\otimes$, then the above conditions are equivalent to%

\begin{enumerate}[label = (\roman*), start = 5]
	\item\label{itm:n-dim-small-gain-criterion-5} $A$ satisfies the robust strong small-gain condition.
	\item\label{itm:n-dim-small-gain-criterion-6} $A$ satisfies the strong small-gain condition.
\end{enumerate}
\end{proposition}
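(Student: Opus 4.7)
The plan is to establish the equivalences (i)--(iv) directly from the propositions of Section~\ref{sec:A uniform small-gain condition and the MBI property} applied in the finite-dimensional setting, then derive (iv) $\Rightarrow$ (v) $\Rightarrow$ (vi) from existing results, and finally close the loop via (vi) $\Rightarrow$ (iv) using the concrete structure of $\Gamma_\otimes$ and $\Gamma_\boxplus$ together with compactness.

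First I would verify the relevant properties of $(\R^n,\R^n_+)$ equipped with the max-norm: the cone is generating (every $x$ decomposes as $x = x^+ - x^-$ with $\|x^\pm\|_\infty \leq \|x\|_\infty$), normal with monotone norm, has the Levi property (a decreasing sequence in $\R^n_+$ is coordinate-wise decreasing and bounded below by $0$, hence norm convergent), and admits $\mathbf{1}$ as an interior point. With these facts, (i) $\Leftrightarrow$ (ii) follows from Proposition~\ref{prop_mlim_implies_mbip} (forward direction) together with Proposition~\ref{prop_compact_operators} (converse, using continuity of $A$ combined with either the Levi property or the automatic compactness of continuous maps on $\R^n$); (ii) $\Leftrightarrow$ (iii) is Proposition~\ref{prop:criteria-MBI-without-unit}; and (iii) $\Leftrightarrow$ (iv) is Proposition~\ref{prop:criteria-MBI-with-unit} with $z = \mathbf{1}$. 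Then (iv) $\Rightarrow$ (v) is Proposition~\ref{prop:uSGC-implies-sSGC} under the identification $\R^n \cong \ell_\infty(\{1,\ldots,n\})$, and (v) $\Rightarrow$ (vi) is immediate from the definitions.

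The remaining implication (vi) $\Rightarrow$ (iv), where the assumption $A \in \{\Gamma_\otimes, \Gamma_\boxplus\}$ is essential, I would handle as follows. By Corollary~\ref{cor:Criterion-strong-SGC} the strong small-gain condition yields $\eta \in \Kinf$ such that
\begin{equation*}
	\phi(x) := \max_{1 \leq i \leq n}\bigl[x_i - A(x)_i - \eta(x_i)\bigr] > 0 \text{\quad for all\ } x \in \R^n_+ \setminus \{0\}.
\end{equation*}
Since $\phi$ is continuous on $\R^n_+$ and each sphere $S_r := \{x \in \R^n_+ : \|x\|_\infty = r\}$ is compact, the function $m(r) := \min_{x \in S_r} \phi(x)$ is strictly positive on $(0,\infty)$. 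A monotonization (taking $r \mapsto \inf_{s \geq r} m(s)$) combined with the lower-bounding device of Lemma~\ref{lem:KinfLipschitzLowerEstimate} then upgrades $m$ to a $\Kinf$-lower bound $\tilde\eta$, at which point (iv) follows from $\max_i[x_i - A(x)_i] \geq \phi(x) \geq \tilde\eta(\|x\|_\infty)$.

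The main obstacle will be verifying the unboundedness $m(r) \to \infty$ as $r \to \infty$, since without it $\tilde\eta$ cannot lie in $\Kinf$. A failure would give a sequence $x^{(k)} \in S_{r_k}$ with $r_k \to \infty$ satisfying $A(x^{(k)})_i \geq x^{(k)}_i - \eta(x^{(k)}_i) - C$ componentwise for a fixed constant $C$. Here the explicit form of $\Gamma_\otimes$ (respectively $\Gamma_\boxplus$) as a componentwise supremum (respectively sum) of scalar $\K$-functions is exploited: combining the monotonicity of $A$ with a rescaling/limit argument, or alternatively appealing to the classical construction of an $\Omega$-path for max-form and sum-form gain operators in finite dimensions as developed in the proofs of \cite[Thm.~8]{DRW07} and in \cite{DRW10,KaJ11}, one derives a contradiction with the strong small-gain condition. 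This produces the required $\Kinf$-lower bound on $m$, yielding (iv) and closing the equivalence chain.
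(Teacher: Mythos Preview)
Your treatment of the equivalences \ref{itm:n-dim-small-gain-criterion-1}--\ref{itm:n-dim-small-gain-criterion-4} and of \ref{itm:n-dim-small-gain-criterion-4} $\Rightarrow$ \ref{itm:n-dim-small-gain-criterion-5} $\Rightarrow$ \ref{itm:n-dim-small-gain-criterion-6} coincides with the paper's: both invoke Propositions~\ref{prop_mlim_implies_mbip}, \ref{prop_compact_operators}, \ref{prop:criteria-MBI-without-unit}, \ref{prop:criteria-MBI-with-unit}, and~\ref{prop:uSGC-implies-sSGC} in the same way.

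The difference is in closing the loop. The paper does not attempt a direct argument; it simply cites \cite[Thm.~6.1]{Rue10} for \ref{itm:n-dim-small-gain-criterion-6} $\Rightarrow$ \ref{itm:n-dim-small-gain-criterion-2}. You instead try to prove \ref{itm:n-dim-small-gain-criterion-6} $\Rightarrow$ \ref{itm:n-dim-small-gain-criterion-4} directly via the function $\phi(x) = \max_i[x_i - A(x)_i - \eta(x_i)]$ and $m(r) = \min_{\|x\|=r}\phi(x)$. The positivity of $m$ on $(0,\infty)$ is fine by compactness, but the step you yourself flag---$m(r) \to \infty$---is a genuine gap. The strong small-gain condition only gives an index $i^*$ with $x_{i^*} - A(x)_{i^*} > \eta(x_{i^*})$, and nothing forces $x_{i^*}$ to be comparable to $\|x\|$; ruling out sequences where the ``witnessing'' coordinate stays small while $\|x^{(k)}\| \to \infty$ is precisely where the structure of $\Gamma_\otimes$ or $\Gamma_\boxplus$ must enter, and doing so is essentially the content of the $\Omega$-path construction. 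Your ``rescaling/limit argument'' is not specified, and your alternative---appealing to \cite{DRW07,DRW10,KaJ11}---is effectively the same external citation the paper makes, just to different sources in the same lineage. So the attempted self-contained route does not actually avoid the black box; both proofs ultimately defer this implication to the existing finite-dimensional theory.
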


\begin{proof}
\ref{itm:n-dim-small-gain-criterion-1} $\Rightarrow$ \ref{itm:n-dim-small-gain-criterion-2}. Follows from Proposition~\ref{prop_mlim_implies_mbip}.

\ref{itm:n-dim-small-gain-criterion-2} $\Iff$ \ref{itm:n-dim-small-gain-criterion-3} $\Iff$ \ref{itm:n-dim-small-gain-criterion-4}. 
Follows from Propositions~\ref{prop:criteria-MBI-with-unit},~\ref{prop:criteria-MBI-without-unit}.

\ref{itm:n-dim-small-gain-criterion-2} $\Rightarrow$ \ref{itm:n-dim-small-gain-criterion-1}. This follows from Proposition \ref{prop_compact_operators} since the cone $\R^n_+$ has the Levi property.

\ref{itm:n-dim-small-gain-criterion-4} $\Rightarrow$ \ref{itm:n-dim-small-gain-criterion-5}. Follows by Proposition~\ref{prop:uSGC-implies-sSGC}. 

\ref{itm:n-dim-small-gain-criterion-5} $\Rightarrow$ \ref{itm:n-dim-small-gain-criterion-6}. Clear.

\ref{itm:n-dim-small-gain-criterion-6} $\Rightarrow$ \ref{itm:n-dim-small-gain-criterion-2}. Follows by \cite[Thm.~6.1]{Rue10}. \qed%
\end{proof}

\begin{remark}
\label{rem:MAFs} 
The class of operators for which the equivalence between \ref{itm:n-dim-small-gain-criterion-1}--\ref{itm:n-dim-small-gain-criterion-4} and \ref{itm:n-dim-small-gain-criterion-5}, \ref{itm:n-dim-small-gain-criterion-6} can be shown, can be made considerably larger using the monotone aggregation functions formalism, see \cite[Thm.~6.1]{Rue10}. However, the proof of this implication in \cite[Lem.~13]{DRW07} uses more structure of the gain operator than merely monotonicity. Thus, the question if this implication is valid for general monotone $A$ is still open.
\end{remark}

{
\begin{remark}
\label{rem:IOS} 
An interesting research direction could be the development of the small-gain theorems for the case when the subsystems obtain the outputs of other subsystems, instead of their full states, as inputs (so-called IOS small-gain theorems). For finite networks, such trajectory-based results have been reported in \cite{JTP94} for couplings of two systems, and in \cite{Rue07,JiW08} for any finite number of finite-dimensional systems. The authors are not aware of such trajectory-based results for networks with infinite-dimensional components and/or infinite networks.
\end{remark}
}

\subsection{Systems with linear gains}
\label{sec:Systems with linear gains}

Here we show that in the case of linear and sup-linear gain operators the MBI and MLIM properties are equivalent and can be characterized via the spectral condition.
%
%In this appendix, we investigate whether the spectral radius condition in Proposition \ref{prop:eISS-criterion-homogeneous-systems} is equivalent to the MBI property of the operator $\id - A$. For general homogeneous subadditive operators, we are not able to answer this question, but we will give a positive answer for the gain operator $\Gamma_{\otimes}:\ell_{\infty}^+(I) \rightarrow \ell_{\infty}^+(I)$ induced by linear gains $\gamma_{ij}$. 
%First, 

%The monotone limit property can easily be inferred from the following property:%

\begin{definition}
\label{def:eISS-discrete-time-inequalities}
Let $(X,X^+)$ be an ordered Banach space. 
System \eqref{eq_monotone_system} is \emph{exponentially input-to-state stable (eISS)} if there are $M\geq 1$, $a\in(0,1)$ and $\gamma \in \Kinf$ such that for every $u \in \ell_{\infty}(\Z_+,X^+)$ and any solution $x(\cdot) = (x(k))_{k\in\Z_+}$ of \eqref{eq_monotone_system} it holds that%
\begin{equation}
\label{eq:eISS-inequalities}
  \|x(k)\|_X \leq M\|x(0)\|_X a^k + \gamma(\|u\|_{\infty}) \mbox{\quad for all\ } k \in \Z_+.%
\end{equation}
\end{definition}

{
%For linear systems we obtain a stronger result, that we use to formulate an efficient small-gain theorem in summation formulation, see Corollary \ref{cor:SGT-sum-linear-gains}.
For linear systems, we obtain the following result, that we use to formulate an efficient small-gain theorem in summation formulation, see Corollary \ref{cor:SGT-sum-linear-gains}.
\begin{proposition}
\label{prop:eISS-criterion-linear-systems}
Let $(X,X^+)$ be an ordered Banach space with a generating and normal cone $X^+$. 
{Let the operator $A:X^+ \to X^+$ be the restriction to $X^+$ of a positive linear operator on $X$.} Then the following statements are equivalent:%
\begin{enumerate}[label = (\roman*)]
\item\label{itm:linear-LIM-criterion-1} System \eqref{eq_monotone_system} is exponentially ISS.
%\item\label{itm:linear-LIM-criterion-2} System \eqref{eq_monotone_system} satisfies the LIM property.%
\item\label{itm:linear-LIM-criterion-3} System \eqref{eq_monotone_system} satisfies the MLIM property.%
\item\label{itm:linear-LIM-criterion-4} The operator $\id-A$ satisfies the MBI property.%
\item\label{itm:linear-LIM-criterion-5} The spectral radius of $A$ satisfies $r(A) < 1$.%
\end{enumerate}
\end{proposition}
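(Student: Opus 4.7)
The plan is to establish the implications in the cycle (i) $\Rightarrow$ (ii) $\Rightarrow$ (iii) $\Rightarrow$ (iv) $\Rightarrow$ (i). The first two steps are essentially free: given the eISS estimate \eqref{eq:eISS-inequalities}, for any constant input $u \equiv w \in X^+$ and any decreasing solution $x(\cdot)$, choosing $N$ large enough so that $M\|x(0)\|_X a^N \le \varepsilon$ immediately yields $\|x(N)\|_X \le \varepsilon + \gamma(\|w\|_X)$, proving the MLIM property with $\xi := \gamma$. The implication (ii) $\Rightarrow$ (iii) is exactly Proposition~\ref{prop_mlim_implies_mbip}.

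For (iv) $\Rightarrow$ (i), my plan is to use the spectral radius formula $r(A) = \lim_{k\to\infty}\|A^k\|^{1/k}$: pick any $a \in (r(A),1)$ and choose $M \geq 1$ with $\|A^k\| \le M a^k$ for all $k \geq 0$. Since $A$ is positive linear and hence monotone, iterating the inequality $x(k+1) \le A(x(k)) + u(k)$ yields
\begin{equation*}
  x(k) \le A^k(x(0)) + \sum_{j=0}^{k-1} A^{k-1-j}(u(j)).
\end{equation*}
By normality of $X^+$, we may assume (after renorming, cf.~\cite[Thm.~2.38]{AlT07}) that $\|\cdot\|_X$ is monotone, which together with the triangle inequality gives $\|x(k)\|_X \le M a^k \|x(0)\|_X + \frac{M}{1-a}\|u\|_{\infty}$, establishing eISS with linear gain.

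The main obstacle is (iii) $\Rightarrow$ (iv). My plan here is to exploit linearity of $A$ via its Neumann partial sums $S_n := \sum_{k=0}^n A^k$, which satisfy the identity $(\id - A)S_n = \id - A^{n+1}$. For any $w \in X^+$, positivity of $A$ ensures $S_n(w) \in X^+$ and $A^{n+1}(w) \in X^+$, so $(\id - A)S_n(w) = w - A^{n+1}(w) \le w$. The MBI property then gives the uniform bound $\|S_n(w)\|_X \le \xi(\|w\|_X)$ for all $n \in \Z_+$ and all $w \in X^+$. Since $X^+$ is generating, we can decompose any $x \in X$ as $x = y - z$ with $y, z \in X^+$ and $\|y\|_X, \|z\|_X \le M \|x\|_X$ via~\eqref{eq_bounded_decomposition}, and hence the partial sums $S_n$ are uniformly bounded as operators on $X$. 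At this point, I invoke the spectral result from~\cite{GlM20} for positive linear operators on ordered Banach spaces with generating and normal cones, which states that such uniform boundedness of Neumann partial sums forces $r(A) < 1$. The non-trivial part here is precisely this passage from uniform boundedness of $\{S_n\}$ to the strict spectral inequality --- in general Banach spaces it would only yield $r(A) \le 1$, and closing the gap requires the additional order-theoretic structure supplied by~\cite{GlM20}.
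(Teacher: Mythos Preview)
Your proof is correct and follows the same cyclic scheme (i) $\Rightarrow$ (ii) $\Rightarrow$ (iii) $\Rightarrow$ (iv) $\Rightarrow$ (i) as the paper, with the same trivial first two steps and the same reliance on \cite{GlM20} for the hard direction. The packaging differs in two places.

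For (iii) $\Rightarrow$ (iv), the paper takes a shorter route: it observes that homogeneity of $A$ allows one to replace the $\Kinf$-function $\xi$ in the MBI property by the linear function $r \mapsto \xi(1)r$ (just scale $v,w$ by $\|w\|_X^{-1}$), and then applies \cite[Thm.~3.3]{GlM20} directly to this linear MBI estimate. Your Neumann-sum argument is a correct alternative that makes the mechanism more transparent --- you explicitly exhibit what MBI buys you (uniform boundedness of $\{S_n\}$) before invoking \cite{GlM20} --- at the cost of a few more lines. For (iv) $\Rightarrow$ (i), the paper appeals to the general Proposition~\ref{prop:eISS-criterion-homogeneous-systems} on homogeneous subadditive operators, which constructs a Lyapunov function $V(x) = \sup_n \eta^n\|A^n(x)\|_X$; your direct iteration of the inequality via $x(k) \le A^k(x(0)) + \sum_{j=0}^{k-1}A^{k-1-j}(u(j))$ is more elementary and entirely adequate in the linear case, since you do not need the Lyapunov function itself.
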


\begin{proof}
%The implications ``\ref{itm:linear-LIM-criterion-1} $\Rightarrow$ \ref{itm:linear-LIM-criterion-2} $\Rightarrow$ \ref{itm:linear-LIM-criterion-3}'' are trivial. 
The implication ``\ref{itm:linear-LIM-criterion-1} $\Rightarrow$ \ref{itm:linear-LIM-criterion-3}'' is trivial. 
By Proposition \ref{prop_mlim_implies_mbip}, \ref{itm:linear-LIM-criterion-3} implies \ref{itm:linear-LIM-criterion-4}. 

\ref{itm:linear-LIM-criterion-4} $\Rightarrow$ \ref{itm:linear-LIM-criterion-5}. It is easy to check that if $A$ is homogeneous of degree one  and $\id - A$ satisfies the MBI property with a certain $\xi\in\Kinf$, then $\id - A$ satisfies the MBI property with $r\mapsto \xi(1)r$ instead of $\xi$. 
The application of \cite[Thm.~3.3]{GlM20} shows \ref{itm:linear-LIM-criterion-5}.

%
%
 %if and only if  It is easy to see that for homogeneous operators $A$ the function
%
%We now prove that \ref{itm:linear-LIM-criterion-4} implies \ref{itm:linear-LIM-criterion-5}.
	%\sideremark{
		%\mir{
		%Eigentlich, kamm man einfach bemerken, dass in diesem Fall in MBI Eigenschaft $\xi$ kann immer linear gewählt werden, und dann ein lineares Paper GlM20 zitieren.
		%Dann sollen wir auch approx. Eigenvektoren nicht einführen, und auch Lemma~\ref{lem:positive-approx-eigenvector}.
		%}
	%}
%Assume towards a contradiction that $r(A) \geq 1$. According to Lemma~\ref{lem:positive-approx-eigenvector}, we can find an approximate eigenvector $(x_n)_{n\in\N} \subseteq X^+$ for the approximate eigenvalue $r(A)$ of $A$ (in particular, $\|x_n\|_X=1$ for all $n$). Then $y_n := (r(A)\id - A)x_n \to 0$ as $n\to\infty$. 
%Since the cone $X^+$ is generating, by \eqref{eq_bounded_decomposition} there exists a number $M > 0$ such that every vector $a \in X$ can be decomposed as $a = b-c$ with $b,c \geq 0$ and $\|b\|_X, \|c\|_X \leq M \|a\|_X$. So we can write each vector $y_n$ as $y_n = z_n - q_n$ with $z_n,q_n \geq 0$ and $\|z_n\|_X, \|q_n\|_X \leq M \|y_n\|_X$. In particular, we obtain for each $n$ that {$(\id - A)x_n \leq (r(A)\id - A)x_n = y_n \leq z_n$}, implying%
%\begin{equation*}
  %\|x_n\|_X \leq \xi(\|z_n\|_X) \leq \xi(M\|y_n\|_X) \to \xi(0) = 0,\  \text{ as } n\to\infty.%
%\end{equation*}
%This is a contradiction since $\|x_n\|_X = 1$ for each $n$.%

\ref{itm:linear-LIM-criterion-5} $\Rightarrow$ \ref{itm:linear-LIM-criterion-1}. Follows from Proposition~\ref{prop:eISS-criterion-homogeneous-systems}.
\qed%
%It remains to prove that \ref{itm:linear-LIM-criterion-5} implies \ref{itm:linear-LIM-criterion-1}. To this end, let $x(\cdot)$ be a solution of%
%\begin{equation}\label{eq_linear_monotone_system}
  %x(k+1) \leq Ax(k) + u(k).%
%\end{equation}
%It is well-known that $r(A)<1$ implies the existence of $M>0$ and $a \in (0,1)$ such that $\|A^k\| \leq Ma^k$ for all $k \geq 0$, see, e.g., \cite[Thm.~2.2, p.~518]{Prz88}. By induction, using the monotonicity of the operator $A$, one shows that%
%\begin{equation*}
  %x(k) \leq A^kx(0) + \sum_{i=0}^{k-1}A^{k-i-1}u(i) \mbox{\quad for all\ } k \geq 0.%
%\end{equation*}
%Applying the norm on both sides and using the normality of the cone $X^+$ yields%
%\begin{align*}
  %\|x(k)\|_X &\leq \delta M a^k \|x(0)\|_X + \delta \sum_{i=0}^{k-1} \|A^{k-i-1}\|  \|u(i)\|_X \\
             %&\leq \delta M a^k \|x(0)\|_X + \delta M \|u\|_{\infty} \sum_{i=0}^{\infty} a^i = \delta M a^k \|x(0)\|_X + \frac{\delta M}{1 - a} \|u\|_{\infty}.%
%\end{align*}
%As $a^k \rightarrow 0$ for $k \rightarrow \infty$, this shows the exponential ISS property of \eqref{eq_linear_monotone_system}.%
\end{proof}

For sup-linear systems, MBI is again equivalent to eISS, and the following holds:
%We close the section with the following criterion.%

%\mir{Add robust strong small-gain condition to the equivalences?}

\begin{proposition}
\label{prop:join-morphism-eISS-criterion}
Assume that the gains $\gamma_{ij}$, $(i,j) \in I^2$, are all linear and that the associated gain operator $\Gamma_{\otimes}$ is well-defined. Then the following statements are equivalent:%
\begin{enumerate}[label = (\roman*)]
\item\label{itm:join-morphism-eISS-criterion-1} The operator $\id - \Gamma_{\otimes}$ satisfies the MBI property.%
\item\label{itm:join-morphism-eISS-criterion-2} There are $\lambda\in(0,1)$ and $s_0 \in \inner(\ell_{\infty}^+(I))$ such that 
\begin{eqnarray}
\Gamma_{\otimes}(s_0) \leq \lambda s_0.
\label{eq:Point-of-strict-decay}
\end{eqnarray}

\item\label{itm:join-morphism-eISS-criterion-3} The spectral radius of $\Gamma_{\otimes}:\ell_{\infty}^+(I) \to \ell_{\infty}^+(I)$ satisfies%
\begin{equation*}
  r(\Gamma_{\otimes}) = \lim_{n\rightarrow\infty} \sup_{s \in \ell_{\infty}^+(I) ,\; \|s\|_{\ell_{\infty}} = 1} \|\Gamma_{\otimes}^n(s)\|_{\ell_{\infty}(I)}^{1/n}
	= \lim_{n\rightarrow\infty} \|\Gamma_{\otimes}^n({\bf 1})\|_{\ell_{\infty}(I)}^{1/n} < 1.%
\end{equation*}
%\sideremark{\ck{Should we omit the sup over $s$ in the spectral radius formula and simply write $\|\Gamma_{\otimes}^n(\bf 1)\|_{\ell_{\infty}(I)}^{1/n}$?}}
\item\label{itm:join-morphism-eISS-criterion-4} The system \eqref{eq_monotone_system} with $A = \Gamma_{\otimes}$ is eISS.%
\item\label{itm:join-morphism-eISS-criterion-5} The system \eqref{eq_monotone_system} with $A = \Gamma_{\otimes}$ has the MLIM property.%
\end{enumerate}
\end{proposition}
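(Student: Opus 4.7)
The plan is to prove the equivalences via the cycle \ref{itm:join-morphism-eISS-criterion-4} $\Rightarrow$ \ref{itm:join-morphism-eISS-criterion-5} $\Rightarrow$ \ref{itm:join-morphism-eISS-criterion-1} $\Rightarrow$ \ref{itm:join-morphism-eISS-criterion-2} $\Rightarrow$ \ref{itm:join-morphism-eISS-criterion-3} $\Rightarrow$ \ref{itm:join-morphism-eISS-criterion-4}. Three links come essentially for free: \ref{itm:join-morphism-eISS-criterion-4} $\Rightarrow$ \ref{itm:join-morphism-eISS-criterion-5} is immediate from the definitions (eISS controls every solution, monotone or not), \ref{itm:join-morphism-eISS-criterion-5} $\Rightarrow$ \ref{itm:join-morphism-eISS-criterion-1} is Proposition~\ref{prop_mlim_implies_mbip}, and \ref{itm:join-morphism-eISS-criterion-3} $\Rightarrow$ \ref{itm:join-morphism-eISS-criterion-4} follows from Proposition~\ref{prop:eISS-criterion-homogeneous-systems} once one notes (cf.~Remark~\ref{rem_lg_sg_operator}) that $\Gamma_\otimes$ with linear gains is homogeneous of degree one and subadditive.

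For \ref{itm:join-morphism-eISS-criterion-2} $\Rightarrow$ \ref{itm:join-morphism-eISS-criterion-3}, I would iterate $\Gamma_\otimes(s_0) \leq \lambda s_0$ via monotonicity and homogeneity to get $\Gamma_\otimes^n(s_0) \leq \lambda^n s_0$ for every $n \in \Z_+$. Since $s_0 \in \inner(\ell_\infty^+(I))$, there exist $a,b>0$ with $a {\bf 1} \leq s_0 \leq b {\bf 1}$, so $\Gamma_\otimes^n({\bf 1}) \leq a^{-1}\Gamma_\otimes^n(s_0) \leq (b/a)\lambda^n {\bf 1}$. Taking $n$-th roots yields $r(\Gamma_\otimes) \leq \lambda < 1$, and the two equivalent expressions for $r(\Gamma_\otimes)$ in \ref{itm:join-morphism-eISS-criterion-3} collapse to $\|\Gamma_\otimes^n({\bf 1})\|_{\ell_\infty(I)}$ via the sandwich $s \leq \|s\|_{\ell_\infty(I)} {\bf 1}$ combined with monotonicity and homogeneity.

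The crucial implication is \ref{itm:join-morphism-eISS-criterion-1} $\Rightarrow$ \ref{itm:join-morphism-eISS-criterion-2}. My plan is a Picard-type iteration for the implicit equation $s = \Gamma_\otimes(s) + {\bf 1}$: set $s^{(0)} := 0$ and $s^{(n+1)} := \Gamma_\otimes(s^{(n)}) + {\bf 1}$. Monotonicity of $\Gamma_\otimes$ makes $(s^{(n)})$ coordinate-wise increasing, and rewriting the recursion as
\[
  (\id - \Gamma_\otimes)(s^{(n)}) = {\bf 1} - (s^{(n+1)} - s^{(n)}) \leq {\bf 1},
\]
the MBI property yields the uniform bound $\|s^{(n)}\|_{\ell_\infty(I)} \leq \xi(1) =: C$. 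Coordinate-wise convergence of the bounded monotone sequence then produces a limit $s^* \in \ell_\infty^+(I)$ with ${\bf 1} \leq s^* \leq C {\bf 1}$; in particular, $s^* \in \inner(\ell_\infty^+(I))$ and $C \geq 1$. Because the gains are linear, $\Gamma_\otimes$ has the form $(\Gamma_\otimes(s))_i = \sup_j \gamma_{ij} s_j$ and therefore commutes with monotone suprema; interchanging the two suprema in
\[
  (\Gamma_\otimes(s^*))_i = \sup_j \gamma_{ij} \sup_n s^{(n)}_j = \sup_n (\Gamma_\otimes(s^{(n)}))_i
\]
lets me pass to the limit in the recursion and obtain $\Gamma_\otimes(s^*) = s^* - {\bf 1}$. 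Using ${\bf 1} \geq C^{-1} s^*$, this rearranges to $\Gamma_\otimes(s^*) \leq (1 - C^{-1}) s^*$, so $s_0 := s^*$ and $\lambda := 1 - C^{-1} \in [0,1)$ furnish a point of strict decay (replacing $\lambda$ by, e.g., $1/2$ in the degenerate case $C = 1$, where then $s^* = {\bf 1}$ and $\Gamma_\otimes(s^*) = 0$).

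The main obstacle is the limit passage $\Gamma_\otimes(s^{(n)}) \to \Gamma_\otimes(s^*)$: $\Gamma_\otimes$ is not assumed to be continuous in the norm topology, and the interchange of suprema is precisely where the linearity of the gains enters essentially — it is what turns $\Gamma_\otimes$ into a join-morphism on the cone and permits a monotone-convergence argument without further hypotheses. Once that step is established, the remaining arguments (the a priori MBI bound and the sandwich producing strict decay) are routine.
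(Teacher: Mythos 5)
Your proof is correct and covers all five implications. The links \ref{itm:join-morphism-eISS-criterion-4} $\Rightarrow$ \ref{itm:join-morphism-eISS-criterion-5} $\Rightarrow$ \ref{itm:join-morphism-eISS-criterion-1}, as well as \ref{itm:join-morphism-eISS-criterion-2} $\Rightarrow$ \ref{itm:join-morphism-eISS-criterion-3} $\Rightarrow$ \ref{itm:join-morphism-eISS-criterion-4}, match the paper (which routes \ref{itm:join-morphism-eISS-criterion-3}$\Leftrightarrow$\ref{itm:join-morphism-eISS-criterion-4} through Proposition~\ref{prop:eISS-criterion-homogeneous-systems}, and proves \ref{itm:join-morphism-eISS-criterion-2} $\Rightarrow$ \ref{itm:join-morphism-eISS-criterion-3} by the same ``interior point dominates the unit ball'' sandwich). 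The genuinely different step is \ref{itm:join-morphism-eISS-criterion-1} $\Rightarrow$ \ref{itm:join-morphism-eISS-criterion-2}. The paper first passes from MBI to the uniform small-gain condition (Propositions~\ref{prop:criteria-MBI-without-unit}, \ref{prop:criteria-MBI-with-unit}), uses homogeneity to make $\eta$ linear, shows that $(1+\ep)\Gamma_\otimes$ satisfies the robust strong small-gain condition (Proposition~\ref{prop:uSGC-implies-sSGC}), and then invokes the Kleene-star/strong-transitive-closure operator $Q^\ep(s) = \sup_{k}(1+\ep)^k\Gamma_\otimes^k(s)$ from Lemma~\ref{lem_Q} to produce $s_0 := Q^\ep(\mathbf{1})$. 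Your route bypasses all of this: you run the monotone Picard iteration $s^{(n+1)} = \Gamma_\otimes(s^{(n)}) + \mathbf{1}$, use the MBI inequality with $v = s^{(n)}$ and $w = \mathbf{1}$ to get the uniform a priori bound $\|s^{(n)}\|\le\xi(1) =: C$, pass to the monotone limit coordinate-wise (justified because the max-linear $\Gamma_\otimes$ commutes with increasing suprema), and read off $\Gamma_\otimes(s^*) = s^* - \mathbf{1} \le (1 - C^{-1})s^*$ with $s^* \ge \mathbf{1}$ interior. This is more elementary and self-contained for linear gains; it avoids the detour through small-gain-condition characterizations and the robustness machinery. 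What the paper's route buys is that the Kleene-star construction is phrased for (robustly small-gain) nonlinear $\Gamma_\otimes$ and feeds directly into the construction of paths of strict decay for Lyapunov-based small-gain theorems (see Remark~\ref{rem:Paths-of-strict-decay}), whereas your iteration exploits the additive structure on the right-hand side and so is tied more tightly to the linear/homogeneous case. Both arguments yield a point of strict decay, possibly a different one: your $s^*$ is the ``summation'' fixed point of $s \mapsto \Gamma_\otimes(s)+\mathbf{1}$, the paper's is the ``max'' closure $\sup_k (1+\ep)^k \Gamma_\otimes^k(\mathbf{1})$.

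One small point worth making explicit when you apply MBI: the definition requires both $v$ and $w$ in $X^+$, and you should feed in $w = \mathbf{1}$ (not $w = \mathbf{1}-(s^{(n+1)}-s^{(n)})$, which need not be nonnegative). Since $(\id - \Gamma_\otimes)(s^{(n)}) = \mathbf{1} - (s^{(n+1)}-s^{(n)}) \le \mathbf{1}$ and $\mathbf{1}\in\ell_\infty^+(I)$, the MBI hypothesis is met with $w=\mathbf{1}$, which is what you ultimately use; just make the choice of $w$ explicit.
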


\begin{proof}
By Proposition~\ref{prop:eISS-criterion-homogeneous-systems}, \ref{itm:join-morphism-eISS-criterion-3} is equivalent to \ref{itm:join-morphism-eISS-criterion-4}. Clearly, \ref{itm:join-morphism-eISS-criterion-4} implies \ref{itm:join-morphism-eISS-criterion-5}. By Proposition~\ref{prop_mlim_implies_mbip}, \ref{itm:join-morphism-eISS-criterion-5} implies \ref{itm:join-morphism-eISS-criterion-1}.%

\ref{itm:join-morphism-eISS-criterion-1} $\Rightarrow$ \ref{itm:join-morphism-eISS-criterion-2}. By Proposition \ref{prop:criteria-MBI-without-unit}, the MBI property of $\id - \Gamma_{\otimes}$ is equivalent to the uniform small-gain condition. Then Proposition \ref{prop:criteria-MBI-with-unit} shows that%
\begin{equation*}
  \Gamma_{\otimes}(s) \not\geq s - \eta(\|s\|_{\ell_{\infty}}){\bf 1} \mbox{\quad for all\ } s \in \ell_{\infty}^+(I) \setminus \{0\}%
\end{equation*}
for some $\eta \in \Kinf$. In particular,%
\begin{equation*}
  \Gamma_{\otimes}\Bigl(\frac{s}{\|s\|_{\ell_{\infty}}}\Bigr) \not\geq \frac{s}{\|s\|_{\ell_{\infty}}} - \eta(1){\bf 1} \mbox{\quad for all\ } s \in \ell_{\infty}^+(I) \setminus \{0\}.%
\end{equation*}
Multiplying this inequality by $\|s\|_{\ell_{\infty}}$, putting $\eta := \eta(1)$ and using the homogeneity of degree one of $\Gamma_{\otimes}$ yields%
\begin{equation*}
  \Gamma_{\otimes}(s) \not\geq s - \eta\|s\|_{\ell_{\infty}} {\bf 1} \mbox{\quad for all\ } s \in \ell_{\infty}^+(I) \setminus \{0\}.%
\end{equation*}
Then for any $s \in \ell_{\infty}^+(I)$ we have%
\begin{align*}
  (1 + \ep)\Gamma_{\otimes}(s) &\not\geq (1 + \ep)(s - \eta \|s\|_{\ell_{\infty}} {\bf 1}) = s + \ep s - (1 + \ep) \eta \|s\|_{\ell_{\infty}} {\bf 1}.
\end{align*}
As $s + \ep s - (1 + \ep) \eta \|s\|_{\ell_{\infty}} {\bf 1} \leq s - [(1 + \ep)\eta - \ep] \|s\|_{\ell_{\infty}}{\bf 1}$,
we have
\begin{align*}
  (1 + \ep)\Gamma_{\otimes}(s) &\not\geq s - [(1 + \ep)\eta - \ep] \|s\|_{\ell_{\infty}}{\bf 1}.
\end{align*}
Choosing $\varepsilon>0$ small enough, Proposition \ref{prop:uSGC-implies-sSGC} implies that $(1 + \ep)\Gamma_{\otimes}$ satisfies the robust strong small-gain condition. By Lemma \ref{lem_Q}, the operator%
\begin{equation*}
  Q^{\ep}(s) := \sup_{k \in \Z_+} (1 + \ep)^k \Gamma_{\otimes}^k(s) \mbox{\quad for all\ } s \in \ell_{\infty}^+(I)%
\end{equation*}
is well-defined and satisfies%
\begin{equation*}
  \Gamma_{\otimes}(Q^{\ep}(s)) \leq \frac{1}{1 + \ep}Q^{\ep}(s) \mbox{\quad for all\ } s \in \ell_{\infty}^+(I).
\end{equation*}
In particular, this holds for $s = {\bf 1}$. Since $s_0 := Q^{\ep}({\bf 1}) \geq {\bf 1}$, we have $s_0 \in \inner(\ell_{\infty}^+(I))$. 

\ref{itm:join-morphism-eISS-criterion-2} $\Rightarrow$ \ref{itm:join-morphism-eISS-criterion-3}. By monotonicity and homogeneity of degree one of $\Gamma_{\otimes}$, we have%
\begin{equation*}
  \Gamma_{\otimes}^k(s_0) \leq \lambda^k s_0 \mbox{\quad for all\ } k \geq 1.%
\end{equation*}
There exists $n \in \N$ such that any $s \in \ell_{\infty}^+(I)$ with $\|s\|_{\ell_{\infty}} = 1$ satisfies $s \leq ns_0$. Hence,%
\begin{equation*}
  \Gamma_{\otimes}^k(s) \leq \Gamma_{\otimes}^k(ns_0) = n \Gamma_{\otimes}^k(s_0) \leq n \lambda^k s_0 \mbox{\quad for all\ } k \geq 1,\ \|s\|_{\ell_{\infty}} = 1.%
\end{equation*}
This implies $r(\Gamma_{\otimes}) \leq \lambda < 1$, which completes the proof. \qed%
\end{proof}

\begin{remark}\label{rem:Paths-of-strict-decay} 
The special form of the operator $\Gamma_\otimes$ is used in Proposition~\ref{prop:join-morphism-eISS-criterion} only for the proof of 
the implication \ref{itm:join-morphism-eISS-criterion-1} $\Rightarrow$ \ref{itm:join-morphism-eISS-criterion-2}. The remaining implications are valid for considerably more general types of operators. Note that if $s_0$ is as in item \ref{itm:join-morphism-eISS-criterion-2}, then $ts_0$ also satisfies all conditions in item \ref{itm:join-morphism-eISS-criterion-2}, for any $t>0$ and thus we can construct a path of strict decay $t\mapsto t s_0$ for the gain operator $\Gamma_\otimes$, which is an important ingredient for the proof of the Lyapunov-based ISS small-gain theorem, see \cite{DRW06b}.
\end{remark}

}

\begin{acknowledgements}
A.~Mironchenko is supported by the German Research Foundation (DFG) via the grant MI 1886/2-1.
%within the project ``Robust stabilization of interconnected infinite-dimensional systems with boundary couplings'' (grant MI 1886/2-1). 
C.~Kawan is supported by the DFG through the grant ZA 873/4-1.
\end{acknowledgements}

\bibliographystyle{abbrv}
\bibliography{Mir_LitList_NoMir,MyPublications}

\begin{appendices}
%\renewcommand{\thesection}{Appendix \Alph{section}}% 
% the comment \thesection shouldn't be changed since it is used by the dcoumentclass to enumerate theorems, etc.

\setcounter{section}{0}% Equivalent to "letter A"

\section{Exponential ISS of linear and homogeneous of degree one subadditive discrete-time systems}
\label{sec:Exponential ISS of discrete-time systems}

Here we characterize  exponential ISS for homogeneous of degree one and subadditive operators.

\begin{proposition}\label{prop:eISS-criterion-homogeneous-systems}
Let $(X,X^+)$ be an ordered Banach space with a generating and normal cone $X^+$. Consider system \eqref{eq_monotone_system} and assume that the operator $A:X^+\to X^+$ is monotone and satisfies the following properties:%
\begin{enumerate}[label = (\roman*)]
\item\label{itm:eISS-criterion-homog-1} $A$ is homogeneous of degree one, i.e., $A(rx) = rA(x)$ for all $x\in X^+$ and $r \geq 0$.%
\item\label{itm:eISS-criterion-homog-2} $A$ is subadditive, i.e., $A(x + y) \leq A(x) + A(y)$ for all $x,y\in X^+$.%
\item\label{itm:eISS-criterion-homog-3}  $A$ satisfies%
\begin{equation*}
  C := \sup_{x\in X^+ ,\; \|x\|_X=1}\|A(x)\|_X < \infty.%
\end{equation*}
\end{enumerate}
{Then $A$ is globally Lipschitz continuous and the following statements are equivalent:
\begin{enumerate}[label = (\alph*)]
	\item\label{itm:eISS-criterion-equiv-1} System \eqref{eq_monotone_system} is eISS. 

	\item\label{itm:eISS-criterion-equiv-2}  It holds that 
	\begin{equation}\label{eq_def_spectralradius}
  r(A) := \lim_{n \rightarrow \infty} \sup_{x \in X^+,\; \|x\|_X = 1}\|A^n(x)\|_X^{1/n} < 1.%
\end{equation}
\item\label{itm:eISS-criterion-equiv-3} There is a globally Lipschitz $V:X^+\to\R_+$ and $L_1,L_2,\psi>0$, $\eta>1$, such that 
\begin{equation}
\label{eq:Sandwich}
  L_1\|x\|_X \leq V(x) \leq L_2 \|x\|_X,\quad x \in X^+,
\end{equation}
%
%\sideremark{
	%\jg{Sollten wir hier nicht spezifizieren, aus welchem Raum $u$ stammt?}
%}
%
and for any $u \in \ell_{\infty}(\Z_+,X^+)$, and any solution of \eqref{eq_monotone_system} it holds that 
\begin{equation}
\label{eq:Dissipative-inequality}
  V(x(k+1)) \leq \eta^{-1} V(x(k)) + \psi \|u\|_{\infty} \mbox{\quad for all\ } k \geq 0.%
\end{equation}
	%\item \eqref{eq_monotone_system} has the 0-UGAS property; \mir{To be defined}
\end{enumerate}
}
%Furthermore,
%
 %\eqref{eq_monotone_system} has the eISS property if and only if%
%}
%\begin{equation}\label{eq_def_spectralradius}
  %r(A) := \lim_{n \rightarrow \infty} \sup_{x \in X^+,\; \|x\|_X = 1}\|A^n(x)\|_X^{1/n} < 1.%
%\end{equation}
\end{proposition}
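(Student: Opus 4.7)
First I would establish the global Lipschitz continuity of $A$. Given $x_1, x_2 \in X^+$, the generating-cone decomposition~\eqref{eq_bounded_decomposition} yields $v, w \in X^+$ with $x_1 - x_2 = v - w$ and $\|v\|_X, \|w\|_X \leq M\|x_1-x_2\|_X$. Since $x_1 + w = x_2 + v$, monotonicity and subadditivity of $A$ give $A(x_1) \leq A(x_1+w) = A(x_2+v) \leq A(x_2) + A(v)$, so $A(x_1) - A(x_2) \leq A(v)$, and symmetrically $A(x_2) - A(x_1) \leq A(w)$. Homogeneity together with assumption (iii) gives $\|A(v)\|_X, \|A(w)\|_X \leq CM\|x_1 - x_2\|_X$, and normality of $X^+$ then converts the order sandwich $-A(w) \leq A(x_1)-A(x_2) \leq A(v)$ into the desired norm bound on $\|A(x_1)-A(x_2)\|_X$.

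For the equivalence I would argue \ref{itm:eISS-criterion-equiv-1} $\Rightarrow$ \ref{itm:eISS-criterion-equiv-2} $\Rightarrow$ \ref{itm:eISS-criterion-equiv-3} $\Rightarrow$ \ref{itm:eISS-criterion-equiv-1}. The first implication is immediate: applying the eISS estimate to the zero input and the solution $x(k) := A^k(x_0)$ gives $\|A^k(x_0)\|_X \leq M\|x_0\|_X a^k$, whence $r(A) \leq a < 1$.

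For \ref{itm:eISS-criterion-equiv-2} $\Rightarrow$ \ref{itm:eISS-criterion-equiv-3} I would first replace the norm on $X$ by an equivalent monotone one (possible by normality of $X^+$) and note that the quantities $a_n := \sup\{\|A^n(x)\|_X : x \in X^+,\ \|x\|_X = 1\}$ are submultiplicative, $a_{n+m} \leq a_n a_m$, by the homogeneity identity $A^n(A^m(x)) = \|A^m(x)\|_X \cdot A^n(A^m(x)/\|A^m(x)\|_X)$ whenever $A^m(x) \neq 0$. Fekete's lemma gives $a_n^{1/n} \to r(A)$, so for some fixed $\sigma \in (r(A),1)$ there is $K \geq 1$ with $a_n \leq K\sigma^n$ for all $n$. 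Choose $\lambda \in (1,1/\sigma)$ and define
\[
  V(x) := \sup_{k \in \Z_+} \lambda^k \|A^k(x)\|_X, \quad x \in X^+.
\]
Then $\|x\|_X \leq V(x) \leq K\|x\|_X$ (the $k=0$ term on one side, and $\lambda^k a_k \leq K(\lambda\sigma)^k \leq K$ on the other), $V$ is monotone (from monotonicity of $A$ and the norm), and subadditive: the induction $A^n(x+y) \leq A^n(x) + A^n(y)$ combined with subadditivity of the norm gives $V(x+y) \leq V(x)+V(y)$. A reindexing of the supremum yields the decay $V(A(x)) \leq \lambda^{-1} V(x)$, and putting everything together for a solution $x(\cdot)$ of~\eqref{eq_monotone_system},
\[
  V(x(k+1)) \leq V(A(x(k)) + u(k)) \leq \lambda^{-1} V(x(k)) + K\|u\|_\infty,
\]
which is~\eqref{eq:Dissipative-inequality} with $\eta := \lambda > 1$ and $\psi := K$. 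Global Lipschitzness of $V$ follows from monotonicity, subadditivity, and $V \leq K\|\cdot\|_X$ by the same decomposition argument used for $A$.

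Finally, \ref{itm:eISS-criterion-equiv-3} $\Rightarrow$ \ref{itm:eISS-criterion-equiv-1} is obtained by iterating~\eqref{eq:Dissipative-inequality} to get $V(x(k)) \leq \eta^{-k} V(x(0)) + \tfrac{\psi}{1-\eta^{-1}}\|u\|_\infty$ and combining with~\eqref{eq:Sandwich} to extract~\eqref{eq:eISS-inequalities}. I expect the main obstacle to be the Gelfand-type step establishing $a_n \leq K\sigma^n$ cleanly without linearity (relying only on homogeneity of degree one and Fekete's lemma for submultiplicative sequences), together with the bookkeeping required to pass transparently between the original norm on $X$ and the equivalent monotone norm used throughout the construction of $V$.
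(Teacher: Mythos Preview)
Your proposal is correct and follows essentially the same route as the paper: the Lipschitz argument for $A$ via the generating-cone decomposition and normality, the submultiplicativity/Fekete step for $a_n$, the Lyapunov candidate $V(x)=\sup_k \lambda^k\|A^k(x)\|$ with its sandwich bounds, subadditivity and decay $V(A(x))\le\lambda^{-1}V(x)$, and the standard iteration for \ref{itm:eISS-criterion-equiv-3}~$\Rightarrow$~\ref{itm:eISS-criterion-equiv-1} all match. The only cosmetic difference is that you package the upper bound as $a_n\le K\sigma^n$ for all $n$ (yielding the clean $V\le K\|\cdot\|$), whereas the paper splits into $n\ge N$ with $a_n\le\eta^{-n}$ and $n<N$ with $\|A^n(x)\|\le C^n\|x\|$, taking $\psi=\max_{0\le n<N}(\eta C)^n$; your explicit use of monotonicity of $V$ to pass from $x(k+1)\le A(x(k))+u(k)$ to $V(x(k+1))\le V(A(x(k))+u(k))$ is in fact slightly cleaner than the paper's presentation.
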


\begin{proof}
{
\textbf{$A$ is Lipschitz continuous.}
Pick any $x,y \in X^+$. As $X^+$ is generating, there are $M>0$ (which does not depend on $x,y$) and $a,b\in X^+$ such that $x-y = a-b$ and 
$\|a\|_X\leq M\|x-y\|_X$, $\|b\|_X\leq M\|x-y\|_X$.
Hence, for all $x,y\in X^+$ we have 
\begin{align*}
A(x) - A(y)
 &=  A(x-y + y)  -A(y)  =  A(a-b + y)  -A(y)\\
 &\leq  A(a + y)  -A(y) \leq  A(a) + A(y)   - A(y) = A(a).
\label{eq:A(x)-A(y)-upper}
\end{align*}
Analogously, we obtain for all $x,y\in X^+$ that $A(x) - A(y) \geq - A(b)$. As $X^+$ is normal, due to \cite[Thm.~2.38]{AlT07}, there is $c>0$, depending only on $(X,X^+)$, such that 
\begin{align*}
\|A(x) - & A(y)\|_{X} 
\leq c\max\{\|A(a)\|_{X},\|A(b)\|_{X}\}\\
\leq& c\max\{\|a\|_X\|A(a/\|a\|_X)\|_{X},\|b\|_X\|A(b/\|b\|_X)\|_{X}\}
\leq cCM \|x-y\|_{X}.
%\label{eq:}
\end{align*}
}
\q{\ref{itm:eISS-criterion-equiv-1} $\Rightarrow$ \ref{itm:eISS-criterion-equiv-2}}: If \eqref{eq_monotone_system} is eISS, then for $u\equiv 0$, any $x \in X^+$ and for the solution $x(k+1) = A(x(k))$ of  \eqref{eq_monotone_system}, the inequality \eqref{eq:eISS-inequalities} implies that $\|A^n(x)\|_X\leq Ma^n\|x\|_X$ for all $n\in\Z_+$.
Hence, $\sup_{x \in X^+,\; \|x\|_X = 1}\|A^n(x)\|_X^{1/n} \leq M^{1/n}a \to a$ as $n\to\infty$ and thus $r(A) \leq a<1$.%

\q{\ref{itm:eISS-criterion-equiv-2} $\Rightarrow$ \ref{itm:eISS-criterion-equiv-3}}: 
 From the assumptions \ref{itm:eISS-criterion-homog-1} and \ref{itm:eISS-criterion-homog-3} together it follows that%
\begin{equation}
\label{eq_abound}
  \|A(x)\|_X = \|x\|_X \|A(x/\|x\|_X)\|_X \leq C \|x\|_X \mbox{\quad for all\ } x \in X^+ \setminus \{0\}.%
\end{equation}
Consider the sequence%
\begin{equation*}
  a_n := \sup_{x \in X^+,\; \|x\|_X = 1}\|A^n(x)\|_X,\quad n \in \Z_+.%
\end{equation*}
This sequence is submultiplicative, as for all $m,n\in\Z_+$ it holds that
\begin{align*}
  a_{n+m} &= \sup_{x \in X^+,\; \|x\|_X = 1}\|A^m(A^n(x))\|_X = \sup_{x \in X^+ ,\; \|x\|_X = 1} \|A^n(x)\|_X \Bigl\|A^m\Bigl(\frac{A^n(x)}{\|A^n(x)\|_X}\Bigr)\Bigr\|_X \\
	        &\leq \sup_{x \in X^+ ,\; \|x\|_X = 1} \|A^n(x)\|_X \cdot \sup_{x \in X^+ ,\; \|x\|_X=1} \|A^m(x)\|_X = a_n \cdot a_m.%
\end{align*}
By a submultiplicative version of the Fekete's subadditive lemma, $\lim_{n\to\infty}a_n^{\frac{1}{n}} = \inf_{n\to\infty}a_n^{\frac{1}{n}}\leq a_1<\infty$, and thus the limit in \eqref{eq_def_spectralradius} exists.%

We fix $\eta > 1$ such that $\eta r(A) < 1$ and define a function $V:X^+ \rightarrow \R_+$ by%
\begin{equation*}
  V(x) := \sup_{n \in \Z_+} \eta^n \|A^n(x)\|_X \mbox{\quad for all\ } x \in X^+.%
\end{equation*}
Setting $n:=0$ in the supremum, we see that $\|x\|_X\leq V(x)$ for all $x \in X^+$. Since $r(A) < \eta^{-1}$, there exists $N \in \N$ so that%
\begin{equation*}
  \sup_{x \in X^+ ,\; \|x\|_X = 1}\|A^n(x)\|_X \leq \eta^{-n} \mbox{\quad for all\ } n \geq N.%
\end{equation*}
By homogeneity of degree one of $A$, this implies%
\begin{equation*}
  \eta^n \|A^n(x)\|_X = \|x\|_X \eta^n \|A^n(\tfrac{x}{\|x\|_X})\|_X \leq \|x\|_X \mbox{\quad for all\ } n \geq N,\ x \in X^+ \setminus \{0\}.%
\end{equation*}
By \eqref{eq_abound}, we have $\|A(x)\|_X \leq C\|x\|_X$ for all $x \in X^+$. Due to homogeneity of $A$%
\begin{equation*}
  \|A^n(x)\|_X = \|A^{n-1}(x)\|_X \|A(\frac{A^{n-1}(x)}{\|A^{n-1}(x)\|_X})\|_X \leq C\|A^{n-1}(x)\|_X%
\end{equation*}
for all $x \in X^+$, and by induction $\|A^n(x)\|_X \leq C^n\|x\|_X$ for all $x \in X^+$.%

Since $\eta^0 \|A^0(x)\|_X=\|x\|_X$, with $\psi := \max_{0 \leq n < N} (\eta C)^n$ we have%
\begin{equation}\label{eq:LF-estimate-above}
  V(x) = \sup_{n \in \Z_+} \eta^n \|A^n(x)\|_X = \sup_{0\leq n < N} \eta^n \|A^n(x)\| \leq \psi \|x\|_X.%
\end{equation}
Also observe that%
\begin{equation*}
  V(A(x)) = \sup_{n \in \Z_+} \eta^n \|A^{n+1}(x)\|_X = \eta^{-1} \sup_{n \in \Z_+} \eta^{n+1} \|A^{n+1}(x)\|_X \leq \eta^{-1} V(x).%
\end{equation*}
As $A$ is monotone and subadditive, it holds by induction for all $n\in\N$ that 
\begin{equation*}
  A^n(x+y) = A^{n-1}(A(x+y)) \leq A^{n-1}(A(x)+A(y)) \leq A^n(x) + A^n(y),%
\end{equation*}
that is, $A^n$ are subadditive as well.%

We can assume without loss of generality that the norm $\|\cdot\|_X$ is monotone, i.e., $0 \leq x \leq y$ implies $\|x\|_X \leq \|y\|_X$ for any $x,y\in X^+$. Otherwise, we choose an equivalent norm with this property, and note that eISS in one norm implies eISS  in any other equivalent norm, and that the spectral radius does not depend on the choice of an equivalent norm.%

Together with the subadditivity of $A^n$, $n\in\N$ this implies for all $x,y \in X^+$ that%
\begin{align}\label{eq_c_subadditive}
\begin{split}
  V(x + y) &= \sup_{n \in \Z_+} \eta^n \| A^n(x + y) \|_X 
	\leq \sup_{n \in \Z_+} \eta^n \|A^n(x) + A^n(y)\|_X \\
	&\leq \sup_{n \in \Z_+} \eta^n (\|A^n(x)\|_X + \|A^n(y)\|_X) \leq V(x) + V(y),
\end{split}
\end{align}
and hence $V$ is subadditive as well. Now consider a sequence $x(\cdot)$ in $X^+$ such that%
\begin{equation}
\label{eq:Discrate-time-sys-aux}
  x(k+1) = A(x(k)) + u(k) \mbox{\quad for all\ } k \in \Z_+.%
\end{equation}
It then follows that%
\begin{align*}
  V(x(k+1)) &= V(A(x(k)) + u(k)) \leq V(A(x(k))) + V(u(k)) \leq \frac{1}{\eta} V(x(k)) + V(u(k)).% 
\end{align*}
%\begin{align*}
  %V(x(k+1)) &= V(A(x(k)) + u(k)) \\
	%&\leq V(A(x(k))) + V(u(k)) \leq \eta^{-1} V(x(k)) + V(u(k)).% 
%\end{align*}
By \eqref{eq:LF-estimate-above}, we obtain%
\begin{equation*}
  V(x(k+1)) \leq \eta^{-1} V(x(k)) + \psi \|u\|_{\infty} \mbox{\quad for all\ } k \geq 0.%
\end{equation*}

{
As $V$ is homogeneous of degree one, subadditive and monotone,
 $V$ is Lipschitz continuous using the argumentation in the beginning of the proof. 

\q{\ref{itm:eISS-criterion-equiv-3} $\Rightarrow$ \ref{itm:eISS-criterion-equiv-1}}: This standard argument it omitted.
}
%Inductively, this implies the estimate%
%\sideremark{
	%\ck{
		%Hier kann man viele Zeilen einsparen, indem man das etwas anders aufschreibt.
	%}
	%
	%\mir{
		%Stimmt.
	%}
%}
%\begin{equation*}
  %V(x(k)) \leq \eta^{-k} V(x(0)) + \psi \frac{\eta}{\eta-1} \|u\|_{\infty},%
%\end{equation*}
%and using that $\|\cdot\|_X \leq V(\cdot)$ and \eqref{eq:LF-estimate-above}, we see that 
%\begin{equation*}
  %\|x(k)\|_X \leq \eta^{-k} \psi \|x(0)\|_X + \psi \frac{\eta}{\eta-1} \|u\|_{\infty},%
%\end{equation*}
%which shows that \eqref{eq:Discrate-time-sys-aux} is eISS, and hence \eqref{eq_monotone_system} is eISS by Lemma~\ref{lem_eISS_equality}. 
\qed%
\end{proof}

{

\section{Systems, governed by a max-form gain operator}
\label{sec:Systems, governed by a max-form gain operator}

Here, we study the properties of the operator $\Gamma_\otimes$ and its strong transitive closure. These results strengthen the corresponding results in \cite[Sec.~4]{DMS19a}, and are motivated by them.
We use these results to characterize the MBI and MLIM properties for the operator $\Gamma_{\otimes}$ with linear gains in 
Proposition~\ref{prop:join-morphism-eISS-criterion}. However, the developments of this section are also useful for the construction of paths of strict decay for the nonlinear operator $\Gamma_{\otimes}$, which is essential for nonlinear Lyapunov-based small-gain theorems.

The powers of the operator $\Gamma_\otimes$ have a particularly simple representation:%

\begin{lemma}
\label{lem:Gamma_otimes_formula} 
For any $n\in\N$ and any $x \in \ell_\infty^+(I)$, it holds that
\begin{equation}\label{eq:Potenzen-join-morphism}
  \Gamma^n_\otimes(x) = \Big(\sup_{j_2,\ldots,j_{n+1}\in I}\gamma_{i j_2}\circ \cdots\circ \gamma_{j_{n}j_{n+1}} (x_{j_{n+1}})\Big)_{i\in I}.
\end{equation}
\end{lemma}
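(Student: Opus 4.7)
The plan is to prove the formula by induction on $n \in \N$, with the base case $n=1$ being immediate from the definition~\eqref{eq:Gain-operator-semimax} of $\Gamma_\otimes$. The inductive step will consist of applying the definition of $\Gamma_\otimes$ to $\Gamma_\otimes^n(x)$ and then interchanging an outer supremum with an inner one nested inside a gain function. The only nontrivial ingredient is this interchange, so the bulk of the plan is to justify it carefully and then to combine everything.

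First I would fix $x \in \ell_\infty^+(I)$. By Assumption~\ref{ass_gammamax_welldef}, $\Gamma_\otimes$ maps $\ell_\infty^+(I)$ into itself, so every intermediate vector $\Gamma_\otimes^k(x)$ lies in $\ell_\infty^+(I)$ and all suprema that appear are finite. Assuming the formula holds for $n$, I would compute
\begin{align*}
  \Gamma_\otimes^{n+1}(x)_i
    &= \sup_{j_2 \in I} \gamma_{ij_2}\bigl( \Gamma_\otimes^n(x)_{j_2} \bigr) \\
    &= \sup_{j_2 \in I} \gamma_{ij_2}\!\left( \sup_{j_3,\ldots,j_{n+2} \in I} \gamma_{j_2 j_3} \circ \cdots \circ \gamma_{j_{n+1} j_{n+2}}(x_{j_{n+2}}) \right).
\end{align*}
To conclude, I would pull the inner supremum outside of $\gamma_{ij_2}$ and then merge the two suprema into a single one over $(j_2,\ldots,j_{n+2})$.

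The justification for the pull-out is the following elementary observation: if $f \in \K \cup \{0\}$ and $S \subset \R_+$ is a nonempty bounded set, then $f(\sup S) = \sup f(S)$. The inequality $f(\sup S) \geq \sup f(S)$ follows from monotonicity of $f$. For the reverse inequality, pick $\varepsilon > 0$; by continuity of $f$ at $\sup S$ there exists $\delta > 0$ with $f(\sup S) - f(\sup S - \delta) < \varepsilon$ (taking $\delta$ so that $\sup S - \delta \geq 0$, which is possible if $\sup S > 0$; the case $\sup S = 0$ is trivial). Choosing $s \in S$ with $s > \sup S - \delta$ yields $f(s) > f(\sup S) - \varepsilon$, and letting $\varepsilon \to 0$ gives $\sup f(S) \geq f(\sup S)$.

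Applying this to $f = \gamma_{ij_2}$ and the bounded set $S = \{ \gamma_{j_2 j_3} \circ \cdots \circ \gamma_{j_{n+1} j_{n+2}}(x_{j_{n+2}}) : j_3,\ldots,j_{n+2} \in I \}$, one obtains
\begin{equation*}
  \Gamma_\otimes^{n+1}(x)_i = \sup_{j_2,\ldots,j_{n+2} \in I} \gamma_{ij_2} \circ \gamma_{j_2 j_3} \circ \cdots \circ \gamma_{j_{n+1} j_{n+2}}(x_{j_{n+2}}),
\end{equation*}
which closes the induction. The only potential pitfall is overlooking the finiteness of the suprema; this is handled by Assumption~\ref{ass_gammamax_welldef}, which ensures at each step that the relevant quantities remain bounded and hence that the function $\gamma_{ij_2}$ is evaluated on a bounded set, so the continuity/monotonicity interchange applies.
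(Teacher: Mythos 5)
Your proof is correct and follows essentially the same route as the paper: induction on $n$, applying the definition of $\Gamma_\otimes$ to the inductive hypothesis and interchanging the outer supremum with the gain function $\gamma_{ij_2}$. The paper justifies the interchange with the terse remark ``as $(\gamma_{ij})\subset\Kinf$'', whereas you spell out the underlying fact $f(\sup S)=\sup f(S)$ for $f\in\K\cup\{0\}$ and bounded $S\subset\R_+$ via monotonicity and continuity, and also make explicit that Assumption~\ref{ass_gammamax_welldef} guarantees finiteness of all suprema involved; these are just more detailed versions of the same argument.
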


\begin{proof}
For $n=1$ the claim is clear. Let  the claim hold %for all $k=1,\ldots,n$
 for a certain $n\in\N$. %\textcolor{red}{[We only need it for $k = n$!]}
Then
\begin{eqnarray*}
  \Gamma^{n+1}_\otimes(x) 
	&=& \Gamma_\otimes(\Gamma^{n}_\otimes(x)) 
	%= \big(\sup_{j_2 \in I}\gamma_{ij_2}\big((\Gamma^{n}_\otimes(x))_{j_2} \big)\big)_{i \in I}\\
	= \big(\sup_{j_2 \in I}\gamma_{ij_2} \circ \sup_{j_3,\ldots,j_{n+2}\in I}\gamma_{j_2 j_3}\circ \cdots\circ \gamma_{j_{n+1}j_{n+2}} (x_{j_{n+2}})\big)_{i \in I}.
\end{eqnarray*}
%As all $\gamma_{ij}$ are $\Kinf$-functions, we proceed to
%\begin{eqnarray*}	
  %\Gamma^{n+1}_\otimes(x) 
	%&=& \big(\sup_{j_2,\ldots,j_{n+2}\in I}\gamma_{ij_2} \circ \gamma_{j_2 j_3}\circ \cdots\circ \gamma_{j_{n+1}j_{n+2}} (x_{j_{n+2}})\big)_{i \in I}.
%\end{eqnarray*}
As $(\gamma_{ij})\subset\Kinf$, \quad
$  \Gamma^{n+1}_\otimes(x) 
	= \big(\sup\limits_{j_2,\ldots,j_{n+2}\in I}\vspace{-4mm}\gamma_{ij_2} \circ \gamma_{j_2 j_3}\circ \cdots\circ \gamma_{j_{n+1}j_{n+2}} (x_{j_{n+2}})\big)_{i \in I}.$
	%
%\begin{eqnarray*}	
  %\Gamma^{n+1}_\otimes(x) 
	%&=& \big(\sup_{j_2,\ldots,j_{n+2}\in I}\gamma_{ij_2} \circ \gamma_{j_2 j_3}\circ \cdots\circ \gamma_{j_{n+1}j_{n+2}} (x_{j_{n+2}})\big)_{i \in I},
%\end{eqnarray*}
%which shows the claim by induction.
\qed
\end{proof}

\begin{remark}\label{rem_homogen_spectral_radius}
Let $\Gamma_{\otimes}$ be homogeneous, which is the case if and only if all the gains $\gamma_{ij}$ are linear. Then using monotonicity of $\Gamma_{\otimes}$, and the formula \eqref{eq:Potenzen-join-morphism}, we obtain the following expression for the spectral radius of $\Gamma_{\otimes}$:
%\begin{equation}
%\label{eq:Spectral-radius-join-morphism}
  %r(\Gamma_{\otimes}) = \lim_{n \rightarrow \infty} \Bigl( \sup_{j_1,\ldots,j_{n+1}\in I} \gamma_{j_1j_2} \cdots \gamma_{j_{n}j_{n+1}}\Bigr)^{1/n}.%
%\end{equation}
%Using the definition of the spectral radius and the formula \eqref{eq:Potenzen-join-morphism}, we obtain that
\begin{eqnarray}%\label{eq_def_spectralradius}
\label{eq:Spectral-radius-join-morphism}
  r(\Gamma_{\otimes}) 
	&=& \lim_{n \rightarrow \infty} \sup_{x \in \ell_\infty^+(I),\; \|x\|_{\ell_\infty(I)} = 1}\|\Gamma_{\otimes}^n(x)\|_{\ell_\infty(I)}^{1/n} = \lim_{n \rightarrow \infty} \|\Gamma_{\otimes}^n({\bf 1})\|_{\ell_\infty(I)}^{1/n}\nonumber\\	
       &=& \lim_{n \rightarrow \infty} \Bigl( \sup_{j_1,\ldots,j_{n+1}\in I} \gamma_{j_1j_2} \cdots \gamma_{j_{n}j_{n+1}}\Bigr)^{1/n}.%
\end{eqnarray}
%To see this note that for any $x \in \ell_\infty^+(I)$ it holds that
%\begin{equation}\label{eq:Potenzen-join-morphism}
  %\Gamma^n_\otimes(x) = \Big(\sup_{i,j_2,\ldots,j_{n+1}\in I}\gamma_{i j_2} \cdots \gamma_{j_{n}j_{n+1}} x_{j_{n+1}}\Big)_{i\in I}.
%\end{equation}
%Hence $\|\Gamma^n_\otimes(x)\|_{\ell_\infty(I)} = \sup_{j_1,j_2,\ldots,j_{n+1}\in I}\gamma_{j_1 j_2} \cdots \gamma_{j_{n}j_{n+1}} x_{j_{n+1}}$, and this implies \eqref{eq:Spectral-radius-join-morphism} by the definition of the spectral radius.
\end{remark}

The following lemma can be found in \cite[Lem.~4.1]{DMS19a}. We include a rather simple proof for the sake of completeness.%

\begin{lemma}\label{lem_cycle_contr}
Assume that $\Gamma_\otimes$ satisfies the small-gain condition. Then all cycles built from the gains $\gamma_{ij}$ are contractions. That is,%
\begin{equation*}
  \gamma_{i_1i_2} \circ \cdots \circ \gamma_{i_{k-1}i_k}(r) < r%
\end{equation*}
for all $r>0$ if $i_1,\ldots,i_k$ is an arbitrary path with $i_1 = i_k$.
\end{lemma}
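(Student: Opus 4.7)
The plan is to argue by contradiction: assume there is a cycle $i_1,\dots,i_k$ with $i_1 = i_k$ and some $r>0$ such that $\gamma_{i_1 i_2} \circ \cdots \circ \gamma_{i_{k-1} i_k}(r) \geq r$, and then build a vector $x \in \ell_\infty^+(I) \setminus \{0\}$ satisfying $\Gamma_\otimes(x) \geq x$, which contradicts the small-gain condition~\eqref{eq:SGC}.

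The construction uses the obvious candidates coming from walking the cycle backwards. Set $r_k := r$ and, for $m = 1,\dots,k-1$, define the partial compositions
\begin{equation*}
  r_m := \gamma_{i_m i_{m+1}} \circ \gamma_{i_{m+1} i_{m+2}} \circ \cdots \circ \gamma_{i_{k-1} i_k}(r),
\end{equation*}
so that $r_m = \gamma_{i_m i_{m+1}}(r_{m+1})$ for each $m \in \{1,\dots,k-1\}$ and, by assumption, $r_1 \geq r = r_k$. Since the cycle may revisit vertices, I would define the candidate state by taking the worst case at each visited index: for $j \in I$, let $x_j := \max\{ r_m : 1 \leq m \leq k,\ i_m = j\}$ where the max over the empty set is declared to be $0$. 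Only finitely many coordinates are nonzero, so $x \in \ell_\infty^+(I)$, and $x_{i_1} \geq r_1 \geq r > 0$, hence $x \neq 0$.

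It remains to check $\Gamma_\otimes(x) \geq x$ componentwise. For any index $j$ off the cycle this is trivial since $x_j = 0$. For an on-cycle index $j$, pick $m$ achieving the defining maximum. If $m \leq k-1$, then $j = i_m$ and
\begin{equation*}
  \Gamma_\otimes(x)_j = \sup_{\ell \in I} \gamma_{j\ell}(x_\ell) \geq \gamma_{i_m i_{m+1}}(x_{i_{m+1}}) \geq \gamma_{i_m i_{m+1}}(r_{m+1}) = r_m = x_j.
\end{equation*}
If $m = k$, then $j = i_k = i_1$ and the inequality $r_1 \geq r = r_k$ forces the maximum in the definition of $x_{i_1}$ to be attained also at $m = 1 \in \{1,\dots,k-1\}$, reducing this case to the previous one. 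This yields $\Gamma_\otimes(x) \geq x$ with $x \neq 0$, contradicting \eqref{eq:SGC}, so the assumed cycle inequality cannot hold and the lemma follows.

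The only nontrivial point is handling cycles that revisit vertices; taking the maximum over all visits of each index dissolves this issue, and once that convention is fixed the proof is a straightforward chain of monotonicity estimates. No further structural properties of $\Gamma_\otimes$ beyond its defining formula and the monotonicity/$\mathcal{K}$-property of the gains are needed.
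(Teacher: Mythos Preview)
Your proof is correct and follows essentially the same approach as the paper: argue by contradiction, build a vector from the partial compositions $r_m = \gamma_{i_m i_{m+1}} \circ \cdots \circ \gamma_{i_{k-1}i_k}(r)$ along the cycle, and verify that $\Gamma_\otimes$ applied to this vector dominates it componentwise, violating the small-gain condition. The paper defines its vector $s$ as a \emph{sum} of the $r_m e_{i_m}$ rather than a componentwise maximum, which is the same construction when the cycle is simple; your use of the maximum is a slightly more careful bookkeeping device that makes the verification $\Gamma_\otimes(x)_j \ge x_j$ go through cleanly even when the cycle revisits vertices.
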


\begin{proof}
Assume that the assertion is not satisfied for some cycle:%
\begin{equation*}
  \gamma_{i_1i_2} \circ \cdots \circ \gamma_{i_{k-1}i_k}(r) \geq r,\quad i_1 = i_k,\ r > 0.%
\end{equation*}
Then define the vector%
%\begin{align*}
%  s := re_{i_1} + e_{i_2}\gamma_{i_2i_3} \circ \cdots \circ %\gamma_{i_{k-1}i_k}(r) + e_{i_3}\gamma_{i_3i_4} \circ \cdots \circ %\gamma_{i_{k-1}i_k}(r) + \ldots + e_{i_{k-1}}\gamma_{i_{k-1}i_k}(r).%
%\end{align*}
\begin{align*}
  s &:= re_{i_1} + e_{i_2}\gamma_{i_2i_3} \circ \cdots \circ \gamma_{i_{k-1}i_k}(r) \\
	 &\qquad + e_{i_3}\gamma_{i_3i_4} \circ \cdots \circ \gamma_{i_{k-1}i_k}(r) + \ldots + e_{i_{k-1}}\gamma_{i_{k-1}i_k}(r).%
\end{align*}
We check that $\Gamma_\otimes(s) \geq s$ to obtain a contradiction. Indeed, for any nonzero component of $s$ indexed by $i_{\nu}$ we have (recall that $i_k = i_1$)%
\begin{align*}
  \Gamma_{\otimes,i_{\nu}}(s) &= \sup_{j\in\N}\gamma_{i_{\nu}j}(s_j) \geq \gamma_{i_{\nu}i_{\nu+1}}(s_{i_{\nu+1}}) 
	= \gamma_{i_{\nu}i_{\nu+1}} \circ \cdots \circ \gamma_{i_{k-1}i_k}(r).%
\end{align*}
The last expression equals $s_{i_{\nu}}$ if $\nu \in \{2,\ldots,k-1\}$ and is $\geq r = s_{i_1}$ if $\nu = 1$. This implies $\Gamma_\otimes(s) \geq s$ in contradiction to the small-gain condition.%
\qed
\end{proof}

%\mir{The following proposition we do not use in this paper, but we use it in the Lyapunov-based small-gain theorems, and it seems to me that it fits this paper better.}

The implication shown in Lemma~\ref{lem_cycle_contr} cannot be reversed in general. However, if the operator $\Gamma$ is block-diagonal with finite-dimensional components, then the implication can be ``upgraded'' to the following criterion.%

\begin{proposition}
\label{prop:SGC_block-diagonal-operators} 
%\sideremark{
	%\ck{
		%Brauchen wir diese Proposition?
	%}
	%
	%\mir{
		%In MCSS Paper nicht, aber in TAC Paper schon.
		%Ich würde das hier lassen.
	%}
%}
Assume that the gain matrix $\Gamma =(\gamma_{ij})_{i,j\in\N}$, $\gamma_{ij}\in\Kinf\cup\{0\}$, is a block-diagonal matrix of the form 
$\Gamma = \diag(\Gamma_1, \Gamma_2, \ldots)$, where all $\Gamma_j:\R^{n_j}_+\to \R^{n_j}_+$ and $n_j <\infty$ for all $j\in\N$.
We write for the corresponding gain operators $\Gamma_\otimes = \diag(\Gamma_{1,\otimes}, \Gamma_{2,\otimes}, \ldots)$.
Then the following statements are equivalent:
\begin{itemize}
	\item[(i)]  $\Gamma_\otimes:\ell_\infty^+\to\ell_\infty^+$ satisfies the small-gain condition. 
	\item[(ii)] For all $i\in\N$, the operator $\Gamma_{i,\otimes}$ satisfies the small-gain condition.
	\item[(iii)] All cycles built from the gains of $\Gamma$ are contractions (as defined in Lemma~\ref{lem_cycle_contr}).
	\item[(iv)] All cycles built from the gains of $\Gamma_i$, $i\in\N$, are contractions.
	%\item[(iv)]  All $\Gamma_i$, $i\in\N$ satisfy the cyclic small-gain condition.
\end{itemize}
%In particular, if all $\Gamma_i$ are finite-dimensional operators, then $\Gamma$ satisfies the small-gain condition if and only if $\Gamma$ satisfies the cyclic small-gain condition.
\end{proposition}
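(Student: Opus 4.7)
The plan is to exploit the block-diagonal structure of $\Gamma$ to reduce (i) and (iii) to their componentwise counterparts, and then bridge (ii) and (iv) via the classical finite-dimensional equivalence between the small-gain condition and the cyclic small-gain condition.

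First I would establish (i) $\Iff$ (ii). Writing any $x \in \ell_\infty^+$ in the form $x = (x^j)_{j\in\N}$ with $x^j \in \R^{n_j}_+$, the block-diagonal form of $\Gamma$ gives $\Gamma_\otimes(x) = \bigl(\Gamma_{j,\otimes}(x^j)\bigr)_{j\in\N}$. Hence the pointwise inequality $\Gamma_\otimes(x) \geq x$ is equivalent to $\Gamma_{j,\otimes}(x^j) \geq x^j$ for every $j$. A failure of (ii) for some $\Gamma_{j,\otimes}$ with witness $x^j \neq 0$ therefore lifts to a failure of (i) (place $x^j$ in block $j$ and zeros elsewhere); conversely, a failure of (i) localizes to at least one block whose $x^j$ is nonzero, violating (ii).

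Next, for (iii) $\Iff$ (iv), I would split the cycles in $\Gamma$ according to whether they cross block boundaries. Any cycle $i_1,\ldots,i_k$ with $i_1 = i_k$ that mixes indices from two different blocks must contain a consecutive pair $(i_\nu,i_{\nu+1})$ with $\gamma_{i_\nu i_{\nu+1}} = 0$, so the full composition vanishes identically and the strict contraction requirement holds trivially. The remaining cycles are precisely those whose indices all lie within a single block $\Gamma_j$, and for these the contraction condition in (iii) coincides verbatim with the contraction condition in (iv).

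Finally, for (ii) $\Iff$ (iv), I would invoke the classical equivalence of the small-gain condition $A(x) \not\geq x$ with the cyclic small-gain condition for finite-dimensional max-type operators, applied separately to each block $\Gamma_{j,\otimes}$ (cf.\ Remark~\ref{rem:Small-gain conditions and cycles} and \cite{Rue10,DRW10}). The direction SGC $\Rightarrow$ cyclic SGC is already contained in Lemma~\ref{lem_cycle_contr}; the converse is the genuinely finite-dimensional input and the only nontrivial ingredient in the whole argument. The main obstacle is thus not analytic but organizational: one must carefully juggle the global index set $\N$ on which $\Gamma_\otimes$ acts with the local coordinates of each finite block, and check that cycles crossing block boundaries (automatically trivialized by zero gains) do not distort the correspondence between the global and blockwise cyclic conditions.
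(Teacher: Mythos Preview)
Your proposal is correct and follows essentially the same route as the paper: the block-diagonal decomposition reduces (i)$\Leftrightarrow$(ii) to a componentwise argument, (iii)$\Leftrightarrow$(iv) to the observation that any cycle not confined to a single block contains a zero gain, and (ii)$\Leftrightarrow$(iv) to the classical finite-dimensional equivalence (the paper cites \cite[p.~108]{DRW07}). Your exposition is, if anything, slightly more explicit about why cross-block cycles are trivially contractions.
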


\begin{proof}
(i) $\Rightarrow$ (ii). Assume that $\Gamma_\otimes$ satisfies the small-gain condition.
Pick any $i\in\N$ and consider the vector $s = (s_1,s_2,\ldots) \in\ell_\infty^+$ with $s_j=0$ for $j\neq i$ and $s_i\neq 0$.

For this operator, we have $(\Gamma_\otimes s)_i = \Gamma_{i,\otimes} s_i$, and $(\Gamma_\otimes s)_j=0$ for $j\neq i$.
As $\Gamma_\otimes s \not\geq s$ for all $s_i\neq 0$, it follows  that $\Gamma_{i,\otimes} s_i \not\geq s_i$, for all $s_i\neq 0$.
As $i\in\N$ has been chosen arbitrarily, the claim follows.

(ii) $\Rightarrow$ (i). Pick any $s\in\ell_\infty^+\backslash\{0\}$. We can write it as $s=(s_1,s_2,\ldots)$.
Pick any $i\in\N$ so that $s_i\neq 0$. As $s\neq 0$, such $i$ exists and it holds that $\Gamma_i s_i \not\geq s_i$.

As $\Gamma_\otimes s = (\Gamma_{1,\otimes}s_1,\Gamma_{2,\otimes} s_2, \ldots)$ and by definition of the order in $\ell_\infty$ 
it holds that $\Gamma_\otimes s \geq s$ $\Iff$ $\Gamma_{i,\otimes} s_i \geq s_i$ for all $i$, it follows that $\Gamma_\otimes s \not\geq s$.

(ii) $\Iff$ (iv). Well-known, see, e.g., \cite[p.~108]{DRW07}.

(iii) $\Iff$ (iv). This follows by noting that all the nonzero cycles of $\Gamma_\otimes$ are cycles of $\Gamma_{i,\otimes}$ for a certain $i\in\N$.
\qed
\end{proof}

We will use the following lemma which is interesting in itself, since it does not require the gains to be linear. This is a strenthening of \cite[Lem.~4.3]{DMS19a}, and is ultimately useful for a construction of so-called paths of strict decay (a.k.a.~$\Omega$-paths) in Lyapunov-based small-gain theorems, see \cite{DMS19a}.%

\begin{lemma}
\label{lem_Q}
Assume that $\Gamma_{\otimes}:\ell_{\infty}^+(I) \rightarrow \ell_{\infty}^+(I)$, defined as in \eqref{eq:Gain-operator-semimax}, is well-defined, continuous and satisfies the robust small-gain condition. Then the operator%
\begin{equation*}
Q:\ell_{\infty}^+(I) \to \ell_{\infty}^+(I),\quad Q(s) := \sup_{k \in \Z_+} \Gamma_{\otimes}^k(s) \mbox{\quad for all\ } s \in \ell_{\infty}^+(I),%
\end{equation*}
where the supremum is taken componentwise, is well-defined and satisfies%
\begin{equation}
\label{eq:upper estimate on Q}
  s \leq Q(s) \leq \omega^{-1}(\|s\|_{\ell_{\infty}}){\bf 1} \mbox{\quad for all\ } s \in \ell_{\infty}^+(I),
\end{equation}
where $\omega\in\Kinf$, $\omega<\id$ stems from the robust small-gain condition.
In particular,
\begin{equation}
\label{eq:Sandwich-estimates-Q}
  \|s\|_{\ell_{\infty}} \leq\|Q(s)\|_{\ell_{\infty}} \leq \omega^{-1}(\|s\|_{\ell_{\infty}}) \mbox{\quad for all\ } s \in \ell_{\infty}^+(I),
\end{equation}
Furthermore,
\begin{equation}
\label{eq_Gamma_Q_ineq}
  \Gamma_{\otimes}(Q(s)) =Q(\Gamma_{\otimes}(s)) \leq Q(s) \mbox{\quad for all\ } s \in \ell_{\infty}^+(I).
\end{equation}
\end{lemma}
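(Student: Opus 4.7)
The plan is to first establish the inequality $Q(s) \le \omega^{-1}(\|s\|_{\ell_\infty}){\bf 1}$, which simultaneously proves that $Q$ is well-defined. Lemma~\ref{lem:Gamma_otimes_formula} expresses $\Gamma_{\otimes}^k(s)_i$ as a supremum over length-$k$ paths $(i, j_1, \ldots, j_k)$ of the ``path weight'' $W(i, j_1, \ldots, j_k) := \gamma_{i j_1}\circ \cdots \circ \gamma_{j_{k-1} j_k}(s_{j_k})$, so it suffices to bound every path weight by $\omega^{-1}(\|s\|_{\ell_\infty})$. Once this is done, combining with the trivial lower bound $Q(s) \geq s$ (coming from $k=0$) yields \eqref{eq:upper estimate on Q}, and \eqref{eq:Sandwich-estimates-Q} follows immediately by taking norms.

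For a \emph{simple} path $p = (i_0, \ldots, i_k)$ with $k \geq 1$ and pairwise distinct indices, I would construct a test vector $x_p \in \ell_{\infty}^+(I)$ supported on $p$ by setting $x_p(i_\ell) := \gamma_{i_\ell i_{\ell+1}} \circ \cdots \circ \gamma_{i_{k-1} i_k}(s_{i_k})$ for $\ell = 0, \ldots, k-1$, $x_p(i_k) := s_{i_k}$, and $x_p(j) := 0$ elsewhere; in particular $x_p(i_0) = W(p)$. By the sup-form of $\Gamma_{\otimes}$, a direct calculation gives $\Gamma_{\otimes}(x_p)_{i_\ell} \geq x_p(i_\ell)$ for $\ell = 0, \ldots, k-1$. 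I then apply the robust small-gain condition to the operator $A_{i_k, i_0}(x_p) = \Gamma_{\otimes}(x_p) + \omega(W(p)) e_{i_k}$: there must exist a coordinate $m$ with $A_{i_k, i_0}(x_p)_m < x_p(m)$. Ruling out $m$ outside the path (where $x_p$ vanishes), $m \in \{i_1, \ldots, i_{k-1}\}$ (contradicting $\Gamma_{\otimes}(x_p)_{i_\ell} \geq x_p(i_\ell)$), and $m = i_0$ (same contradiction, since $i_0 \neq i_k$) forces $m = i_k$, which gives $\omega(W(p)) < s_{i_k} \leq \|s\|_{\ell_\infty}$ and hence $W(p) < \omega^{-1}(\|s\|_{\ell_\infty})$.

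For a general (non-simple) path, I would iteratively remove cycles: whenever $i_a = i_b$ with $a < b$, replace the subpath $i_a, \ldots, i_b$ by the single vertex $i_a$. Lemma~\ref{lem_cycle_contr} is applicable since the robust small-gain condition implies the ordinary small-gain condition, so each removed cycle is a strict contraction; because the remaining gain compositions are monotone, each reduction weakly increases the overall weight. After finitely many steps the path becomes simple and its weight is bounded by $\omega^{-1}(\|s\|_{\ell_\infty})$, hence so is the weight of the original path. The degenerate cases $k = 0$ and $s_{i_k} = 0$ are handled trivially.

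Finally, the identity $\Gamma_{\otimes}(Q(s)) = Q(\Gamma_{\otimes}(s))$ requires swapping the two suprema in $\sup_j \gamma_{ij}(\sup_k \Gamma_{\otimes}^k(s)_j)$. Writing $\sup_k \Gamma_{\otimes}^k(s)_j$ as the limit of the increasing sequence $\sup_{k \le K} \Gamma_{\otimes}^k(s)_j$ and using continuity and monotonicity of $\gamma_{ij} \in \K \cup \{0\}$, one obtains $\gamma_{ij}(\sup_k \Gamma_{\otimes}^k(s)_j) = \sup_k \gamma_{ij}(\Gamma_{\otimes}^k(s)_j)$; the outer sup-swap is then routine, yielding $\Gamma_{\otimes}(Q(s))_i = \sup_{k \geq 1} \Gamma_{\otimes}^k(s)_i = Q(\Gamma_{\otimes}(s))_i$, with $Q(\Gamma_\otimes(s)) \leq Q(s)$ being immediate. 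The main obstacle will be the case analysis in the simple-path step (ensuring that none of the candidate witnesses $m$ other than $i_k$ is compatible with the structural inequalities forced on $x_p$), together with careful bookkeeping in the cycle-removal reduction so that the arguments fed into the contracted cycles remain positive and Lemma~\ref{lem_cycle_contr} is genuinely invoked.
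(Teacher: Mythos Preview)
Your argument is correct, but the route differs from the paper's in the key step (the upper bound $Q(s)\le \omega^{-1}(\|s\|_{\ell_\infty}){\bf 1}$). You split into simple and non-simple paths: for a simple path you build a test vector $x_p$ supported on the path and apply the robust small-gain condition directly to $A_{i_k,i_0}(x_p)$, forcing the witness to sit at the terminal index $i_k$; for general paths you first strip cycles using Lemma~\ref{lem_cycle_contr}. The paper instead avoids this case distinction altogether: given \emph{any} path $i\to j_1\to\cdots\to j_k\to j$ violating the bound, it defines a modified gain family $\tilde{\gamma}_{lk}=\gamma_{lk}+\delta_{jl}\delta_{ik}\omega$, observes that the associated operator is dominated by $\Gamma_{ji}$ and hence still satisfies the small-gain condition, and then applies Lemma~\ref{lem_cycle_contr} to the single closed walk $j\to i\to j_1\to\cdots\to j_k\to j$ in this modified system. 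The contraction of that cycle, together with $\tilde{\gamma}_{ji}\ge\omega$, immediately yields the contradiction $s_j>\|s\|_{\ell_\infty}$. In effect, the paper manufactures the missing ``closing edge'' with weight $\omega$ and lets the cycle lemma do all the work in one stroke, whereas you reproduce the mechanism behind Lemma~\ref{lem_cycle_contr} (the test-vector construction) for open paths and handle repetitions separately. Your approach is slightly more hands-on and makes the role of simplicity explicit; the paper's is shorter and treats simple and non-simple paths uniformly. The remaining parts (the sandwich estimate and the identity $\Gamma_\otimes(Q(s))=Q(\Gamma_\otimes(s))\le Q(s)$) coincide with the paper's argument.
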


\begin{proof}
Considering $k=0$ in the definition of $Q$, it is clear that $Q(s) \geq s$ for all $s\in \ell_\infty^+(I)$.

Now assume that either $Q$ is not well-defined, or $Q(s) > \omega^{-1}(\|s\|_{\ell_{\infty}}){\bf 1}$ for a certain $s \in \ell_\infty^+(I)$.
In any case, this implies that there are $s \in \ell_\infty^+(I)$ and $i \in I$ such that $\sup_{k \in \Z_+} \big[\Gamma_{\otimes}^k(s)\big]_i > \omega^{-1}(\|s\|_{\ell_{\infty}})$.

By formula \eqref{eq:Potenzen-join-morphism}, we obtain that there exist $k \in \N$, indices $i,j \in \N$ and a path $j_1,\ldots,j_k$ such that%
\begin{equation}\label{eq_con_ass}
  \gamma_{ij_1} \circ \gamma_{j_1j_2} \circ \cdots \circ \gamma_{j_kj}(s_j) \geq \omega^{-1}(\|s\|_{\ell_{\infty}}).%
\end{equation}
For the given $i,j$, we define the operator%
\begin{equation*}
  \tilde{\Gamma}_{ji}(s)_l := \Bigl(\sup_{k\in\N}\left[\gamma_{lk}(s_k) + \delta_{jl}\delta_{ik} \omega(s_k)\right]\Bigr)_{l\in\N} \mbox{\quad for all\ } s \in \ell_{\infty}^+(I),%
\end{equation*}
where $\delta_{xy}$ is the Kronecker delta, and observe that%
\begin{align*}
  \tilde{\Gamma}_{ji}(s) &\leq \Bigl(\sup_{k\in\N}\gamma_{lk}(s_k) + \sup_{k\in\N} \delta_{jl}\delta_{ik} \omega(s_k)\Bigr)_{l\in\N} \\
	&= \Bigl(\sup_{k\in\N}\gamma_{lk}(s_k) + \delta_{jl} \omega(s_i)\Bigr)_{l\in\N} = \Gamma_{\otimes}(s) + \omega(s_i) e_j = \Gamma_{ji}(s).%
\end{align*}
Since $\Gamma_{ji}$ satisfies the small-gain condition by assumption, then also $\tilde{\Gamma}_{ji}$ satisfies the small-gain condition $\tilde{\Gamma}_{ji}(s) \not\geq s$ for all $s \in \ell_{\infty}^+(I) \setminus \{0\}$. By Lemma~\ref{lem_cycle_contr},
%\cite[Lem.~4.1]{DMS19a}, 
 all cycles built from the gains%
\begin{equation*}
  \tilde{\gamma}_{lk}(r) := \gamma_{lk}(r) + \delta_{jl}\delta_{ik} \omega(r),\quad (l,k) \in \N^2%
\end{equation*}
are contractions. In particular,%
\begin{equation*}
  \tilde{\gamma}_{ji} \circ \tilde{\gamma}_{ij_1} \circ \tilde{\gamma}_{j_1j_2} \circ \cdots \circ \tilde{\gamma}_{j_kj}(s_j) < s_j.%
\end{equation*}
With \eqref{eq_con_ass}, we thus obtain%
\begin{align*}
  s_j &> \tilde{\gamma}_{ji} \circ \tilde{\gamma}_{ij_1} \circ \tilde{\gamma}_{j_1j_2} \circ \cdots \circ \tilde{\gamma}_{j_kj}(s_j) \geq \tilde{\gamma}_{ji} \circ \gamma_{ij_1} \circ \gamma_{j_1j_2} \circ \cdots \circ \gamma_{j_kj}(s_j) \\
			&= ( \gamma_{ji} + \omega ) \circ \gamma_{ij_1} \circ \gamma_{j_1j_2} \circ \cdots \circ \gamma_{j_kj}(s_j) \geq \|s\|_{\ell_{\infty}} \geq s_j,%
\end{align*}
a contradiction. This shows \eqref{eq:upper estimate on Q}.

The sandwich estimate \eqref{eq:Sandwich-estimates-Q} follows from \eqref{eq:upper estimate on Q} by monotonicity of the norm in $\ell_\infty(I)$.%

Let us show \eqref{eq_Gamma_Q_ineq}. For every $i\in\N$, we have%
\begin{align*}
  [\Gamma_{\otimes}(Q(s))]_i &= \sup_{j\in\N} \gamma_{ij}( Q(s)_j ) = \sup_{j\in\N} \gamma_{ij}( \sup_{k\in\Z_+} \Gamma_{\otimes}^k(s)_j ) \\
	               &= \sup_{j\in\N}\sup_{k\in\Z_+} \gamma_{ij}(\Gamma_{\otimes}^k(s)_j) = \sup_{k\in\Z_+}\sup_{j\in\N} \gamma_{ij}(\Gamma_{\otimes}^k(s)_j) \\
								 &= \sup_{k\in\Z_+}\Gamma_{\otimes}(\Gamma_{\otimes}^k(s))_i = \sup_{k\in\Z_+}\Gamma_{\otimes}^{k+1}(s)_i \leq \sup_{k\in\Z_+} \Gamma_{\otimes}^k(s)_i = Q(s)_i,%
\end{align*}
and hence \eqref{eq_Gamma_Q_ineq} holds. 

Finally, for any $s \in \ell_\infty^+(I)$ we have

%\begin{eqnarray*}
$\displaystyle Q(\Gamma_{\otimes}(s))  
= \sup_{k \in \Z_+} \Gamma_{\otimes}^{k+1}(s)
= \sup_{k \in \Z_+} \Gamma_{\otimes} (\Gamma_{\otimes}^{k}(s))
= \Gamma_{\otimes} (\sup_{k \in \Z_+} (\Gamma_{\otimes}^{k}(s)))
= \Gamma_{\otimes} (Q(s)).$
%\label{eq:}
%\end{eqnarray*}
\qed%
\end{proof}

\begin{remark}
\label{rem:Kleene-star} 
The operator $Q$ is of fundamental importance in max-algebra, and is sometimes called the strong transitive closure of $\Gamma_\otimes$, or Kleene star, see \cite[Sec.~1.6.2]{But10}, or just the closure of $\Gamma_\otimes$, see \cite[Sec.~1.4]{Rue17}.
%
%\begin{equation}
%\label{eq:Kleene-star}
%Q = \id \vee   \Gamma_{\otimes}\circ Q = \id \vee   Q\circ \Gamma_{\otimes}.
%\end{equation}
\end{remark}

}

\end{appendices}

\end{document}